\DeclareSymbolFont{EulerExtension}{U}{euex}{m}{n}
\DeclareMathSymbol{\euintop}{\mathop} {EulerExtension}{"52}
\DeclareMathSymbol{\euointop}{\mathop} {EulerExtension}{"48}
\def \soc{\operatorname{soc}}
\def \span{\operatorname{span}}
\def \dim{\operatorname{dim}}
\def \Z{\mathbb{Z}}
\def \k{\Bbbk}
\def \A{\mathcal{A}}
\def \B{\mathcal{B}}
\numberwithin{equation}{section}
\newtheorem{theorem}{Theorem}[section]
\newtheorem{lemma}[theorem]{Lemma}
\newtheorem{proposition}[theorem]{Proposition}
\newtheorem{corollary}[theorem]{Corollary}
\newtheorem{definition}[theorem]{Definition}
\newtheorem{example}[theorem]{Example}
\newtheorem{remark}[theorem]{Remark}
\begin{document}
\title[Simple Yetter-Drinfeld modules over infinite-dimensional Taft algebras]{Simple Yetter-Drinfeld modules over infinite-dimensional Taft algebras}
\author[X. Zhen]{Xiangjun Zhen$^\dag$}
\author[G. Liu]{Gongxiang Liu}
\author[J. Yu]{Jing Yu}
\address{Department of Mathematics, Nanjing University, Nanjing 210093, China}
\email{xjzhen@smail.nju.edu.cn}
\address{Department of Mathematics, Nanjing University, Nanjing 210093, China}
\email{gxliu@nju.edu.cn}
\address{School of Mathematical Sciences, University of Science and Technology of China, Hefei 230026, China}
\email{yujing46@ustc.edu.cn}

\thanks{2020 \textit{Mathematics Subject Classification}. 16T05, 16G60 (primary), 16G20, 16G10 (secondary).}
\thanks{Supported by National Key R\&D Program of China 2024YFA1013802 and NSFC 12271243.  }
\thanks{$^\dag$ Corresponding author}
\keywords{Hopf algebras, Infinite-dimensional Taft algebras, Simple Yetter-Drinfeld modules}
\maketitle

\date{}

\begin{abstract}
Let $H$ be an infinite-dimensional Taft algebra over an algebraically closed field $\k$ of characteristic 0. 
We find all the simple Yetter-Drinfeld modules $ V $ over $ H $, and classifies those $ V $ with $ \dim_\k \B(V)< \infty $.
\end{abstract}
\maketitle
\section{Introduction}

Yetter-Drinfeld modules over a bialgebra were introduced by Yetter \cite{Y90} in 1990, where they were called crossed bimodules. For any finite-dimensional Hopf algebra $ H $  over a field $ \k $, Majid \cite{Mj91} identified these modules with the modules over the Drinfeld double $ D(H^{cop}) $ by giving the category equivalences  $ ^H_H \mathcal{YD} \approx _{H^{cop}} \mathcal{YD}^{H^{cop}} \approx _{D(H^{cop})}\mathcal{M} $. The present name comes from \cite{RT}.

Now suppose that $ H $ is a Hopf algebra with bijective antipode. Then $^H_H \mathcal{YD}  $ is a braided monoidal category. This rich structure makes $^H_H \mathcal{YD}$ not only intrinsically interesting, but also plays a crucial role in several areas—such as the construction of new Hopf algebras (e.g., via the Radford biproduct \cite{Ra85}) and the classification of pointed Hopf algebras \cite{AS10}.

A natural and important problem in this setting is the complete classification of the simple objects in $^H_H \mathcal{YD}$. Such a classification is fundamental, for instance, in the classification of pointed Hopf algebras by the lifting method of Andruskiewitsch and Schneider \cite{AS98}.

Many authors have contributed to the classification of simple Yetter–Drinfeld modules. As noted earlier, modules over the Drinfeld double of finite-dimensional Hopf algebras admit straightforward classification. For instance, when $ H $  is a  factorizable Hopf algebra, Reshetikhin and Semenov-Tian-Shansky~\cite{RS88} established that the Drinfeld double $D(H)$ is isomorphic as a Hopf algebra to a twist of $H \otimes H$, facilitating explicit Yetter-Drinfeld module constructions. Schneider~\cite{S01} subsequently proved that $H$ is factorizable \textit{if and only if} $D(H)$ is isomorphic to such a twist. More recently, Liu and Zhu~\cite{LZ19} characterized the structure of simple Yetter-Drinfeld modules over semisimple and cosemisimple quasi-triangular Hopf algebras. When $ \k $ is an algebraically closed field of characteristic zero, Cohen and Westreich~\cite{CW24} provided another characterization.

While the general theory of Yetter–Drinfeld modules is well-developed, the classification of simple Yetter-Drinfeld modules depends heavily on the specific Hopf algebra under consideration. As a result, this problem has  been studied on a case-by-case basis for various specific families of Hopf algebras.


The simplest case occurs when  $ H=\k G $  is the group algebra of a finite group $ G $ over an algebraically closed field $ \k $ of characteristic zero. In this setting, the simple modules over $ D(H) $ are completely described by Dijkgraaf-Pasquier-Roche in \cite{DPR90} and independently by Gould in \cite{G93}. 
Beyond group algebras, the simple Yetter-Drinfeld modules have also been classified for many semisimple Hopf algebras. This includes specific low-dimensional examples, such as the Kac-Paljutkin algebra \cite{HZ07,Shi_2019} and Kashina’s sixteen-dimensional semisimple Hopf algebras \cite{LLK22,ZGH21,ZGH2021,ZY25}, as well as other special cases studied in the literature \cite{S22,S24}.
For non-semisimple Hopf algebras, Yetter-Drinfeld modules have also been classified in important cases. Among pointed Hopf algebras, Taft algebras \cite{C00}  and generalized Taft algebras \cite{MBG21} have been completely calculated. For non-pointed cases, certain specific examples have been studied \cite{GA19,HX2018,X19,X23,X24,XH21,Zheng2024}, though a general classification remains open.

Existing work primarily focuses on finite-dimensional Hopf algebras, where classifications are often achieved by computing representations of the Drinfeld double. For infinite-dimensional Hopf algebras, far fewer results are available, as this method fails. One notable exception is the classification of simple Yetter-Drinfeld modules over the infinite dihedral group~$\mathbb{D}_{\infty}$, established by Zhang \cite{Zhang23}.

The group algebra~$\k \mathbb{D}_{\infty}$ constitutes one of the five classes of affine prime regular Hopf algebras of $\mathrm{GK}$-dimension one, as classified by Ding, Liu, and Wu~\cite{DLW16}. In this article, we study another algebra in this family: the infinite-dimensional Taft algebra. 
Unlike its finite-dimensional counterparts, the infinite-dimensional case exhibits richer representation-theoretic properties and presents distinct challenges due to its non-semisimplicity. Studying the Yetter-Drinfeld modules over this algebra  contributes significantly to the classification of Hopf algebras of GK-dimension one; see  \cite[Question 1.15]{BZ21}.

Our first main result is the following:
\begin{theorem}
Let $ H=H(n,t,\xi) $. Then 
\begin{itemize}
	\item [(1)] All finite-dimensional simple Yetter-Drinfeld modules over $H$ are $ V(ti,j,\lambda) $, where $ i,j \in \Z $ and $ \lambda \in \k $. 
	\item [(2)] The Hopf algebra $ H $ has infinite-dimensional simple Yetter-Drinfeld modules if and only if $ t $ and $ n $ are not coprime. And in this case, all infinite-dimensional simple Yetter-Drinfeld modules over $H$ are $ V(i,j) $, where  $i \in \mathcal{J}$ and $j \in \mathbb{Z}$. 
\end{itemize}
\end{theorem}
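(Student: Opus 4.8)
The plan is to classify the simple objects of ${}^{H}_{H}\mathcal{YD}$ directly, since $H$ is infinite-dimensional and its Drinfeld double is unavailable; the argument must therefore juggle the $H$-module and $H$-comodule structures of a simple $V$ at the same time. The skeleton is the Hopf algebra projection $\pi\colon H\to\k\langle g\rangle$ that kills $x$ and fixes the grouplikes: composing any comodule structure with $\pi\otimes\mathrm{id}$ exhibits $V$ as a graded vector space $V=\bigoplus_{j}V_j$ over the group $\Gamma$ of grouplikes of $H$. Evaluating the Yetter-Drinfeld compatibility on the two algebra generators $g$ and $x$ then shows that $g$ is homogeneous of degree $0$ and invertible on each $V_j$, that $x$ is homogeneous of degree $t$, and that the full coaction on $v\in V_j$ has the form $\rho(v)=\sum_{m\ge 0}g^{\,j-tm}x^{m}\otimes\delta_m(v)$ with $\delta_0=\mathrm{id}$ and $\delta_m\colon V_j\to V_{j-tm}$; coassociativity forces the $\delta_m$ to be twisted $\xi^{t}$-divided powers of the degree-$(-t)$ operator $\delta:=\delta_1$ at every index where the relevant quantum integer is invertible, and to introduce genuinely new operators where it is not. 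Feeding this back into the compatibility for $h=x$ produces an $\mathfrak{sl}_2$-type commutation relation between $\delta$ and $x$ on each $V_j$ together with the $g$-conjugation rule $g\delta_m g^{-1}=\xi^{-m}\delta_m$. Thus a Yetter-Drinfeld module is exactly a graded module over an explicit algebra generated by $g^{\pm1}$, $x$ and the $\delta_m$, and simple Yetter-Drinfeld modules are its simple graded modules; getting this bookkeeping exactly right — which $\delta_m$ survive and which relations they satisfy — is the first technical hurdle.

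Next, on a simple $V$ I would produce a cyclic generator. The quantum integer $[\bar n]_{\xi^{t}}$ vanishes, where $\bar n:=n/\gcd(n,t)$, so $\delta$ is nilpotent; since moreover $g^{n}$ is central in $H$ it acts by a scalar on the simple module $V$, so $g$ acts semisimply, and one obtains a $g$-eigenvector $v$ annihilated by $\delta$. The Yetter-Drinfeld submodule it generates is the linear span of the $x$-orbit $v,xv,x^{2}v,\dots$, on which $g$ is diagonal and every $\delta_m$ acts by a scalar computable from the $\mathfrak{sl}_2$-relation; by simplicity $V$ is this orbit, so the whole problem is to decide when it is finite. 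The decisive point is the wrap-around operator $\delta_{\bar n}$, which preserves the component of $v$ and rescales $g$-eigenvalues by $\xi^{-\bar n}$. When $\gcd(n,t)=1$ this factor is $\xi^{-n}=1$, so $\delta_{\bar n}$ commutes with $g$ and (as one checks from its commutation relation) with $x$, hence is a scalar on the simple module $V$, hence zero because the coaction must be a finite sum; the orbit is then forced to close up, $V$ is finite-dimensional, and one reads off the list $V(ti,j,\lambda)$ — $j$ the degree of $v$, $ti$ the (necessarily $t$-divisible) $g$-eigenvalue datum, $\lambda$ the closing scalar — while no infinite orbit can occur, which is the ``only if'' of part (2). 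When $d:=\gcd(n,t)>1$, $\xi^{-\bar n}$ is a nontrivial $d$-th root of unity, $\delta_{\bar n}$ may be a nonzero (necessarily nilpotent) operator, and this wrap-around datum glues the bi-infinite $x$-orbit into a single simple object with no proper Yetter-Drinfeld submodule; these are precisely the infinite-dimensional modules $V(i,j)$, $i\in\mathcal{J}$, $j\in\mathbb{Z}$.

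It remains to carry out the three verifications that close the classification. First, every $V(ti,j,\lambda)$ and every $V(i,j)$ must be checked to be a genuine object of ${}^{H}_{H}\mathcal{YD}$: the module relations, coassociativity and the compatibility condition all have to hold, and it is precisely here that the set $\mathcal{J}$ acquires its definition, as the set of eigenvalue data for which a consistent, irreducible wrap-around coaction on the bi-infinite orbit exists — empty exactly when $\gcd(n,t)=1$. Second, each of these modules must be shown simple; for the infinite-dimensional $V(i,j)$ this amounts to the absence of a proper Yetter-Drinfeld submodule, which is the subtle step, because an $x$- and $g$-stable subspace of the orbit (a half-orbit, say) need not be stable under the higher operators $\delta_m$, and conversely, so one genuinely has to work with the module and comodule structures together — and in fact it is exactly the nonvanishing of $\delta_{\bar n}$ that destroys every candidate submodule. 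Third, the list must be exhaustive and non-redundant, which follows from the cyclic-generator analysis once one checks that different admissible choices of generator in a given simple module yield isomorphic data. I expect the heart of the difficulty to be the first two verifications in the infinite-dimensional regime: the whole force of the dichotomy $\gcd(n,t)=1$ versus $>1$ is the assertion that a bi-infinite $x$-orbit can be assembled into a simple Yetter-Drinfeld module precisely when the wrap-around root of unity $\xi^{\bar n}$ is nontrivial, and converting that heuristic into a rigorous argument — and simultaneously ruling out any other infinite-dimensional simple objects — is what makes this case substantially harder than the classical finite-dimensional Taft algebra.
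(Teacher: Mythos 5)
Your skeleton --- project the coaction to $\k G$ to get a grading, find a homogeneous generator killed by the ``lowering'' part of the coaction, and identify the simple module with the span of its $x$-orbit --- is essentially the paper's strategy (standard elements, Lemmas \ref{thm:standbasis} and \ref{thm:standbasisinfinite}), recast in dual operator language. But two steps of your plan do not survive contact with the actual structure. First, the infinite-dimensional simple modules are \emph{not} bi-infinite $x$-orbits glued by a wrap-around datum. Every simple $V$, finite- or infinite-dimensional, is the span of the one-sided orbit $\{x^k\cdot v\mid k\ge 0\}$ of a standard element $v$ (an element with $\delta(v)=g^i\otimes v$, which exists because $H$ is pointed, i.e.\ via $\soc(V)$ --- not merely because $\delta_1$ is nilpotent; you need $v$ killed by \emph{all} $\delta_m$, including the divided-power operators at the non-invertible quantum integers). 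On $V(i,j)$ the operator $x$ is injective but not surjective, and the comodule structure carries no free parameter: the coefficients $c^i_j(k,l)=\lambda^i_j(k,l)x^{k-l}g^{i-kt}$ are completely forced by the compatibility recursion once $(i,j)$ is fixed. So $\mathcal{J}$ is not ``the set of eigenvalue data admitting a consistent wrap-around coaction''; it is simply the set of comodule weights $g^i\notin\langle g^t\rangle$, and the dichotomy is governed by the \emph{coweight} of the standard element, not its $g$-eigenvalue (your labelling of $ti$ as ``the $g$-eigenvalue datum'' has these two swapped relative to $V(ti,j,\lambda)$, where $j$ is the eigenvalue and $ti$ the coweight).

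Second, the engine of the whole classification is missing: the closed product formula $\lambda^i_j(k,l)=\binom{k}{l}_{\xi^t}\xi^{-t(k-l)l}\prod_{s=l+1}^{k}\bigl(\xi^{tj}-\xi^{t(s-1)-i}\bigr)$ and the resulting block decomposition of the comatrix (Propositions \ref{prop:explictlambda} and \ref{lem:Atijblocks}). Your argument that $\delta_{\bar n}$ is central, hence scalar, hence zero, is suggestive but does not force the orbit to close: to conclude that $x^{n}\cdot v$ is again a standard element (which is what kills infinite orbits when the coweight is $g^{ti}$, via Corollaries \ref{cor:dimensionleqn} and \ref{cor:coversectijequal0}) one needs $\lambda^{ti}_j(n,l)=0$ for \emph{all} $0\le l<n$, i.e.\ that each tail product $\prod_{s=l+1}^{n}R^{ti}_j(s,0)$ contains the vanishing factor --- and locating that factor amounts to solving $i'\equiv t(k-1-j)\pmod n$, which is solvable precisely when the coweight lies in $\langle g^t\rangle$. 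Dually, the simplicity of $V(i,j)$ for $i\in\mathcal{J}$ rests on $\lambda^i_j(k,0)\neq 0$ for every $k$ (so every simple subcomodule is $\span\{v_0\}$ and every nonzero YD-submodule contains $v_0$); it is not the single operator $\delta_{\bar n}$ but the nonvanishing of the full column of ``maximal drops'' that destroys candidate submodules. Without this computation the proposal establishes neither exhaustiveness (that only coweights in $\langle g^t\rangle$ give finite-dimensional simples, and only those in $\mathcal{J}$ give infinite-dimensional ones) nor the well-definedness and simplicity of the listed modules.
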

See Theorem \ref{thm:YDmodule} for related notions. We also determine the isomorphism relations between these simple modules in Propositions \ref{prop:isocondition} and \ref{prop:isoconditioninfinite}.

Our second main result is the following:
\begin{theorem}
Let $ H=H(n,t,\xi) $. Then 
\begin{itemize}
	\item [(1)] If $ t=0 $, then $ \B(V) $ is infinite-dimensional for every simple Yetter-Drinfeld module $ V $ over $ H $.
	\item [(2)] If $ 1\leq t \leq n-1 $, then the simple Yetter-Drinfeld modules $ V $ over $ H $ for which $ \B(V) $ is finite-dimensional are precisely those listed in Tables 1-6 following Lemma \ref{lem:finiten-nichols-case2}.
\end{itemize}
\end{theorem}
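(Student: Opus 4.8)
The strategy is to reduce to finite-dimensional $V$, make the braiding $c_{V,V}$ of each such module completely explicit from the Yetter--Drinfeld structure of Theorem~\ref{thm:YDmodule}, and then feed the result into the known finiteness criteria for Nichols algebras: the elementary description of rank-one Nichols algebras, Heckenberger's classification of finite-dimensional Nichols algebras of diagonal type, and the classification of finite-dimensional Nichols algebras attached to non-semisimple Yetter--Drinfeld modules over abelian groups (blocks, and a point together with a block). Since $\B(V)=\bigoplus_{m\ge 0}\B^m(V)$ with $\B^1(V)=V$, one has $\dim_\k\B(V)=\infty$ whenever $\dim_\k V=\infty$; by Theorem~\ref{thm:YDmodule} it therefore suffices to analyze the finite-dimensional simple modules $V=V(ti,j,\lambda)$, and by Propositions~\ref{prop:isocondition} and \ref{prop:isoconditioninfinite} we may work with a fixed set of representatives.

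The first substantive step is to compute $c_{V,V}(v\otimes w)=v_{(-1)}\cdot w\otimes v_{(0)}$. Writing out the $H$-action and $H$-coaction on a convenient basis of $V(ti,j,\lambda)$ and decomposing $V$ into generalized $g$-eigenspaces puts $c_{V,V}$ in block-triangular form: the diagonal blocks give a diagonal datum $(q_{rs})$, while the nilpotent off-diagonal contributions come precisely from the part of the coaction lying outside the group algebra $\k\langle g^{\pm 1}\rangle$, i.e.\ from the powers of the skew-primitive $x$. The outcome I expect from this computation is that, up to the equivalences above, $c_{V,V}$ is always either of diagonal type, or a direct sum of a point and one two-dimensional Jordan block, or a single two-dimensional Jordan block.

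When $t=0$ the element $x$ is primitive, which rigidly restricts the Yetter--Drinfeld compatibility; I expect to show that for every simple $V(0,j,\lambda)$ the braiding obtained above fails a necessary condition for finite-dimensionality --- it contains a one-dimensional braided subspace with self-braiding $1$, or a two-dimensional Jordan block with eigenvalue $1$, or a diagonal datum of infinite Cartan type --- so that $\B(V)$ contains a polynomial ring or the Nichols algebra of such a block and is infinite-dimensional; this is part~(1). When $1\le t\le n-1$, $x$ is a genuine (non-primitive) skew-primitive and higher-dimensional simple Yetter--Drinfeld modules occur, so finite-dimensional Nichols algebras become possible. Here I would split according to $\dim_\k V$ and to whether $c_{V,V}$ is diagonalizable: if $\dim_\k V=1$, then $c_{V,V}(v\otimes v)=q(v\otimes v)$ with $q$ an explicit function of $\xi,i,j$, and $\B(V)$ is finite-dimensional iff $q$ is a root of unity $\ne 1$; if $\dim_\k V\ge 2$ and $c_{V,V}$ is diagonalizable, compute the generalized Dynkin diagram of $(q_{rs})$ and match it against Heckenberger's list to decide when it is of finite type; if $c_{V,V}$ is not diagonalizable, identify $V$ as a block or a point-plus-block and apply the corresponding classification. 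Assembling the surviving parameter values and discarding repetitions via the isomorphism relations of Propositions~\ref{prop:isocondition} and \ref{prop:isoconditioninfinite} then yields exactly Tables~1--6, which is part~(2).

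The main obstacle is the bookkeeping in the case $1\le t\le n-1$ with $\dim_\k V\ge 2$: carrying out the braiding computation uniformly across all parameter ranges, deciding in each range whether the braiding is semisimple, and then matching the (generalized) Cartan data against the long classification lists precisely enough to read off the arithmetic conditions on $(i,j,\lambda)$ that define the six tables. A secondary difficulty is making the infiniteness argument in the case $t=0$ genuinely uniform over all $V(0,j,\lambda)$ rather than splitting it into many sub-cases.
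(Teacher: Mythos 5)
Your reduction to finite-dimensional $V$, your treatment of the one-dimensional case, and your argument for $t=0$ (the braiding of $V(0,j,\lambda)$ is the trivial flip, so it contains a vector with self-braiding $1$ and $\B(V)$ is infinite-dimensional) all match the paper. The rest of the plan, however, rests on a structural expectation that is false, and this creates a genuine gap. For $1\le t\le n-1$ and $\dim_\k V\ge 2$ the braiding of $V(ti,j,\lambda)$ is governed by the comatrix $\overline{\A_j^{ti}}$, whose \emph{entire} lower-triangular part is nonzero; the resulting braided vector space is an indecomposable triangular one of dimension up to $N$ (up to $n$ when $\lambda\neq 0$), not ``diagonal, or a single $2$-dimensional Jordan block, or a point plus a $2$-dimensional Jordan block.'' Tables~3--6 contain modules of dimensions $3$ through $6$ of exactly this indecomposable non-diagonal kind whose Nichols algebras \emph{are} finite-dimensional. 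Since every genuine block in the Andruskiewitsch--Angiono--Heckenberger sense (Jordan and super-Jordan planes, etc.) has infinite-dimensional Nichols algebra, your scheme of ``identify $V$ as a block or point-plus-block and apply the classification'' would wrongly declare all of these cases infinite-dimensional; and Heckenberger's rank-$2$ diagonal list cannot be applied to $V$ itself because $V$ is not of diagonal type and has rank up to $N$.

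The missing idea is a reduction that replaces $V$ by an \emph{auxiliary} rank-$2$ diagonal braided vector space. The paper first shows (Lemma~\ref{lem:isoasbraidedspace}) that $V(ti,j,0)$ is isomorphic as a braided vector space to a simple Yetter--Drinfeld module $F(\mathcal{V}_{i,j})$ over the finite-dimensional Taft algebra $A(N,N,0,w^{-1})$, and then invokes the theorem of \cite{MBG21} (built on \cite{AA20}) that $\dim_\k\B(F(\mathcal{V}_{i,j}))<\infty$ if and only if $\dim_\k\B(W_{-i,-j})<\infty$ for a two-dimensional diagonal braided vector space $W_{-i,-j}$ over $C_N$ that is \emph{not} a subspace of $V$ (Lemma~\ref{lem:VijandWij}); only then is Heckenberger's rank-$2$ classification matched against the generalized Dynkin diagram of $W_{-i,-j}$ to produce Tables~1--6. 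Separately, the case $\lambda\neq 0$ is covered by none of the classifications you cite: the paper handles it by a direct symmetrizer computation (Lemma~\ref{lem:finiten-nichols-case3}), showing that the coefficient of $v_i^{\otimes m}$ in $\Delta_{1^m}^{T(V)}(v_i^{\otimes m})$ equals $m!\neq 0$ for every $m$, whence $\B(V(ti,j,\lambda))$ is infinite-dimensional. Without these two ingredients your outline cannot be completed.
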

It is worth noting that $ H $ has infinitely many non-isomorphic finite-dimensional simple Yetter-Drinfeld modules, most of which cannot be realized as Yetter-Drinfeld modules over a generalized Taft algebra (see Remark \ref{rmk:cannotrealize}). Moreover, in general $ H $ also possesses infinite-dimensional simple Yetter-Drinfeld modules.

The paper is organized as follows. In Section \ref{section2}, we recall some basics and notations
of Yetter-Drinfeld modules and Nichols algebras.
We also introduce the infinite-dimensional Taft algebras and provide an initial characterization of its modules and comodules. 
In Section \ref{section3}, we prove that every simple Yetter-Drinfeld module over the infinite-dimensional Taft algebras admits a standard basis and describe its comodule structure with respect to this basis.  In Section \ref{section4}, we present explicit constructions of all simple Yetter-Drinfeld modules and clarify their isomorphism classes.  At last, we characterize all modules $ V $ for which the Nichols algebra $ \B(V) $ is finite-dimensional in Section \ref{section5}.


\section{Preliminaries}\label{section2}

Throughout this paper $\k$ denotes an algebraically closed field of characteristic 0 and all spaces are over $\k$. The tensor product over $\k$ is denoted simply by $\otimes$. The set of natural numbers including zero is denoted by $\mathbb{N}_0$, and the positive integers are denoted by $\mathbb{N}$. For any integer $m \geq 2$, the symbol $R_m$ denotes the set of primitive $m$th roots of unity.
The symbol $ H $ will always denote a Hopf algebra over $ \k $ with comultiplication $ \Delta $, counit $ \varepsilon $, and antipode $ S $. We will use the Sweedler’s sigma notation \cite{Swe69} for coproduct and coaction: $ \Delta(h)=\sum h_{(1)}\otimes h_{(2)} $ for coalgebras and $ \delta(v)=\sum v_{(-1)} \otimes v_{(0)} $ for left comodules. The summation sign is often omitted when no explicit computation is involved. Denote by $ G(H)$ the set of group-like elements of $ H $. For $ g,h \in G(H)  $, the linear space of $ (g,h) $-primitives is:
\[
\mathcal{P}_{g,h} (H)= \left\{x\in H \mid \Delta(x)=x\otimes g+h\otimes x \right\}.
\]
We refer to \cite{MonS,Swe69} for the basics about Hopf algebras.

\subsection{Braided vector spaces and Yetter-Drinfeld modules}
We first present the formal definition of a braided vector space.
\begin{definition}
Let $ V $ be a vector space over $ \k $ and $ c\in \mathrm{Aut}(V \otimes V) $. The pair $ (V, c) $ is called a braided vector space if $ c $ satisfies	
\begin{equation*}
	(c\otimes \mathrm{id})(\mathrm{id}\otimes c )(c\otimes \mathrm{id})=(\mathrm{id}\otimes c )(c\otimes \mathrm{id})(\mathrm{id}\otimes c ). 
\end{equation*}
If $ (V, c) $ is a braided vector space, the automorphism $ c $ is called a braiding. If $ (V, c) $ and $ (W, d) $ are braided vector spaces, a  morphism of braided vector spaces $ f : (V, c) \rightarrow (W, d) $ is a linear
map $ f : V \rightarrow W $ with $ (f\otimes f)c = d(f \otimes f) $.
\end{definition}

\begin{example}
{\normalfont 
Let $ I $ be an index set, and let $ (q_{i,j})_{i,j\in I} $ be a family of non-zero scalars in $ \k $. Let $ V $ be a vector space with basis $ \left\{x_i\right\}_{i\in I} $. We define a linear map $ c:V \otimes V \rightarrow V\otimes V $ by
\begin{center}
	$ c(x_i \otimes x_j)=q_{i,j} x_j \otimes x_i \ \ $  for all $ i,j \in I $.
\end{center}
Then $ (V,c) $ is a braided vector space. One says that $ (V, c) $ is a braided vector space of diagonal type. The matrix $ (q_{i,j})_{i,j\in I} $
is called the braiding matrix of $ (V, c) $ with respect to the basis $\left\{x_i\right\}_{i\in I} $.}	
\end{example}

Braidings of diagonal type constitute the simplest class of braidings. Stefan~\cite{ST04} investigated another interesting class called braidings of triangular type, and subsequently established in~\cite{ST07} that such braidings arise from specific Yetter-Drinfeld modules over pointed Hopf algebras with abelian coradical. We now present its formal definition.

\begin{definition}\emph{(}\cite[Definitions 15]{ST04}\emph{)}
Let $ V $ be a finite-dimensional vector space with a totally ordered basis $ X $ and  $ c\in \mathrm{End}(V \otimes V) $. The endomorphism $ c $ will be called right triangular (with respect to the basis $ X $) if for all $ x,y,z \in X $ with $ z>x $ there exist $ \beta_{x,y} \in \k $ and $ w_{x,y,z} \in V$ such that
\begin{equation*}
	c(x\otimes y) = \beta_{x,y} y \otimes x + \sum_{z>x} w_{x,y,z} \otimes z. 
\end{equation*}
A braided vector space $ (V,c) $ will be called right triangular with respect to the basis $ X $ if $ c $ is right triangular with respect to the basis $ X $.
\end{definition}

We now introduce Yetter-Drinfeld modules. Throughout this article, we consider exclusively the left-left variant of these modules.

\begin{definition}
Let $ H $ be a Hopf algebra, 
a Yetter-Drinfeld module over $ H $ is a vector space provided with
\begin{itemize}
	\item [(1)] a structure of left $ H $-module $ \cdot :H\otimes V\rightarrow V $ and
	\item [(2)] a structure of left $ H $-comodule $ \delta : V\rightarrow H \otimes V  $, such that
	\item [(3)] for all $ h\in H $ and $v\in V$,the following compatibility condition holds:	
	\begin{equation}\label{compatibilitycondition}
		\delta(h\cdot v)=h_{(1)}v_{(-1)}S(h_{(3)})\otimes h_{(2)}\cdot v_{(0)}.
	\end{equation}
\end{itemize}
\end{definition}
The category of Yetter-Drinfeld modules over $ H $ is denoted by $ ^H_H \mathcal{YD} $, with morphisms being linear maps that preserve both the action and the coaction.  When the antipode $S$ is bijective, ${}^H_H\mathcal{YD}$ forms a braided monoidal category. Its braiding is given by
\[
c_{V,W} : V \otimes W \to W \otimes V, \quad v \otimes w \mapsto v_{(-1)} \cdot w \otimes v_{(0)}
\]
for all $V, W \in {}^H_H\mathcal{YD}$, $v \in V$, and $w \in W$.

In particular, $ (V, c_{V,V}) $ is a braided vector space. Thus a Yetter-Drinfeld module is always a braided vector space; however, a braided 
vector space could be realized as a Yetter-Drinfeld module over a Hopf algebra with bijective antipode only when the braiding is rigid (see \cite[Definition 4.2.11]{HS20}). This was first proved by Lyubashenko for symmetries \cite{Ly86}; for the general case see \cite{Sch92}.

We conclude this subsection with the following  lemma.

\begin{lemma}\label{lem:compatiable}
Let $ H $ be a Hopf algebra  and $ (V,\cdot,\delta) $  a left $ H $-module and  left $ H $-comodule. If $X \subseteq H$ generates $H$ as an algebra, then the following are equivalent
\begin{itemize}
	\item [(1)] $ (V,\cdot,\delta)$ is a Yetter-Drinfeld module.
	\item [(2)] For all $ h\in X $ and $v\in V$, the compatibility condition \eqref{compatibilitycondition} holds.
\end{itemize}
\end{lemma}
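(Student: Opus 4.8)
The implication $(1)\Rightarrow(2)$ is trivial, so the content is $(2)\Rightarrow(1)$, and the plan is the standard ``it suffices to check on algebra generators'' argument. Set
\[
H':=\{\, h\in H \mid \delta(h\cdot v)=h_{(1)}v_{(-1)}S(h_{(3)})\otimes h_{(2)}\cdot v_{(0)}\ \text{for all } v\in V \,\},
\]
i.e.\ the set of $h$ for which \eqref{compatibilitycondition} holds. Hypothesis (2) says $X\subseteq H'$, so it is enough to prove that $H'$ is a subalgebra of $H$: then $H'$ contains the subalgebra generated by $X$, which is all of $H$, and that is precisely assertion (1).

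First I would observe that $H'$ is a linear subspace: for fixed $v$, both sides of \eqref{compatibilitycondition} are $\k$-linear in $h$, since $\delta$, the $H$-action, the iterated coproduct $\Delta^{2}=(\Delta\otimes\mathrm{id})\Delta$ and the antipode $S$ are all linear. Next, $1\in H'$: the left-hand side is $\delta(v)=v_{(-1)}\otimes v_{(0)}$, and using $\Delta(1)=1\otimes 1$ and $S(1)=1$ the right-hand side collapses to $1\cdot v_{(-1)}\cdot S(1)\otimes 1\cdot v_{(0)}=v_{(-1)}\otimes v_{(0)}$.

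The one step involving an actual computation is closure under multiplication. Given $g,h\in H'$ and $v\in V$, I would write $(gh)\cdot v=g\cdot(h\cdot v)$, apply the defining identity for $g$ to the vector $w=h\cdot v$, and then expand $w_{(-1)}\otimes w_{(0)}=\delta(h\cdot v)$ using the defining identity for $h$. Together with the module axiom $g_{(2)}\cdot(h_{(2)}\cdot v_{(0)})=(g_{(2)}h_{(2)})\cdot v_{(0)}$ this gives
\[
\delta\bigl((gh)\cdot v\bigr)=g_{(1)}h_{(1)}\,v_{(-1)}\,S(h_{(3)})S(g_{(3)})\otimes (g_{(2)}h_{(2)})\cdot v_{(0)}.
\]
Finally, since $S$ is an anti-algebra morphism, $S(h_{(3)})S(g_{(3)})=S(g_{(3)}h_{(3)})$, and since $\Delta^{2}$ is an algebra morphism, $g_{(1)}h_{(1)}\otimes g_{(2)}h_{(2)}\otimes g_{(3)}h_{(3)}=(gh)_{(1)}\otimes(gh)_{(2)}\otimes(gh)_{(3)}$, so the right-hand side equals $(gh)_{(1)}v_{(-1)}S((gh)_{(3)})\otimes(gh)_{(2)}\cdot v_{(0)}$; hence $gh\in H'$. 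There is really no serious obstacle here: the only care needed is keeping track of the Sweedler indices of $g$ and $h$ and the order reversal under $S$, after which $H'=H$ and the lemma follows.
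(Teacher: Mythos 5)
Your argument is correct and coincides with the paper's own proof: both define the subset of $H$ on which the compatibility condition holds, verify it is a subalgebra containing $1$ via the same Sweedler computation using that $S$ is an anti-algebra morphism and $\Delta^2$ is an algebra morphism, and conclude it equals $H$ since it contains the generating set $X$. No differences worth noting.
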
 
\begin{proof}
The implication (1) $\Rightarrow$ (2) is immediate. For the converse, define	
\begin{center}
	$ A=\{h\in H \ \vert \   	\delta(h\cdot v)=h_{(1)}v_{(-1)}S(h_{(3)})\otimes h_{(2)}\cdot v_{(0)}      $ for all $ v \in V \} $,
\end{center}
clearly $ A $ is a subspace of $ H $ and $ X\subseteq A $. To show $A = H$, it's sufficient to prove that $ A $ is a subalgebra  since $X$ generates $H$ as an algebra.

It's obvious that $ 1\in A $. For any $ h,k\in A  $ and $v\in V$, we have
\begin{eqnarray*}
	\delta((hk)\cdot v)&=&\delta(h\cdot(k\cdot v)) \\
	&=&h_{(1)}(k\cdot v)_{(-1)}S(h_{(3)})\otimes h_{(2)}\cdot (k\cdot v)_{(0)} \\
	&=&h_{(1)}(k_{(1)}v_{(-1)}S(k_{(3)}))S(h_{(3)})\otimes h_{(2)}\cdot (k_{(2)}\cdot v_{(0)} ) \\
	&=&(h_{(1)}k_{(1)})v_{(-1)}(S(k_{(3)})S(h_{(3)}))\otimes (h_{(2)} k_{(2)})\cdot v_{(0)}  \\
	&=&(h_{(1)}k_{(1)})v_{(-1)}(S(h_{(3)}k_{(3)}))\otimes (h_{(2)} k_{(2)})\cdot v_{(0)}  \\
	&=&(hk)_{(1)}v_{(-1)}S((hk)_{(3)})\otimes (hk)_{(2)}\cdot v_{(0)} 
\end{eqnarray*}
Thus $ hk\in A $ and $ A $ is a subalgebra of $ H $.
\end{proof}

\subsection{Nichols Algebras}
In this subsection, we briefly introduce Nichols algebras. While multiple equivalent definitions exist in the literature, we adopt Heckenberger and Schneider’s formulation from~\cite{HS20}.

\begin{definition}
Let  $C = \bigoplus_{n \in \mathbb{N}_0} C(n)$  be an $\mathbb{N}_0$-graded coalgebra with projections $\pi_n: C \to C(n) $ for all $n \geq 0 $.  For each $n \geq 1$, define the map $\Delta_{1^n}^C$ as the composition:
\begin{equation*}
	\Delta_{1^n}^C : C(n) \subseteq C \xrightarrow{\Delta^{n-1}} C^{\otimes n} \xrightarrow{\pi_1^{\otimes n}} C(1)^{\otimes n},
\end{equation*}	
where $\Delta^{0}=\operatorname{id}_C:C\to C$ and $\Delta^{n}=(\operatorname{id}_C\otimes \Delta^{n-1})\Delta :C \to C^{\otimes (n+1)} $.
\end{definition}

Let $H$ be a Hopf algebra with bijective antipode and $V \in ^H_H \mathcal{YD}$. The tensor algebra $T(V)$ has a natural structure of an  $\mathbb{N}_0$-graded Hopf algebra in $ ^H_H \mathcal{YD}$, where the comultiplication is the algebra map 
$\Delta : T(V) \to T(V) \underline{\otimes} T(V)$ given by $\Delta(v) = v \otimes 1 + 1 \otimes v$, $v \in V$. Note that $T(V)$ is also an $\mathbb{N}_0$-graded coalgebra in the usual sense; thus, $ \Delta_{1^n}^{T(V)}$ can be defined as above. 

\begin{definition}\emph{(}\cite[Definitions 7.1.13]{HS20}\emph{)}
Let $H$ be a Hopf algebra with bijective antipode, and $V \in ^H_H \mathcal{YD}$. Then
\[
\B(V) = T(V) \Big/ \bigoplus_{n \geq 2} \ker\left(\Delta_{1^n}^{T(V)}\right)
\]
is called the Nichols algebra of  $V$. $\mathcal{B}(V) $ is called diagonal type if
$(V, c_{V,V})$ is of digonal type.
\end{definition}
As a vector space, $\B(V)=\bigoplus_{n\geq 0} \mathcal{B}^n(V) $ is $\mathbb{N}_0$-graded, where
\begin{center}
	$ \B^0(V) = \k 1$, $ \B^1(V) = V$, and $\B^n(V) = V^{\otimes n} /  \ker\left(\Delta_{1^n}^{T(V)}\right) $ for all $n\geq 2$.
\end{center}
Moreover, $\B(V)$ is an $\mathbb{N}_0$-graded Hopf algebra in $ ^H_H \mathcal{YD} $
and satisfies the following properties:
\begin{itemize}
	\item $\B^0(V) \cong \k $,  $ \B^1(V)\cong V $  in  $ ^H_H \mathcal{YD} $,
	\item $\B(V)$ is generated as an algebra by $\B^1 (V)$, and
	\item $\B(V)$ is strictly graded, that is, $P(\B(V))=\B^1(V)$.
\end{itemize}
In fact, these important properties characterize  Nichols algebras, see  \cite[Theorem 5.7]{And02}. 

We conclude this subsection with the following remark.

\begin{remark}\label{rmk:nicholalgebra}\hfil
{\normalfont 
\begin{itemize}
	\item[(1)] For all $n\geq 2$, let $S_n:V^{\otimes n}\to V^{\otimes n}$ be the braided symmetrizer maps associated to the braided vector space $(V,c_{V,V})$, see \cite[Definitions 1.8.10]{HS20}. By  \cite[Corollary 1.9.7]{HS20},  $\Delta_{1^n}^{T(V)}=S_n$ in $\mathrm{End}(V^{\otimes n})$. Moreover, the Nichols algebra $\B(V)$ as an algebra and a coalgebra only depends on the braided vector space $(V,c_{V,V})$.
	\item[(2)] If $V$ contains a non-zero element $v$ such that $c(v \otimes v) = v \otimes v$, then $\dim_{\k}(\B(V))=\infty$; see \cite[Example 7.1.3]{HS20}.
\end{itemize}}
\end{remark}

\subsection{Infinite-dimensional Taft algebras}
In this subsection, we briefly introduce the infinite-dimensional Taft algebras.

\begin{definition}\label{def:infinite-Taft}
Let $ n $ and $ t $ be positive integers with $ n\geq 2 $ and $ 0\leq t\leq n-1 $, and let $ \xi $ be a primitive $ n $-th root of 1. Let $ H:=H(n,t,\xi) $ be the $ \k $-algebra generated by x and g subject to the relations
\begin{center}
	$ g^n=1 $ \  and  \ $ xg=\xi gx $.
\end{center}
The coalgebra structure of H is defined by
\begin{center}
	$ \Delta(g)=g\otimes g \  , \  \varepsilon(g)=1 $
\end{center}
and
\begin{center}
	$ \Delta(x)= x\otimes g^t + 1\otimes x \  , \  \varepsilon(x)=0 $.
\end{center}
The antipode S of H is defined by 
\begin{center}
	$ S(g)=g^{-1} $ \ and  \ $ S(x)=-xg^{-t} $.
\end{center}

\end{definition}

\begin{remark}\label{rmk:finiteTaft}\hfill
{\normalfont 
\begin{itemize}
	\item [(1)] The algebra above is a pointed Hopf algebra whose group of group-like elements is 
	$G(H) =\left\langle g \right\rangle=   \{1, g, g^2, \dots, g^{n-1}\}$. 
	A linear basis is given by 
	\[
	\left\{ g^i x^j \mid 0 \leq i < n,\  j \geq 0 \right\}.
	\]
	\item [(2)] When $ t $ and $ n $ are coprime, the quotient of the algebra $ H(n,t,\xi) $ by the relation $ x^n=0 $ again forms a Hopf algebra, denoted $ H(n,t,\xi,0)  $, under the original comultiplication and antipode. This quotient algebra  $ H(n,t,\xi,0) $ is precisely the $ n^2 $-dimensional Taft algebra \cite{T71}.
\end{itemize}}
\end{remark}

\subsection{Modules and comodules over infinite-dimensional Taft algebras}
For the remainder of this paper, fix parameters $n \geq 2$, $0 \leq t \leq n-1$, and let $\xi$ be a primitive $n$-th root of unity. Set $H = H(n, t, \xi)$ and denote its group of group-like elements by $G = G(H)$. 
Set $w = \xi^t$ and  $ N=o(\xi^t)=o(g^t) $. Then for all $i, j \in \Z$, $ i \equiv j \pmod{N} $ if and only if $ ti \equiv tj \pmod{n} $.

By convention, for any integer $i \in \Z$, let $[i]$ denote the congruence class of $i$ modulo $n$. 
We begin by introducing the following concepts.

\begin{definition}
Let $(V, \cdot, \delta)$ be a left $H$-module and left $H$-comodule, $W \subseteq V$ a subspace, and $i \in \Z$. Define
\[
W_{[i]} = \{ v \in W \mid g \cdot v = \xi^i v \}
\quad \text{and} \quad 
W^{[i]} = \{ v \in W \mid \delta(v) = g^i \otimes v \}.
\]
\end{definition}

These subspaces are well-defined: when $i \equiv j \pmod{n}$, 
$W_{[i]} = W_{[j]}$ and $W^{[i]} = W^{[j]}$. 
The following properties are immediate.

\begin{lemma}\label{lem:subspace-properties}
Let $(V, \cdot, \delta)$ be a left $H$-module and left $H$-comodule, $W \subseteq V$ a subspace, and $i, j \in \Z$. Then:
\begin{itemize}
	\item [(1)]  $W_{[i]}$ is a $\k G$-submodule of $V$, and $W^{[i]}$ is an $H$-subcomodule of $V$.
	\item [(2)]  $(W_{[i]})^{[j]} = (W^{[j]})_{[i]} = \left\{ v \in W \mid g \cdot v = \xi^i v \text{ and } \delta(v) = g^j \otimes v \right\}$.
\end{itemize}
\end{lemma}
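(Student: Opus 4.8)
The statement to prove, Lemma \ref{lem:subspace-properties}, asks us to verify two routine but important structural facts. Let me think through the plan.

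The plan is to unwind the definitions directly, using the fact that $g$ is group-like and that $W_{[i]}$, $W^{[i]}$ are just eigenspace-type subspaces.

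For part (1): To see $W_{[i]}$ is a $\k G$-submodule, I would check it is closed under the action of $g$ (which generates $G$). If $v \in W_{[i]}$, then $g \cdot v = \xi^i v \in W_{[i]}$ since $g \cdot (\xi^i v) = \xi^i (g \cdot v) = \xi^{2i} v$, so $\xi^i v$ is again a $\xi^i$-eigenvector; hence $g \cdot W_{[i]} \subseteq W_{[i]}$, and since $G = \langle g \rangle$, $W_{[i]}$ is a $\k G$-submodule. For $W^{[i]}$ being an $H$-subcomodule, I need $\delta(W^{[i]}) \subseteq H \otimes W^{[i]}$. If $\delta(v) = g^i \otimes v$ with $v \in W^{[i]}$, then indeed $g^i \otimes v \in H \otimes W^{[i]}$ (as $v \in W^{[i]}$), so the comodule axiom is trivially witnessed; one should also note compatibility with $W$, i.e. $W^{[i]} \subseteq W$ by definition, and that $\delta$ restricted to $W^{[i]}$ lands in $H \otimes W^{[i]}$, which is exactly what was just observed. (One may also remark that applying $\varepsilon \otimes \mathrm{id}$ gives $v = v$ and $(\Delta \otimes \mathrm{id})\delta(v) = g^i \otimes g^i \otimes v = (\mathrm{id} \otimes \delta)\delta(v)$, consistent with $g$ being group-like, so everything is coherent.)

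For part (2): This is essentially a tautology once the definitions are spelled out. By definition, $(W_{[i]})^{[j]} = \{ v \in W_{[i]} \mid \delta(v) = g^j \otimes v\}$, and $v \in W_{[i]}$ means $v \in W$ and $g \cdot v = \xi^i v$; so $(W_{[i]})^{[j]} = \{v \in W \mid g\cdot v = \xi^i v \text{ and } \delta(v) = g^j \otimes v\}$. Symmetrically, $(W^{[j]})_{[i]} = \{v \in W^{[j]} \mid g \cdot v = \xi^i v\}$ and $v \in W^{[j]}$ means $v \in W$ and $\delta(v) = g^j \otimes v$, giving the same set. Hence the three sets coincide. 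I should also briefly justify the remark preceding the lemma that these spaces only depend on $i$ modulo $n$: since $g^n = 1$ we have $\xi^n = 1$ so $\xi^i$ depends only on $[i]$, and $g^i$ likewise depends only on $[i]$; this makes both $W_{[i]}$ and $W^{[i]}$ well-defined functions of the class $[i]$.

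I do not anticipate any real obstacle here — the lemma is labelled "immediate" in the text, and the only mild subtlety is making sure one does not conflate "$\k G$-submodule" with "$H$-submodule" (the former is weaker and is all that is claimed, since $x$ need not preserve $W_{[i]}$), and dually that $W^{[i]}$ is only claimed to be an $H$-\emph{comodule}, not that it interacts with the module structure. So the proof is just a careful but short application of definitions; the main thing to get right is bookkeeping between the module and comodule sides and being explicit that $G = \langle g\rangle$ reduces the submodule check to the single generator $g$.
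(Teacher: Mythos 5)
Your proof is correct and proceeds exactly as intended: the paper gives no argument for this lemma (it is declared ``immediate'' and left to the reader), and your direct unwinding of the definitions --- reducing the $\k G$-submodule check to the single generator $g$, observing that $\delta(v)=g^j\otimes v$ already witnesses $\delta(W^{[j]})\subseteq H\otimes W^{[j]}$, and noting that part (2) is a tautology once both conditions are spelled out --- is the standard verification the authors have in mind. No gaps.
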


Denoting the common space by $W_{[i]}^{[j]} \coloneqq (W_{[i]})^{[j]} = (W^{[j]})_{[i]}$, we note that if $i_1 \equiv i_2 \pmod{n}$ and $j_1 \equiv j_2 \pmod{n}$, then $W_{[i_1]}^{[j_1]} = W_{[i_2]}^{[j_2]}$. For a simple Yetter-Drinfeld module $V$ over $H$, every non-zero element in $V_{[i]}^{[j]}$ generates $V$ as an $H$-module (see Lemmas~\ref{thm:standbasis} and~\ref{thm:standbasisinfinite}). This fact motivates the following definition.

\begin{definition}\label{def:standardelement}
Let $(V, \cdot, \delta)$ be a left $H$-module and left $H$-comodule. A nonzero element $v \in V_{[i]}^{[j]}$ for some $i, j \in \mathbb{Z}$ is called a {\normalfont standard element} of $V$. In this case, we say $v$ is a standard element of type $(j,i)$.
\end{definition}

The following lemma is a direct consequence of Maschke's Theorem.

\begin{lemma}\label{lem:kGmoduledecomposition}
Let  $ V $ be a  left $ \k G $-module, then $ V=\bigoplus\limits_{i=1}^n V_{[i]}  $. In particular, if $ V $ is a left $ H $-module, then $ V=\bigoplus\limits_{i=1}^n V_{[i]}  $.
\end{lemma}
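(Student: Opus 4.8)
The statement to prove is Lemma~\ref{lem:kGmoduledecomposition}: any left $\k G$-module $V$ decomposes as $V = \bigoplus_{i=1}^n V_{[i]}$, and the same holds if $V$ is a left $H$-module (regarded as a $\k G$-module by restriction).

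\textbf{Plan of proof.} The plan is to invoke the semisimplicity of $\k G$ where $G = \langle g \rangle \cong \Z/n\Z$ is a finite cyclic group of order $n$. Since $\operatorname{char}\k = 0$, Maschke's Theorem guarantees that $\k G$ is a semisimple algebra, so every $\k G$-module is a direct sum of simple submodules. Because $\k$ is algebraically closed and $G$ is abelian of order $n$, the simple $\k G$-modules are all one-dimensional, and they correspond precisely to the group homomorphisms $G \to \k^\times$; since $g^n = 1$, each such character sends $g$ to an $n$-th root of unity, and as $\xi$ is a primitive $n$-th root of unity these are exactly $g \mapsto \xi^i$ for $i \in \{1, \dots, n\}$. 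Hence on any simple submodule $g$ acts as the scalar $\xi^i$ for some $i$, i.e. the simple submodule lies in $V_{[i]}$.

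\textbf{Key steps, in order.} First, recall or cite Maschke's Theorem to write $V = \bigoplus_\alpha S_\alpha$ as a sum of simple $\k G$-submodules. Second, identify each $S_\alpha$ as one-dimensional with $g$ acting by some $\xi^{i_\alpha}$, hence $S_\alpha \subseteq V_{[i_\alpha]}$; grouping the $S_\alpha$ by the value of $i_\alpha \bmod n$ gives $V = \sum_{i=1}^n V_{[i]}$. Third, verify the sum is direct: if $v = \sum_{i=1}^n v_i$ with $v_i \in V_{[i]}$ and $v = 0$, apply the distinct characters (equivalently, use that eigenvectors of $g$ for distinct eigenvalues $\xi^1, \dots, \xi^n$ are linearly independent — a Vandermonde argument) to conclude each $v_i = 0$. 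Finally, the ``in particular'' clause is immediate: an $H$-module restricts to a $\k G$-module along the inclusion $\k G \hookrightarrow H$, so the decomposition applies verbatim.

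\textbf{Main obstacle.} There is essentially no obstacle here; this is a standard fact. The only point requiring a word of care is the directness of the sum, which one settles either by the linear independence of $g$-eigenvectors with distinct eigenvalues or by observing that the idempotents $e_i = \frac{1}{n}\sum_{k=0}^{n-1} \xi^{-ik} g^k \in \k G$ (using that $n$ is invertible in $\k$) are orthogonal, sum to $1$, and satisfy $e_i \cdot V = V_{[i]}$, which gives the decomposition and its directness simultaneously. Either route is a routine computation, so the proof will be short.
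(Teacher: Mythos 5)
Your argument is correct and matches the paper's approach: the paper states this lemma without proof, noting only that it is a direct consequence of Maschke's Theorem, which is exactly the route you take (semisimplicity of $\k G$, one-dimensional simples for the cyclic group over an algebraically closed field of characteristic $0$, and the resulting eigenspace decomposition). Your additional care about directness of the sum, via the orthogonal idempotents $e_i = \frac{1}{n}\sum_{k=0}^{n-1}\xi^{-ik}g^k$, is a standard and valid way to fill in the details the paper omits.
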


We now characterize the structure of  $H$-modules.

\begin{proposition}\label{prop:structureofHmodule}
A left $H$-module structure consists of a pair $(V, T)$, where $V$ is a  left $\k G$-module  and  $T : V \to V$ is a linear map such that $T(V_{[i]}) \subseteq V_{[i-1]}$ for all $i \in \Z$.
Conversely, any such pair defines a left $H$-module structure.
\end{proposition}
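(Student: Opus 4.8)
The plan is to translate the defining relations of $H=H(n,t,\xi)$ into conditions on the action of the two algebra generators $g$ and $x$, and then invoke Lemma~\ref{lem:kGmoduledecomposition} to package the $g$-action as a $\k G$-module structure. First I would observe that giving a left $H$-module structure on $V$ is the same as giving two linear endomorphisms $\rho(g),\rho(x)\colon V\to V$ with $\rho(g)$ invertible of order dividing $n$ (so that $g^n=1$ holds) and $\rho(x)\rho(g)=\xi\,\rho(g)\rho(x)$ (so that $xg=\xi gx$ holds), since $H$ is generated as an algebra by $g$ and $x$ subject only to these two relations. The condition ``$\rho(g)$ invertible with $\rho(g)^n=\mathrm{id}$'' is, by diagonalizability over the algebraically closed field $\k$ of characteristic $0$ together with Lemma~\ref{lem:kGmoduledecomposition}, exactly the data of a left $\k G$-module structure on $V$, with eigenspace decomposition $V=\bigoplus_{i=1}^n V_{[i]}$ where $V_{[i]}=\{v\mid g\cdot v=\xi^i v\}$. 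Setting $T\coloneqq\rho(x)$, it then remains to show that the commutation relation $Tg=\xi gT$ is equivalent to the grading condition $T(V_{[i]})\subseteq V_{[i-1]}$ for all $i$.

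For that equivalence I would argue in both directions on eigenvectors. If $T$ satisfies $T(V_{[i]})\subseteq V_{[i-1]}$ for all $i$, then for $v\in V_{[i]}$ we have $g\cdot v=\xi^i v$ and $T v\in V_{[i-1]}$, so $g\cdot(Tv)=\xi^{i-1}Tv$; hence $T(g\cdot v)=\xi^i Tv$ and $g\cdot(T v)=\xi^{i-1}Tv$, which gives $T g\cdot v=\xi\,g T\cdot v$ on each $V_{[i]}$, and since $V=\bigoplus_i V_{[i]}$ this yields $\rho(x)\rho(g)=\xi\rho(g)\rho(x)$ on all of $V$. Conversely, assume $\rho(x)\rho(g)=\xi\rho(g)\rho(x)$. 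Take $v\in V_{[i]}$ and write $Tv=\sum_j w_j$ with $w_j\in V_{[j]}$ according to the decomposition of Lemma~\ref{lem:kGmoduledecomposition}. Applying $\rho(x)\rho(g)=\xi\rho(g)\rho(x)$ to $v$ gives $\xi^i Tv=\xi\, g\cdot(Tv)$, i.e. $\xi^i\sum_j w_j=\sum_j \xi^{j+1}w_j$; comparing components in the direct sum forces $\xi^i w_j=\xi^{j+1}w_j$ for each $j$, so $w_j=0$ unless $\xi^{j+1}=\xi^i$, that is, unless $j\equiv i-1\pmod n$. Hence $Tv\in V_{[i-1]}$, as desired. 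The ``conversely'' half of the proposition is then immediate: given a $\k G$-module $V$ and a linear map $T$ with $T(V_{[i]})\subseteq V_{[i-1]}$, the computation just performed shows $\rho(g)$ (the $\k G$-action) and $\rho(x)\coloneqq T$ satisfy both defining relations of $H$, so by the universal property of the presentation they extend uniquely to a left $H$-module structure on $V$.

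I do not anticipate a serious obstacle here; the statement is essentially a repackaging of the presentation of $H$. The only point requiring a little care is the correct handling of the index shift modulo $n$ — in particular noting that the condition $T(V_{[i]})\subseteq V_{[i-1]}$ is well-posed because $V_{[i]}$ depends only on $[i]\in\Z/n\Z$ (as already recorded after the definition of $W_{[i]}$), and that the comparison-of-components argument only pins down $j\bmod n$, which is exactly what is needed. One should also be mildly careful to state that the correspondence $(V,T)\leftrightarrow(\text{$H$-module})$ is a genuine bijection (mutually inverse constructions), not merely that each side produces the other; this follows because the $H$-module structure is uniquely determined by the images of the algebra generators $g$ and $x$, which are recovered as the $\k G$-action and $T$ respectively.
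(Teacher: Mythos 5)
Your proof is correct, and the forward direction coincides with the paper's. Where you diverge is in the converse: the paper defines the action on the linear basis $\{g^i x^j \mid 0\le i<n,\ j\ge 0\}$ by $(g^i x^j)\cdot v \coloneqq g^i\cdot T^j(v)$ and then verifies associativity by an explicit computation of $((g^ix^j)(g^kx^l))\cdot v$ versus $(g^ix^j)\cdot((g^kx^l)\cdot v)$ on eigenvectors, whereas you invoke the universal property of the presentation $H=\k\langle x,g\rangle/(g^n-1,\ xg-\xi gx)$ and reduce everything to checking the two defining relations for $\rho(g)$ and $\rho(x)=T$, plus the eigenspace equivalence $\rho(x)\rho(g)=\xi\rho(g)\rho(x)\iff T(V_{[i]})\subseteq V_{[i-1]}$. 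Both arguments are sound; yours is shorter and more conceptual, and it makes the bijectivity of the correspondence transparent (the $H$-module structure is determined by the images of the generators), but it leans on the presentation being exactly as in Definition~\ref{def:infinite-Taft}, while the paper's computation is self-contained modulo the PBW-type basis recorded in Remark~\ref{rmk:finiteTaft}(1). Your component-comparison argument for the converse implication correctly pins down $j\bmod n$ only, which is all that is needed since $V_{[j]}$ depends only on the residue class of $j$.
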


\begin{proof}
Suppose $V$ is a left $H$-module, which is automatically a $\k G$-module. Define $T: V \to V$ by $T(v) = x \cdot v$. For any $i \in \mathbb{Z}$ and $v \in V_{[i]}$:	
\begin{eqnarray*}
	g \cdot (T(v))&=&(gx)\cdot v\\
	&=&(\xi^{-1}xg)\cdot v \\
	&=&\xi^{i-1} (x\cdot v) \\
	&=&\xi^{i-1}T(v),
\end{eqnarray*}
confirming $T(V_{[i]}) \subseteq V_{[i-1]}$.

Conversely, given a pair $(V, T)$ with $V$ a  left $\k G$-module and $T: V \to V$ satisfying $T(V_{[i]}) \subseteq V_{[i-1]}$ for all $i \in \mathbb{Z}$, define the $H$-action by
\[
(g^i x^j) \cdot v \coloneqq g^i \cdot T^j(v)
\]
for $0 \leq i \leq n-1$, $j \geq 0$, and $v \in V$. 

Fix $v \in V_{[m]}$ for some $m \in \Z$. The identity $1 \cdot v = v$ holds trivially. For any $0 \leq i,k \leq n-1$ and $j,l \geq 0$, let $p \equiv i+k \pmod{n}$ with $0 \leq p \leq n-1$. Then
\begin{eqnarray*}
	((g^ix^j)(g^kx^l)) \cdot v &=&(\xi^{jk}g^{i+k}x^{j+l}) \cdot v \\
	&=&(\xi^{jk}g^{p}x^{j+l}) \cdot v \\
	&=&\xi^{jk+p(m-(j+l))} T^{j+l}(v)
\end{eqnarray*}
and
\begin{eqnarray*}
	(g^ix^j)\cdot ((g^kx^l) \cdot v)
	&=&\xi^{k(m-l)}(g^ix^j)\cdot  T^l(v)\\
	&=&\xi^{k(m-l)}\xi^{i(m-(j+l))} T^{j+l}(v) \\
    &=&\xi^{jk+(i+k)(m-(j+l))} T^{j+l}(v) \\
    &=&\xi^{jk+p(m-(j+l))} T^{j+l}(v).
\end{eqnarray*}
The equality $((g^i x^j)(g^k x^l)) \cdot v = (g^i x^j) \cdot ((g^k x^l) \cdot v)$ follows, confirming $V$ is a left $H$-module.
\end{proof}

The following lemma will be used to establish the simplicity of modules constructed in Section~\ref{section4}. Its proof follows from standard linear algebra techniques and is omitted.

\begin{lemma}\label{lem:kGmoduleproperty}
Let  $ V $ be a  left $ \k G $-module 
 and $ W $ be a submodule of $ V $. If $ \sum\limits_{i=1}^n v_i \in W$, where $ v_i \in V_{[i]} $  for each $ i $, then $ v_i\in W $ for each $ i $.
\end{lemma}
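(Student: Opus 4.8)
The statement to prove is Lemma~\ref{lem:kGmoduleproperty}: if $V$ is a left $\k G$-module with submodule $W$, and $\sum_{i=1}^n v_i \in W$ with $v_i \in V_{[i]}$, then each $v_i \in W$.

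\medskip

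The plan is to exploit the fact that $G = \langle g \rangle$ is cyclic of order $n$, so that the action of $g$ on $V$ diagonalizes with eigenvalues among the $n$-th roots of unity $\{\xi^1, \dots, \xi^n\}$, and $V_{[i]}$ is precisely the $\xi^i$-eigenspace. The key observation is that applying powers of $g$ to the element $u := \sum_{i=1}^n v_i$ stays inside $W$ (since $W$ is a $\k G$-submodule), and produces a system of linear combinations of the $v_i$ whose coefficient matrix is a Vandermonde matrix in the distinct scalars $\xi^1, \dots, \xi^n$.

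\medskip

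Concretely, first I would note that for each $k$ with $0 \le k \le n-1$, we have $g^k \cdot u = \sum_{i=1}^n \xi^{ki} v_i \in W$, because $W$ is $G$-stable and $g^k \cdot v_i = \xi^{ki} v_i$. This gives $n$ elements $w_k := g^k \cdot u \in W$ for $k = 0, 1, \dots, n-1$. Next, I would assemble these into the matrix identity $(w_0, \dots, w_{n-1})^{\mathrm{T}} = M \, (v_1, \dots, v_n)^{\mathrm{T}}$, where $M = (\xi^{ki})_{0 \le k \le n-1,\, 1 \le i \le n}$. Since $\xi$ is a primitive $n$-th root of unity, the scalars $\xi, \xi^2, \dots, \xi^n = 1$ are pairwise distinct, so $M$ is (a column permutation of) a Vandermonde matrix and hence invertible over $\k$. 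Inverting, each $v_i$ is a $\k$-linear combination $v_i = \sum_{k=0}^{n-1} (M^{-1})_{ik} \, w_k$ of the elements $w_k \in W$, and therefore $v_i \in W$. This completes the proof.

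\medskip

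There is essentially no obstacle here — the argument is routine linear algebra, which is exactly why the paper omits it. The only point requiring a moment's care is confirming that the relevant matrix is genuinely invertible: one must use that $\mathrm{char}\,\k = 0$ (or at least that $\xi$ has exact order $n$, which is built into the hypothesis that $\xi$ is a primitive $n$-th root of unity) so that the $n$ eigenvalues are distinct and the Vandermonde determinant $\prod_{i<j}(\xi^j - \xi^i)$ is nonzero. Everything else follows formally from the $G$-stability of $W$.
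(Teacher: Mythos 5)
Your proof is correct and is precisely the standard argument the paper alludes to when it omits the proof as following ``from standard linear algebra techniques'': applying $g^0,\dots,g^{n-1}$ to $\sum_i v_i$ and inverting the resulting Vandermonde matrix in the distinct eigenvalues $\xi^1,\dots,\xi^n$ (equivalently, hitting the element with the idempotents $e_i=\frac{1}{n}\sum_k \xi^{-ki}g^k\in\k G$). No issues.
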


If $ V $ is a comodule, define the socle of $ V $, $ \soc (V) $ to be the sum of all simple subcomodules of $ V $. Since every non-zero comodule contains a simple subcomodule, $\soc(V) \neq 0$ whenever $V \neq 0$. For  any Yetter-Drinfeld module $ V $ over $H$, $\soc(V)$ is in fact spanned by its standard elements (Definition~\ref{def:standardelement}).

\begin{lemma}\label{lem:socV}
Let  $ V $ be a Yetter-Drinfeld module over $ H $, then 
\begin{itemize}
	\item [(1)]  $ V^{[i]}  $ is a $ \k G $-submodule for each $ i\in \Z $;
	\item [(2)] $ \soc (V)=\bigoplus\limits_{1\leq i,j\leq n} V_{[j]}^{[i]} $.
\end{itemize} 
In particular, $V \neq 0$ implies $V_{[j]}^{[i]} \neq 0$ for some $i,j \in \Z$, so $V$ contains standard elements.
\end{lemma}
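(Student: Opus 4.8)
\textbf{Proof proposal for Lemma~\ref{lem:socV}.}

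The plan is to establish (1) first, then derive (2) from it using the structure of comodules over $H$ together with Lemma~\ref{lem:kGmoduledecomposition}. For part (1), I would fix $i \in \Z$ and check that $V^{[i]} = \{v \in V \mid \delta(v) = g^i \otimes v\}$ is stable under the $G$-action; since $g$ generates $G$, it suffices to show $g \cdot v \in V^{[i]}$ whenever $v \in V^{[i]}$. Here the Yetter--Drinfeld compatibility condition \eqref{compatibilitycondition} applied to $h = g$ is the essential tool: one computes $\delta(g \cdot v) = g_{(1)} v_{(-1)} S(g_{(3)}) \otimes g_{(2)} \cdot v_{(0)} = g \cdot g^i \cdot g^{-1} \otimes g \cdot v = g^i \otimes g\cdot v$, using that $g$ is group-like and $G$ is abelian. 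This shows $g \cdot v \in V^{[i]}$, hence $V^{[i]}$ is a $\k G$-submodule.

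For part (2), I would argue both inclusions. First, each $V^{[i]}$ is a subcomodule on which the coaction is $\delta(v) = g^i \otimes v$; such a comodule is a sum of one-dimensional (hence simple) subcomodules, so $V^{[i]} \subseteq \soc(V)$, and therefore $\sum_i V^{[i]} \subseteq \soc(V)$. Applying Lemma~\ref{lem:kGmoduledecomposition} to the $\k G$-module $V^{[i]}$ gives $V^{[i]} = \bigoplus_{j=1}^n (V^{[i]})_{[j]} = \bigoplus_{j=1}^n V^{[i]}_{[j]}$, so $\bigoplus_{1 \le i,j \le n} V_{[j]}^{[i]} \subseteq \soc(V)$. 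For the reverse inclusion, let $S \subseteq V$ be any simple subcomodule. Since $G(H) = \langle g \rangle$ and $H$ is pointed with coradical $\k G$, the simple (left) $H$-comodules are precisely the one-dimensional ones $\k v$ with $\delta(v) = g^i \otimes v$ for some $i$; this is because the coradical filtration forces a simple subcomodule to lie in the coradical $\k G$-part, and the $\k G$-comodule $S$ decomposes into one-dimensional pieces indexed by group-likes. Hence $S \subseteq V^{[i]}$ for some $i$, and summing over all simple subcomodules gives $\soc(V) \subseteq \sum_i V^{[i]} = \bigoplus_{1 \le i,j \le n} V_{[j]}^{[i]}$, where the last equality uses the $\k G$-decomposition again and the directness follows since the $V^{[i]}$ have distinct coactions and the $V_{[j]}$ distinct $g$-eigenvalues. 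The final assertion is then immediate: $V \ne 0$ forces $\soc(V) \ne 0$, so some $V_{[j]}^{[i]} \ne 0$.

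The main obstacle I anticipate is pinning down precisely \emph{why} every simple $H$-subcomodule is one-dimensional and sits inside some $V^{[i]}$ — that is, identifying $\soc(V)$ with the group-like-graded part. This requires invoking that $H$ is pointed (so its coradical is $\k G$) and the general fact that the socle of any comodule equals its intersection with the coradical, i.e., $\soc(V) = V \,\square_H\, (\k G)$ in comodule terms, or equivalently that a simple comodule is a simple comodule over the coradical. I would either cite the relevant statement from \cite{MonS,Swe69} about comodules over pointed coalgebras, or give the short direct argument: a simple subcomodule $S$ satisfies $\delta(S) \subseteq H \otimes S$ with $S$ simple, so the structure map makes $S$ a simple right $H^{*}$-module factoring through a simple subcoalgebra of $H$; since $H$ is pointed every simple subcoalgebra is $\k g$ for a group-like $g$, forcing $\dim S = 1$ and $\delta(v) = g \otimes v$. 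Everything else is routine bookkeeping with the $\k G$-module decomposition of Lemma~\ref{lem:kGmoduledecomposition}.
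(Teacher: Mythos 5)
Your proposal is correct and follows essentially the same route as the paper's proof: part (1) is the identical computation $\delta(g\cdot v)=g\,g^{i}S(g)\otimes g\cdot v=g^{i}\otimes g\cdot v$ from the compatibility condition, and part (2) rests on the same two ingredients the paper uses, namely that $H$ is pointed (so every simple subcomodule is one-dimensional and spanned by a vector with $\delta(v)=g^{i}\otimes v$, giving $\soc(V)=\bigoplus_i V^{[i]}$) together with Lemma~\ref{lem:kGmoduledecomposition} applied to each $V^{[i]}$ via part (1). You simply spell out both inclusions and the pointedness argument in more detail than the paper does; no substantive difference.
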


\begin{proof}
For $i \in \mathbb{Z}$ and $v \in V^{[i]}$, the coaction satisfies:
\[
\delta(g \cdot v) = g g^i S(g) \otimes g \cdot v = g^i \otimes g \cdot v,
\]
confirming $g \cdot v \in V^{[i]}$. Thus $V^{[i]}$ is a $\k G$-submodule, establishing (1).

Since $H$ is pointed, every simple $H$-comodule is one-dimensional and corresponds to a group-like element. By Lemma~\ref{lem:kGmoduledecomposition} and part (1):
$$\soc (V)=\bigoplus\limits_{i=1}^n V^{[i]}=\bigoplus\limits_{i=1}^n \left(\bigoplus\limits_{j=1}^n V_{[j]}^{[i]}\right)=\bigoplus\limits_{1\leq i,j\leq n} V_{[j]}^{[i]},$$
proving (2). 

Finally, $V \neq 0$ implies $\soc(V) \neq 0$, so $V_{[j]}^{[i]} \neq 0$ for some $i,j \in \mathbb{Z}$.
\end{proof} 

We conclude this subsection with the notion of a \emph{comatrix}, a coalgebraic analog of the matrix representation for linear maps.

\begin{definition}
Let $(C,\Delta,\varepsilon)$ be a coalgebra over $\k$, a square matrix $\A=(c_{ij})_{r\times r}$ over $C$ is said to be a comatrix, if for any $1\leq i,j \leq r$, we have
\begin{center}
	$\Delta(c_{ij})=\sum\limits_{l=1}^r c_{il}\otimes c_{lj}$ \ and \ $\varepsilon(c_{ij})=\delta_{i, j}$, 
\end{center}
where $\delta_{i, j}$ denotes the Kronecker notation.
\end{definition}

\begin{lemma}\emph{(}\cite[Lemma 2.1.1]{HS20}\emph{)}\label{lem:comoduleequicodition}
Let $(C,\Delta,\varepsilon)$ be a coalgebra over $\k$ and $ V $ an $n$-dimensional vector space with basis $\{v_1, \dots, v_n\}$. For linear map $\delta : V \to C \otimes V$ defined by
\[
\delta(v_k) = \sum_{l=1}^n c_{kl} \otimes v_l \quad (1 \leq k \leq n, \  c_{kl} \in C),
\]
let $ \A=(c_{ij})_{n\times n} $. Then $ (V,\delta) $ is a left $ C $-comodule if and only if $ \A $ is a comatrix.
\end{lemma}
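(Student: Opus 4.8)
The statement is a standard equivalence (``coactions $\leftrightarrow$ comatrices''), and the plan is to unwind both the coassociativity axiom and the counit axiom for the coaction $\delta$ in terms of the structure constants $c_{kl}$, then read off exactly the two defining conditions of a comatrix. Concretely, I would first fix the basis $\{v_1,\dots,v_n\}$ and write $\delta(v_k)=\sum_{l} c_{kl}\otimes v_l$; throughout, the key observation is that any element of $C\otimes C\otimes V$ (resp. $C\otimes V$) can be uniquely expanded against the basis vectors of $V$ in the last tensor slot, so that two such elements are equal if and only if their $C\otimes C$-coefficients (resp. $C$-coefficients) of each $v_l$ coincide.

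\emph{Coassociativity.} I would compute $(\Delta_C\otimes\operatorname{id}_V)\circ\delta$ and $(\operatorname{id}_C\otimes\delta)\circ\delta$ applied to $v_k$. On one side,
\[
(\Delta_C\otimes\operatorname{id}_V)\delta(v_k)=\sum_{l}\Delta(c_{kl})\otimes v_l,
\]
and on the other side,
\[
(\operatorname{id}_C\otimes\delta)\delta(v_k)=\sum_{l}c_{kl}\otimes\delta(v_l)=\sum_{l,m}c_{kl}\otimes c_{lm}\otimes v_m
=\sum_{m}\Bigl(\sum_{l}c_{kl}\otimes c_{lm}\Bigr)\otimes v_m.
\]
Comparing the coefficient of each $v_m$ (using the uniqueness of the expansion noted above), coassociativity of $\delta$ is equivalent to $\Delta(c_{km})=\sum_{l}c_{kl}\otimes c_{lm}$ for all $1\leq k,m\leq n$, which is the first comatrix condition.

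\emph{Counit.} Similarly, the counit axiom for a comodule asserts that $(\varepsilon_C\otimes\operatorname{id}_V)\circ\delta=\operatorname{id}_V$ under the canonical identification $\k\otimes V\cong V$. Applying the left-hand side to $v_k$ gives $\sum_{l}\varepsilon(c_{kl})\,v_l$, and this equals $v_k$ for every $k$ if and only if $\varepsilon(c_{kl})=\delta_{k,l}$ for all $k,l$, which is the second comatrix condition. Putting the two equivalences together yields: $(V,\delta)$ is a left $C$-comodule $\iff$ $\A=(c_{ij})$ is a comatrix. I do not anticipate any real obstacle here; the only point requiring a little care is to state precisely the freeness/uniqueness of the expansion in the last tensor factor (which holds because $\{v_l\}$ is a basis and tensoring a basis with a vector space is ``coordinatewise''), so that equality in $C^{\otimes 2}\otimes V$ really can be checked slotwise. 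Since the paper cites \cite[Lemma 2.1.1]{HS20}, the expected write-up is just this short unwinding, possibly replaced by a direct citation.
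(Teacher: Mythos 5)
Your proposal is correct and is exactly the standard unwinding that the cited reference \cite[Lemma 2.1.1]{HS20} performs; the paper itself gives no proof, relying on the citation. Comparing coefficients of the basis vectors $v_m$ in the last tensor slot translates coassociativity into $\Delta(c_{km})=\sum_l c_{kl}\otimes c_{lm}$ and the counit axiom into $\varepsilon(c_{kl})=\delta_{k,l}$, which are precisely the two comatrix conditions, so nothing is missing.
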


\section{Comodule Structures over Simple Yetter-Drinfeld Modules}\label{section3}

In this section, we establish that every simple Yetter-Drinfeld module over $H$ admits a standard basis and compute its comatrix relative to this basis. Analysis of these comatrices reveals fundamental properties of simple Yetter-Drinfeld modules over $H$, culminating in subsection \ref{subsection3}.

\subsection{Existence of Standard Bases for Simple Yetter-Drinfeld Modules }\label{subsection31}

Let $V$ be a Yetter-Drinfeld module over $H$. For any $v \in V_{[j]}^{[i]}$ with $i, j \in \mathbb{Z}$, define $V(v,k)$ to be the linear span of $\{v, x \cdot v, \dots, x^k \cdot v\}$ for $k \geq 0$, and $V(v,\infty)$ to be the linear span of $\{x^k \cdot v \mid k \geq 0\}$.

\begin{proposition}\label{pro:spaniscomodule}
Let $V$ be a  Yetter-Drinfeld module over $H$ and take $v \in V_{[j]}^{[i]}$ for $i,j \in \Z$. Then for any $k \geq 0$:
\begin{itemize}
	\item [(1)] $x^k \cdot v \in V_{[j-k]}$;
	\item [(2)] $V(v,k)$ is a subcomodule of $V$;
	\item [(3)] If $x^{k+1} \cdot v \in V(v,k)$, then $V(v,k)$ is a Yetter-Drinfeld submodule of $V$;
	\item [(4)] $V(v,\infty)$ is a Yetter-Drinfeld submodule of $V$.
\end{itemize}
\end{proposition}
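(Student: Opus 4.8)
The plan is to prove (1) first, then deduce the comodule statement (2) by an explicit computation of $\delta(x^k\cdot v)$, and finally obtain (3) and (4) as essentially formal consequences of (2) together with the definition of a Yetter--Drinfeld submodule.

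For (1), I would argue by induction on $k$. The base case $k=0$ is the hypothesis $v\in V_{[j]}$. For the inductive step, recall from Proposition \ref{prop:structureofHmodule} (applied with $T(\,\cdot\,)=x\cdot(\,\cdot\,)$) that $T$ sends $V_{[i]}$ into $V_{[i-1]}$; hence if $x^k\cdot v\in V_{[j-k]}$ then $x^{k+1}\cdot v=x\cdot(x^k\cdot v)\in V_{[j-k-1]}$, which is the claim for $k+1$. (Alternatively one computes $g\cdot(x^{k}\cdot v)=(g x^{k})\cdot v=\xi^{-k}(x^{k}g)\cdot v=\xi^{j-k}\,x^{k}\cdot v$ directly from $xg=\xi gx$.)

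For (2), I would show by induction on $k$ that
\[
\delta(x^k\cdot v)=\sum_{\ell=0}^{k}\binom{k}{\ell}_{\!?}\, g^{i-t\ell}x^{\ell}\otimes x^{k-\ell}\cdot v
\]
for suitable scalar coefficients — more precisely, I would first establish the cleaner qualitative statement that $\delta(x^k\cdot v)\in\sum_{\ell=0}^{k} Hx^{\ell}\otimes V(v,k)$, which is all that is needed, by repeatedly applying the compatibility condition \eqref{compatibilitycondition} with $h=x$. Concretely, using $\Delta(x)=x\otimes g^t+1\otimes x$ (so $x_{(1)}\otimes x_{(2)}\otimes x_{(3)}$ has the three terms $x\otimes g^t\otimes g^t$, $1\otimes x\otimes g^t$, $1\otimes 1\otimes x$) and $S(x)=-xg^{-t}$, $S(g^t)=g^{-t}$, one computes from $\delta(x^{k-1}\cdot v)=\sum_m h_m\otimes u_m$ with $u_m\in V(v,k-1)$ that
\[
\delta(x\cdot(x^{k-1}\cdot v))=\sum_m\Big(x h_m S(g^t)\otimes x\cdot u_m + h_m S(g^t)\otimes x\cdot u_m + h_m S(x)\otimes u_m\Big),
\]
and each term lies in $H x^{\le k}\otimes V(v,k)$ because $x\cdot u_m\in V(v,k)$ and $u_m\in V(v,k-1)\subseteq V(v,k)$, while the $H$-components involve at most one extra factor of $x$. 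This shows $\delta\big(V(v,k)\big)\subseteq H\otimes V(v,k)$, i.e.\ $V(v,k)$ is a subcomodule. The main technical obstacle is bookkeeping the $H$-component precisely enough; but since we only need membership in $H\otimes V(v,k)$ rather than an exact formula, one can keep the computation at the level of "the span of the relevant terms" and avoid $q$-binomial identities entirely. (If an exact comatrix is wanted — and it is computed later in the section — the coefficient of $g^{i}x^{\ell}$-type terms can be pinned down by the same recursion.)

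For (3), under the hypothesis $x^{k+1}\cdot v\in V(v,k)$ the subspace $V(v,k)$ is closed under the action of $x$ (it is visibly closed under $g$, being a sum of $g$-eigenspaces by part (1)), hence is an $H$-submodule by Proposition \ref{prop:structureofHmodule}; combined with (2) it is an $H$-subcomodule as well, and since the Yetter--Drinfeld compatibility condition is inherited by any subspace that is simultaneously a submodule and a subcomodule, $V(v,k)$ is a Yetter--Drinfeld submodule. For (4), $V(v,\infty)=\bigcup_{k\ge 0}V(v,k)$ is a union of subcomodules, hence a subcomodule; it is manifestly stable under $x$ and under $g$, hence an $H$-submodule by Proposition \ref{prop:structureofHmodule}; therefore it is a Yetter--Drinfeld submodule. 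I expect (1) and the qualitative form of (2) to carry all the weight, with (3) and (4) being short.
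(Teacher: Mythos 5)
Your proposal is correct and follows essentially the same route as the paper: part (1) via Proposition \ref{prop:structureofHmodule}, part (2) by induction on $k$ using the compatibility condition \eqref{compatibilitycondition} with $h=x$ to see that $\delta(x^{k}\cdot v)$ lands in $H\otimes V(v,k)$, and parts (3) and (4) as formal consequences. One small slip: in your displayed formula the first summand should read $x h_m S(g^t)\otimes g^t\cdot u_m$ rather than $x h_m S(g^t)\otimes x\cdot u_m$ (it comes from the component $x\otimes g^t\otimes g^t$ of $\Delta^{(2)}(x)$, which you listed correctly), but this does not affect the argument since $g^t\cdot u_m\in V(v,k-1)$ by part (1).
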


\begin{proof}
(1) follows directly from Proposition \ref{prop:structureofHmodule}.

For (2), we proceed by induction on $k$. The case $k=0$ holds trivially. Assume $V(v,k)$ is a subcomodule. Then $\delta(x^k \cdot v) = \sum_{l=0}^k c_{kl} \otimes (x^l \cdot v)$ for some $c_{kl} \in H$. By the compatibility condition \eqref{compatibilitycondition}, 	
\begin{eqnarray*}
	& &\delta (x^{k+1} \cdot v) \\
	&=&\delta (x\cdot (x^{k} \cdot v)) \\
	&=&\sum\limits_{l=0}^k \left(c_{kl}S(x)\otimes 1\cdot (x^l \cdot v)+ c_{kl}S(g^t)\otimes x\cdot (x^l \cdot v)+xc_{kl}S(g^t)\otimes g^t\cdot (x^l \cdot v) \right) \\
	&=&\sum\limits_{l=0}^{k+1} c_{k+1,l} \otimes x^l \cdot v,
\end{eqnarray*}
where
\begin{equation}\label{coefficientckl}
	c_{k+1,l}=\begin{cases}
		c_{k0}S(x)+\xi^{tj}xc_{k0}S(g^t),     & l=0, \\[5pt]
		c_{kl}S(x)+\xi^{t(j-l)}xc_{kl}S(g^t) +    c_{k,l-1}S(g^t),     \qquad    &     0<l<k+1,  \\[5pt]
		c_{kk} S(g^t),           &     l=k+1.
	\end{cases}
\end{equation}
Thus $\delta(x^{k+1} \cdot v) \in H \otimes V(v,k+1)$. By the induction hypothesis, $V(v,k+1)$ is a subcomodule.

For (3), note that by (1), $V(v,k)$ is a $\k G$-submodule. If $x^{k+1} \cdot v \in V(v,k)$, then $V(v,k)$ is $x$-stable, hence an $H$-submodule. Combined with (2), it is a Yetter-Drinfeld submodule. 

By a similar argument, one can prove (4).
\end{proof}

Note that every non-zero  Yetter-Drinfeld module over $H$ contains standard elements by Lemma \ref{lem:socV}.

\begin{lemma}\label{lem:standbasis}
Let $V$ be a finite-dimensional simple Yetter-Drinfeld module over $H$, $v \in V$ a standard element, and $k \geq 0$ an integer. The following are equivalent:
\begin{itemize}
	\item[(1)] $\{v, x \cdot v, \dots, x^k \cdot v\}$ is a basis of $V$.
	\item[(2)] $k$ is the largest integer such that $\{v, x \cdot v, \dots, x^k \cdot v\}$ is linearly independent.
\end{itemize}
\end{lemma}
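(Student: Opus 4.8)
The plan is to prove the two implications separately, using the simplicity of $V$ together with Proposition~\ref{pro:spaniscomodule}. The implication (1)$\Rightarrow$(2) is the easy direction: if $\{v,x\cdot v,\dots,x^k\cdot v\}$ is a basis of $V$, then it is certainly linearly independent, and no longer list $\{v,x\cdot v,\dots,x^{k+1}\cdot v\}$ can be linearly independent because it has $k+2>\dim_\k V$ elements. Hence $k$ is the largest integer with the stated independence property.

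For the converse (2)$\Rightarrow$(1), suppose $k$ is the largest integer such that $\{v,x\cdot v,\dots,x^k\cdot v\}$ is linearly independent. Then $\{v,x\cdot v,\dots,x^{k+1}\cdot v\}$ is linearly dependent, so $x^{k+1}\cdot v$ is a linear combination of $v,x\cdot v,\dots,x^k\cdot v$; that is, $x^{k+1}\cdot v\in V(v,k)$. By Proposition~\ref{pro:spaniscomodule}(3), $V(v,k)$ is then a Yetter-Drinfeld submodule of $V$. Since $v\neq 0$, we have $V(v,k)\neq 0$, and since $V$ is simple, $V(v,k)=V$. Because $\{v,x\cdot v,\dots,x^k\cdot v\}$ spans $V(v,k)=V$ and is linearly independent by hypothesis, it is a basis of $V$.

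This argument is short and essentially formal; the only input beyond linear algebra is Proposition~\ref{pro:spaniscomodule}(3), which guarantees that once the orbit of $v$ under powers of $x$ becomes linearly dependent, the span it generates is closed under both the $H$-action and $H$-coaction. I do not anticipate a genuine obstacle here; the one point to state carefully is that finite-dimensionality of $V$ ensures such a $k$ exists (so that (2) is not vacuous) and bounds it by $\dim_\k V-1$, which is exactly what makes (1)$\Rightarrow$(2) work. One should also note in passing that a standard element $v$ is by definition nonzero, so $\{v\}$ itself is linearly independent and $k\geq 0$ is meaningful.
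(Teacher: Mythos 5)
Your proposal is correct and follows essentially the same route as the paper: the forward direction is a dimension count, and the converse uses $x^{k+1}\cdot v\in V(v,k)$ together with Proposition~\ref{pro:spaniscomodule}(3) and simplicity to conclude $V=V(v,k)$. No issues.
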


\begin{proof}
Since (1) $\Rightarrow$ (2) is immediate, we prove (2) $\Rightarrow$ (1). Assume $k$ is the largest integer with $\{v, x \cdot v, \dots, x^k \cdot v\}$ linearly independent. Then $x^{k+1} \cdot v \in V(v,k)$. By Proposition \ref{pro:spaniscomodule}(3), $V(v,k)$ is a Yetter-Drinfeld submodule. As $V$ is simple, $V = V(v,k)$. Thus $\{v, x \cdot v, \dots, x^k \cdot v\}$ is a basis of $V$.
\end{proof}

\begin{lemma}\label{thm:standbasis}
Let $V$ be a simple Yetter-Drinfeld module over $H$ with $\dim_{\k} V = p+1$ for $p \geq 0$. For any standard element $v \in V$, the set $\{v, x\cdot v, \dots, x^p \cdot v\}$ forms a basis of $V$. 
\end{lemma}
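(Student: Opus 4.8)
The plan is to deduce this statement from Lemma~\ref{lem:standbasis} together with Lemma~\ref{thm:standbasisinfinite} (the infinite-dimensional analogue referenced earlier), splitting into the finite- and infinite-dimensional cases. If $\dim_\k V = p+1 < \infty$, then by Lemma~\ref{lem:standbasis} it suffices to show that $k=p$ is the largest integer for which $\{v, x\cdot v, \dots, x^k\cdot v\}$ is linearly independent. Since these are $p+1$ vectors in a $(p+1)$-dimensional space, linear independence of $\{v,\dots,x^p\cdot v\}$ would force it to be a basis, so the only thing to rule out is that the list becomes dependent earlier, i.e.\ that $x^{m+1}\cdot v \in V(v,m)$ for some $m < p$. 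But in that case Proposition~\ref{pro:spaniscomodule}(3) shows $V(v,m)$ is a Yetter-Drinfeld submodule, and it is nonzero (it contains $v\neq 0$) and proper (its dimension is at most $m+1 \leq p < p+1 = \dim_\k V$), contradicting simplicity of $V$. Hence no such $m$ exists, $k=p$ is maximal, and Lemma~\ref{lem:standbasis} gives the conclusion.

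For the infinite-dimensional case ($p$ is to be read as $p=\infty$, or the statement is only asserted for finite $p\geq 0$ and the infinite case is handled by the companion lemma), the argument is the same in spirit: one shows that $\{x^k\cdot v \mid k\geq 0\}$ is linearly independent, because otherwise some $V(v,m)$ is a finite-dimensional, nonzero, proper Yetter-Drinfeld submodule — nonzero as it contains $v$, and proper because $V$ is infinite-dimensional — again contradicting simplicity. Then $V(v,\infty)$ is a Yetter-Drinfeld submodule by Proposition~\ref{pro:spaniscomodule}(4), nonzero, hence equals $V$ by simplicity, so $\{x^k\cdot v \mid k\geq 0\}$ spans and is therefore a basis.

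In both cases the key inputs are already in place: Lemma~\ref{lem:socV} guarantees that a standard element $v$ exists at all (though here $v$ is given), and Proposition~\ref{pro:spaniscomodule} supplies the crucial fact that any ``initial segment'' $V(v,k)$ that happens to be $x$-stable is automatically a Yetter-Drinfeld submodule, which is what lets simplicity do all the work. There is essentially no computation; the only subtlety is bookkeeping the dimension inequality $\dim_\k V(v,m) \leq m+1$ to confirm properness. I expect no real obstacle — the main point is simply to invoke the right combination of the preceding results, and, if the statement is meant to cover $p=\infty$, to note explicitly that the reasoning in Lemma~\ref{lem:standbasis} transports verbatim to the infinite setting using part (4) of Proposition~\ref{pro:spaniscomodule} in place of part (3).
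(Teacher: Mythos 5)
Your argument is correct and follows essentially the same route as the paper: the paper takes the largest $k$ with $\{v, x\cdot v,\dots,x^k\cdot v\}$ linearly independent, applies Lemma~\ref{lem:standbasis} to see this set is a basis, and concludes $k=p$ by counting dimensions, which is just a slight repackaging of your contradiction argument via Proposition~\ref{pro:spaniscomodule}(3) and simplicity. Your additional discussion of the infinite-dimensional case is harmless but not needed here, since the statement assumes $\dim_{\k}V=p+1<\infty$ and the infinite case is handled separately in Lemma~\ref{thm:standbasisinfinite}.
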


\begin{proof}
Let $k$ be the largest integer such that $\{v, x\cdot v, \dots, x^k \cdot v\}$ is linearly independent. By Lemma \ref{lem:standbasis}, this set constitutes a basis of $V$. Hence $k+1 = \dim_{\k} V = p+1$, yielding $k=p$ and completing the proof. 
\end{proof}

\begin{lemma}\label{thm:standbasisinfinite}
Let $V$ be an infinite-dimensional simple Yetter-Drinfeld module over $H$. For any standard element $v \in V$, the set $\{x^k \cdot v \mid  k\geq 0   \}$ forms a basis of $V$. 	
\end{lemma}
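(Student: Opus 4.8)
The plan is to mirror the finite-dimensional case (Lemmas~\ref{lem:standbasis} and~\ref{thm:standbasis}) but argue by contradiction using the fact that $V$ is infinite-dimensional. First I would fix a standard element $v \in V_{[j]}^{[i]}$, whose existence is guaranteed by Lemma~\ref{lem:socV}, and consider the chain of subspaces $V(v,0) \subseteq V(v,1) \subseteq \cdots$ together with their union $V(v,\infty)$. By Proposition~\ref{pro:spaniscomodule}(4), $V(v,\infty)$ is a Yetter-Drinfeld submodule of $V$; since $v \neq 0$ we have $V(v,\infty) \neq 0$, and simplicity of $V$ forces $V(v,\infty) = V$. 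Hence $\{x^k \cdot v \mid k \geq 0\}$ spans $V$.

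It remains to prove linear independence. Suppose not; then there is a smallest integer $k \geq 0$ with $x^{k+1}\cdot v \in V(v,k)$, so that $\{v, x\cdot v, \dots, x^k\cdot v\}$ is a basis of $V(v,k)$ and is a maximal linearly independent initial segment. By Proposition~\ref{pro:spaniscomodule}(3), $V(v,k)$ is then a Yetter-Drinfeld submodule of $V$; it is nonzero and finite-dimensional (of dimension $k+1$), so by simplicity $V = V(v,k)$, contradicting $\dim_\k V = \infty$. Therefore $\{x^k \cdot v \mid k \geq 0\}$ is linearly independent, and combined with the spanning property it is a basis of $V$. In fact this last paragraph is exactly the contrapositive of Lemma~\ref{lem:standbasis} applied in the infinite-dimensional setting: if any initial segment were maximal-independent, $V$ would be finite-dimensional.

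I do not expect any serious obstacle here: the argument is a direct consequence of the submodule-generation properties already established in Proposition~\ref{pro:spaniscomodule}, and the only subtlety is keeping the two halves (spanning via $V(v,\infty)$, independence via the contradiction with $V(v,k)$ being a proper finite-dimensional submodule) cleanly separated. One point worth stating explicitly is that a standard element is nonzero by definition (Definition~\ref{def:standardelement}), so all the submodules in play are genuinely nonzero and simplicity can be invoked.
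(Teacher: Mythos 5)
Your proposal is correct and follows essentially the same route as the paper: spanning is obtained from Proposition~\ref{pro:spaniscomodule}(4) together with simplicity, and linear independence is proved by contradiction, noting that a first linear dependency would make $V(v,k)$ a nonzero finite-dimensional Yetter-Drinfeld submodule, hence all of $V$ by simplicity, contradicting $\dim_\k V=\infty$. No gaps; the extra remarks about $v\neq 0$ and the analogy with Lemma~\ref{lem:standbasis} are consistent with the paper's argument.
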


\begin{proof}
Since $V$ is simple, Proposition \ref{pro:spaniscomodule}(4) implies $V = V(v,\infty)$. To complete the proof, it suffices to show that the set $\{x^k \cdot v \mid k \geq 0\}$ is linearly independent. 

Suppose, for contradiction, that this set is linearly dependent. Then there exists some $k \geq 0$ such that $x^{k+1} \cdot v \in V(v,k)$. By Proposition \ref{pro:spaniscomodule}(3), $V(v,k)$ forms a Yetter-Drinfeld submodule of $V$. Since $V$ is simple, this implies $V = V(v,k)$. However, this contradicts the assumption that $V$ is infinite-dimensional, as $V(v,k)$ is finite-dimensional by construction.	
\end{proof}

\begin{remark}
{\normalfont 
The bases in Lemmas \ref{thm:standbasis} and \ref{thm:standbasisinfinite} are referred to as} standard bases. 
\end{remark}

The action and coaction of a simple Yetter-Drinfeld module admit a simple form with respect to a standard basis. We take the case of finite-dimensional simple Yetter-Drinfeld modules as an example to illustrate this.

Let  $ V $  be a simple Yetter-Drinfeld module over $H$ of dimension  $ p+1 $  for some  $ p \geq 0 $. Fix a standard element  $ v \in V_{[j]}^{[i]} $  with $  i, j \in \mathbb{Z} $. Then the set $\{v, x\cdot v, \dots, x^p \cdot v\}$ forms a standard basis of $  V  $. Denoting  $ v_k = x^k \cdot v $  for  $ 0 \leq k \leq p $, the action and coaction on this basis are given by:

\begin{itemize}
	\item The action of  $ g $  is given for $  0 \leq k \leq p $ by
	\begin{equation*}
		g\cdot v_k=\xi^{j-k} v_k.
	\end{equation*}
	\item The action of  $ x $  is given for $  0 \leq k \leq p $ by
	\begin{equation}\label{standbasisactioncoaction}
		x\cdot v_k=
		\begin{cases}
			v_{k+1} , & 0\leq k<p  ,\\[5pt]
			\sum_{l=0}^{p} a_l v_l    ,   & k=p  ,
		\end{cases}
	\end{equation}
    where $ a_l \in \k $ for $ 0\leq l \leq p $.
	\item The coaction  is given for $  0 \leq k \leq p $ by
	\begin{equation*}
		\delta(v_k) =\sum\limits_{l=0}^{k} c_{k,l} \otimes v_l,
	\end{equation*}
    where the coefficients  $ c_{k,l} $  are defined recursively by Equation~\eqref{coefficientckl}, with initial condition  $ c_{0,0} = g^i $.
\end{itemize}

\subsection{The Comatrix Relative to a Standard Basis}

The preceding subsection concluded with a description of the action and coaction on a simple Yetter-Drinfeld module relative to a standard basis. We now examine the converse problem: given arbitrary parameters $ i,j\in \Z $, $ p\geq 0 $, and scalars $ a_l \in \k $ ($ 0\leq l \leq p $), let $ V $ be a vector space of dimension $ p+1 $ with basis $\{v_0, v_1, \dots, v_p\}$, endowed with the action and coaction defined in \eqref{standbasisactioncoaction}.  A natural question is whether $ V $ thereby becomes a Yetter-Drinfeld module over $ H $.

In general, the answer is negative. A necessary condition for $ V $ to be an $H$-module is that $ x\cdot v_k \in V_{[j-(k+1)]} $, which forces $ a_l=0 $ whenever $ j-l \notin [j-(k+1)] $. Moreover, the parameters $ p $ and $ a_l $  must be chosen to satisfy the compatibility conditions \eqref{compatibilitycondition}.

Although $ V $ may not be a full Yetter-Drinfeld module, we will prove that it always admits an $ H $-comodule structure. Our strategy is to compute the coefficient matrix of the coaction explicitly and verify that it is a comatrix. The entries of this matrix are defined recursively by Equation \eqref{coefficientckl}, as we now formalize.

\begin{definition}\label{def:ckl}
For $i,j \in \mathbb{Z}$, define coefficients $c_j^i(k,l) \in H$ recursively as follows: set $c_j^i(0,0) := g^i$ and for $k \geq 0$, $0 \leq l \leq k+1$:
\begin{equation*}
	c_j^i(k+1,l):=\begin{cases}
		c_j^i(k,0)S(x)+\xi^{tj}xc_j^i(k,0)S(g^t),     & l=0 ,\\[5pt]
		c_j^i(k,l)S(x)+\xi^{t(j-l)}xc_j^i(k,l)S(g^t) +    c_j^i(k,l-1)S(g^t),     \qquad    &     0<l<k+1,  \\[5pt]
		c_j^i(k,k) S(g^t)  ,         &     l=k+1.
	\end{cases}
\end{equation*} 
For $p \geq 0$, define the $(p+1) \times (p+1)$ lower triangular matrix $\mathcal{A}_j^i(p)$ with entries:
\[
\big( \mathcal{A}_j^i(p) \big)_{k,l} = 
\begin{cases} 
	c_j^i(k,l) , & 0 \leq l \leq k \leq p , \\
	0 , & \text{otherwise}.
\end{cases}
\]
Explicitly:
\[
\mathcal{A}_j^i(p) = 
\begin{pmatrix}
	c_j^i(0,0) & 0 & \cdots & 0 \\
	c_j^i(1,0) & c_j^i(1,1) & \cdots & 0 \\
	\vdots & \vdots & \ddots & \vdots \\
	c_j^i(p,0) & c_j^i(p,1) & \cdots & c_j^i(p,p)
\end{pmatrix}.
\]
\end{definition}
The following properties are immediate from the definitions.
\begin{remark}\hfill
{\normalfont 
\begin{itemize}
\item[(1)]  If  $ i_1 \equiv i_2 \pmod n  $ and  $ j_1 \equiv j_2 \pmod N  $, then $ \A_{j_1}^{i_1}(p) =\A_{j_2}^{i_2}(p)  $ for all $ p\geq 0 $.
\item[(2)] For any Yetter-Drinfeld module $V$ over $H$ and $v \in V_{[j]}^{[i]}$,
\begin{equation}\label{deltax^kv}
	\delta(x^k\cdot v)=\sum\limits_{l=0}^k c_j^i(k,l) \otimes x^l \cdot v 
\end{equation}
holds for all $k \geq 0$.
\end{itemize}}
\end{remark}

To express the entries $c_j^i(k,l)$ explicitly, we define the following families over $\k$.

\begin{definition}
For $i,j \in \mathbb{Z}$, define the following coefficient families in $\k$:
	\begin{itemize}
		\item[(1)] For $k \geq 0$ and $0 \leq l \leq k$:
		 \begin{equation*}
		  R_j^i(k,l):=\begin{cases}
			\xi^{t(j-l)}-\xi^{t(k-1)-i},  \qquad  & 0\leq l <k, \\[5pt]
			1 ,          &     l=k .
		             \end{cases}
	     \end{equation*} 
		
		\item[(2)] Set $\lambda_j^i(0,0) := 1$, and for $k \geq 1$, $0 \leq l \leq k$:
        \begin{equation*}
        	\lambda_j^i(k,l):=\begin{cases}
        		R_j^i(k,0)\lambda_j^i(k-1,0) , & l=0, \\[5pt]
        		R_j^i(k,l)\lambda_j^i(k-1,l)+\lambda_j^i(k-1,l-1) , \qquad  & 0< l <k , \\[5pt]
        		1  ,         &     l=k .
        	                  \end{cases}
        \end{equation*} 
	\end{itemize}
\end{definition}

The following equality holds immediately.

\begin{proposition}\label{pro:cijformula}
For  $i, j \in \mathbb{Z}$ and $0 \leq l \leq k$, $ c_j^i(k,l)=\lambda_j^i(k,l)x^{k-l}g^{i-kt} $.
\end{proposition}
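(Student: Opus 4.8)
The plan is to prove the closed formula $c_j^i(k,l)=\lambda_j^i(k,l)x^{k-l}g^{i-kt}$ by induction on $k$, tracking simultaneously the statement for all $0\leq l\leq k$. The base case $k=0$ is immediate since $c_j^i(0,0)=g^i=\lambda_j^i(0,0)x^0g^{i-0\cdot t}$. For the inductive step, assume the formula holds for $k$; I then substitute the inductive expressions $c_j^i(k,l)=\lambda_j^i(k,l)x^{k-l}g^{i-kt}$ into the three-case recursion of Definition~\ref{def:ckl} defining $c_j^i(k+1,l)$, using the antipode values $S(x)=-xg^{-t}$ and $S(g^t)=g^{-t}$ together with the commutation relation $xg=\xi gx$ (equivalently $g^ax^b=\xi^{-ab}x^bg^a$) to move all powers of $g$ to the right of all powers of $x$. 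The target expression for $c_j^i(k+1,l)$ is $\lambda_j^i(k+1,l)x^{k+1-l}g^{i-(k+1)t}$, so after normalizing each term to the form (scalar)$\cdot x^{k+1-l}g^{i-(k+1)t}$, I need only match scalar coefficients with the recursion defining $\lambda_j^i(k+1,l)$.

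The heart of the computation is the middle case $0<l<k+1$. Writing $c_j^i(k,l)=\lambda_j^i(k,l)x^{k-l}g^{i-kt}$, the first summand $c_j^i(k,l)S(x)=-\lambda_j^i(k,l)x^{k-l}g^{i-kt}xg^{-t}$; commuting $g^{i-kt}$ past $x$ produces a factor $\xi^{-(i-kt)}$, giving $-\xi^{-(i-kt)}\lambda_j^i(k,l)x^{k-l+1}g^{i-(k+1)t}$. The second summand $\xi^{t(j-l)}xc_j^i(k,l)S(g^t)=\xi^{t(j-l)}\lambda_j^i(k,l)x^{k-l+1}g^{i-(k+1)t}$ directly. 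Adding these two yields coefficient $\bigl(\xi^{t(j-l)}-\xi^{t(k-1)-i}\bigr)\lambda_j^i(k,l)=R_j^i(k,l)\lambda_j^i(k,l)$, where I used $-\xi^{-(i-kt)}=-\xi^{kt-i}$ and need $kt-i \equiv t(k-1)-i+t \pmod n$, which holds since $\xi^n=1$ — more precisely $\xi^{kt-i}=\xi^{t(k-1)-i}\cdot\xi^{t}$, so I should double-check the bookkeeping: actually $-\xi^{-(i-kt)}\lambda_j^i(k,l)x^{k-l+1}g^{i-(k+1)t}$ has coefficient $-\xi^{kt-i}$, and since $R_j^i(k,l)=\xi^{t(j-l)}-\xi^{t(k-1)-i}$ with $t(k-1)-i = tk - t - i$, I get $\xi^{tk-t-i}$, which differs from $\xi^{tk-i}$ by $\xi^{-t}$; so the exponent that actually appears must be reconciled with the definition, and I expect the recursion index shift ($c_j^i(k+1,\cdot)$ involves $R_j^i(k+1,\cdot)$ or $R_j^i(k,\cdot)$) resolves this cleanly. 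The third summand $c_j^i(k,l-1)S(g^t)=\lambda_j^i(k,l-1)x^{k-l+1}g^{i-kt}g^{-t}=\lambda_j^i(k,l-1)x^{k+1-l}g^{i-(k+1)t}$ contributes coefficient $\lambda_j^i(k,l-1)$. Summing the three matches $R_j^i(k+1,l)\lambda_j^i(k,l)+\lambda_j^i(k,l-1)$, and by the definition of $\lambda_j^i(k+1,l)$ this is exactly $\lambda_j^i(k+1,l)$, as required. The boundary cases $l=0$ (only first two summands) and $l=k+1$ (only the $S(g^t)$ term) are handled the same way with $\lambda_j^i(k+1,0)=R_j^i(k+1,0)\lambda_j^i(k,0)$ and $\lambda_j^i(k+1,k+1)=1$.

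The main obstacle I anticipate is purely one of exponent bookkeeping: correctly tracking the index shift between level $k$ and level $k+1$ in the definitions of $R_j^i$ and $\lambda_j^i$, and verifying that the scalar $-\xi^{kt-i}$ produced by commuting $g^{i-kt}$ past a single $x$ matches $-\xi^{t((k+1)-1)-i}=-\xi^{tk-i}$ appearing in $R_j^i(k+1,l)$ — which it does exactly, with no off-by-$t$ discrepancy, once the recursion is read with the correct index. I would present the induction cleanly: state the claim $P(k)$: ``for all $0\leq l\leq k$, $c_j^i(k,l)=\lambda_j^i(k,l)x^{k-l}g^{i-kt}$'', verify $P(0)$, and then carry out the three-case verification of $P(k)\Rightarrow P(k+1)$ as above, invoking only the defining recursions, the antipode formulas of Definition~\ref{def:infinite-Taft}, and the relation $xg=\xi gx$. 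No deeper structural input is needed; the result is a direct computation organized by induction.
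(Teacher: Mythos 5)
Your proposal is correct and follows essentially the same route as the paper: induction on $k$, substitution of the inductive formula into the three-case recursion of Definition~\ref{def:ckl}, normalization via $S(x)=-xg^{-t}$, $S(g^t)=g^{-t}$ and $g^a x=\xi^{-a}xg^a$, and matching of the resulting scalar with the recursion defining $\lambda_j^i(k+1,l)$. The index question you raise mid-argument resolves exactly as you conclude: the coefficient $-\xi^{tk-i}$ equals $-\xi^{t((k+1)-1)-i}$, i.e.\ the term appearing in $R_j^i(k+1,l)$, which is the index the recursion for $\lambda_j^i(k+1,l)$ actually calls for.
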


\begin{proof}
We proceed by induction on $ k $. The base case $k=0$ holds by definition. Assume inductively that  $ c_j^i(k,l)=\lambda_j^i(k,l)x^{k-l}g^{i-kt} $ for all $0 \leq l \leq k$. 

For $l = k+1$, Definition \ref{def:ckl} immediately yields $ c_j^i(k+1,k+1)=g^{i-(k+1)t} $. For $0 < l < k+1$, we have
\begin{eqnarray*}
	& & c_j^i(k+1,l) \\
	&=&c_j^i(k,l)S(x)+\xi^{t(j-l)}xc_j^i(k,l)S(g^t) +    c_j^i(k,l-1)S(g^t)  \\[5pt]
	&=&\left(\lambda_j^i(k,l)x^{k-l}g^{i-kt}\right) S(x)+\xi^{t(j-l)}x\left(\lambda_j^i(k,l)x^{k-l}g^{i-kt}\right)S(g^t)+\left(\lambda_j^i(k,l-1)x^{k-l+1}g^{i-kt}\right)S(g^t)  \\[5pt]
	&=&\left(-\xi^{tk-i} \lambda_j^i(k,l)+\xi^{t(j-l)}\lambda_j^i(k,l) + \lambda_j^i(k,l-1) \right)x^{k+1-l}g^{i-(k+1)t} \\[5pt]
	&=&\left(R_j^i(k+1,l) \lambda_j^i(k,l) + \lambda_j^i(k,l-1) \right)x^{k+1-l}g^{i-(k+1)t} \\[5pt]
	&=&\lambda_j^i(k+1,l)x^{k+1-l}g^{i-(k+1)t}. 
\end{eqnarray*}

Similarly, $ c_j^i(k+1,0)=\lambda_j^i(k+1,0)x^{k+1}g^{i-(k+1)t} $   follows by analogous computation. This completes the inductive step and the proof.	
\end{proof}

We now verify that $\mathcal{A}_j^i(m)$ admits a comatrix structure, using the following characterization.

\begin{lemma}\label{lem:comatrixcriteration}
For  $ i,j\in \Z $ and $ m\geq 0 $, the following are equivalent:
\begin{itemize}
	\item [(1)] $ \A_j^i(m) $ is a comatrix.
	
	\item [(2)] For all $ 0\leq l\leq k \leq m $ and $ 0\leq p \leq k-l $, 
	\begin{equation*}
		\lambda_j^i(k,p+l)\lambda_j^i(p+l,l)=\binom{k-l}{p}_{\xi^t} \xi^{-t(k-(p+l))p}\lambda_j^i(k,l) .
	\end{equation*}
\end{itemize}
	
\end{lemma}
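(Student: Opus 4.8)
### Proof proposal

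The plan is to prove the equivalence by working directly with the definition of a comatrix, namely that $\A_j^i(m)$ is a comatrix iff $\Delta(c_j^i(k,l)) = \sum_{p} c_j^i(k,p)\otimes c_j^i(p,l)$ for all $0\le l\le k\le m$ (the counit condition $\varepsilon(c_j^i(k,l)) = \delta_{k,l}$ being immediate from Proposition~\ref{pro:cijformula}, since $\varepsilon(x^{k-l}g^{i-kt}) = \delta_{k,l}$). Using Proposition~\ref{pro:cijformula}, I would substitute $c_j^i(k,l) = \lambda_j^i(k,l)\,x^{k-l}g^{i-kt}$ and compute both sides as elements of $H\otimes H$ in the PBW basis $\{g^a x^b\}$. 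Since the sum on the right runs over $l\le p\le k$, write $p = l + q$ with $0\le q\le k-l$; the $(k,l)$-entry of the comatrix condition becomes a sum over $q$ of terms $\lambda_j^i(k, l+q)\lambda_j^i(l+q, l)\, \bigl(x^{k-l-q}g^{i-kt}\bigr)\otimes\bigl(x^{q}g^{i-(l+q)t}\bigr)$.

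The key computational step is to expand $\Delta(x^{k-l}g^{i-kt})$ using the Hopf algebra structure of $H$. Since $\Delta(g) = g\otimes g$ and $\Delta(x) = x\otimes g^t + 1\otimes x$ with $xg = \xi gx$, a standard quantum-binomial computation gives
\begin{equation*}
\Delta(x^{N}) = \sum_{q=0}^{N}\binom{N}{q}_{\xi^t} x^{N-q}g^{tq}\otimes x^{q},
\end{equation*}
hence $\Delta(x^{k-l}g^{i-kt}) = \sum_{q=0}^{k-l}\binom{k-l}{q}_{\xi^t} x^{k-l-q}g^{tq}g^{i-kt}\otimes x^{q}g^{i-kt}$. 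Comparing the coefficient of the basis element $\bigl(x^{k-l-q}g^{*}\bigr)\otimes\bigl(x^{q}g^{*}\bigr)$ on the two sides — after normalizing the group-like parts, where one uses $g^{tq}g^{i-kt} = g^{i-(k-q)t}$ on the left tensor factor and must match $g^{i-(l+q)t}$ on the right tensor factor coming from $c_j^i(l+q,l)$, introducing a power of $\xi$ from moving $g$'s past $x$'s — one sees that the comatrix identity $\Delta(c_j^i(k,l)) = \sum_q c_j^i(k,l+q)\otimes c_j^i(l+q,l)$ is equivalent, coefficient by coefficient in $q$, precisely to
\begin{equation*}
\lambda_j^i(k,l+q)\lambda_j^i(l+q,l) = \binom{k-l}{q}_{\xi^t}\,\xi^{-t(k-(l+q))q}\,\lambda_j^i(k,l),
\end{equation*}
which is statement (2) after renaming $q$ to $p$.

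The main obstacle I anticipate is bookkeeping the powers of $\xi$ correctly: one must carefully track the scalar picked up when commuting $g^{tq}$ past $x^{k-l-q}g^{i-kt}$ to rewrite the left tensor factor in the normal form $\lambda_j^i(k,l+q)\,x^{k-l-q}g^{i-(k-q)t}$ needed to identify it with $c_j^i(k,l+q)$, and similarly to match the right factor with $c_j^i(l+q,l) = \lambda_j^i(l+q,l)\,x^q g^{i-(l+q)t}$. The factor $\xi^{-t(k-(l+q))q}$ in (2) is exactly this commutation scalar (note $g^{i-kt} = g^{i-(l+q)t}\cdot g^{-(k-l-q)t}$ and $x^{k-l-q}g^{-(k-l-q)t}$ versus the target ordering), so the whole proof reduces to one clean application of $xg=\xi gx$ together with the $q$-binomial expansion of $\Delta(x^N)$. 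Once the exponents are pinned down, no induction is needed — it is a direct identification of coefficients in the PBW basis of $H\otimes H$.
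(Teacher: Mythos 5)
Your proposal follows essentially the same route as the paper: substitute $c_j^i(k,l)=\lambda_j^i(k,l)x^{k-l}g^{i-kt}$, expand $\Delta(c_j^i(k,l))$ via the $q$-binomial theorem in $H\otimes H$, and compare coefficients in the PBW basis against $\sum_p c_j^i(k,p)\otimes c_j^i(p,l)$; the target identity and the origin of the scalar $\xi^{-t(k-(p+l))p}$ are correctly identified. One bookkeeping item to fix before executing: with the paper's convention $\Delta(x)=x\otimes g^t+1\otimes x$, the expansion is $\Delta(x^{N})=\sum_{q}\binom{N}{q}_{\xi^t}\,x^{N-q}\otimes g^{t(N-q)}x^{q}$ (the group-like sits in the \emph{second} leg, not the first as you wrote), and $c_j^i(k,l+q)=\lambda_j^i(k,l+q)\,x^{k-(l+q)}g^{i-kt}$ has $g$-exponent $i-kt$ independent of $q$, so the left tensor factor needs no normalization at all; the commutation scalar comes entirely from moving $g^{t(k-l-q)}$ past $x^{q}$ in the second factor, exactly as in the paper's proof.
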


\begin{proof}
Set $c_j^i(k,l) = 0$ for $l > k$. We prove (1) $\Rightarrow$ (2), as the converse follows similarly.	
Fix $0 \leq l \leq k \leq m$. By the comatrix property:	
\begin{eqnarray*}
	\Delta(c_j^i(k,l)) &=&\sum\limits_{p=0}^m c_j^i(k,p) \otimes c_j^i(p,l)\\
	&=&\sum\limits_{p=l}^k c_j^i(k,p) \otimes c_j^i(p,l) \\
	&=&\sum\limits_{p=l}^k \lambda_j^i(k,p) \lambda_j^i(p,l) x^{k-p}g^{i-kt} \otimes x^{p-l}g^{i-pt}. 
\end{eqnarray*}
Alternatively, expanding $\Delta(c(k,l))$ directly:
\begin{eqnarray*}
	& &\Delta(c_j^i(k,l))  \\[5pt]
	&=&\lambda_j^i(k,l) \Delta(x)^{k-l} \Delta(g)^{i-kt} \\[5pt]
	&=&\lambda_j^i(k,l) (x\otimes g^t +1 \otimes x)^{k-l}  (g\otimes g)^{i-kt}\\[5pt]
	&=&\lambda_j^i(k,l) \left(\sum\limits_{p=0}^{k-l} \binom{k-l}{p}_{\xi^t}(x\otimes g^t)^{k-l-p} (1\otimes x)^p \right)  (g^{i-kt}\otimes g^{i-kt})\\[5pt]
	&=&\sum\limits_{p=0}^{k-l} \lambda_j^i(k,l) \binom{k-l}{p}_{\xi^t} \xi^{-t(k-(p+l))p} \left(x^{k-(p+l)}g^{i-kt} \otimes x^p g^{i-(p+l)t} \right).  
\end{eqnarray*}
Comparing coefficients of $x^{k-(p+l)} g^{i-kt} \otimes x^p g^{i-(p+l)t}$ for any $0 \leq p \leq k-l$:
\begin{equation}\label{lambdaequality}
\lambda_j^i(k,p+l)\lambda_j^i(p+l,l)=\binom{k-l}{p}_{\xi^t} \xi^{-t(k-(p+l))p}\lambda_j^i(k,l).
\end{equation}
This establishes (2).
\end{proof}

Setting $l=0$ in Equation \eqref{lambdaequality} yields 
\begin{equation*}
\lambda_j^i(k,p)=\binom{k}{p}_{\xi^t} \xi^{-t(k-p)p} \dfrac{\lambda_j^i(k,0)}{\lambda_j^i(p,0)}   .
\end{equation*}
This explicit decomposition motivates the following  expression for $\lambda_j^i(k,p)$, where we adopt the convention that $\prod_{l=p+1}^{k} R_j^{i}(l,0) = 1$ when $p = k$ (empty product).

\begin{proposition}\label{prop:explictlambda}
 For all  $i,j\in \Z$ and $0\leq p\leq k  $, 
\begin{equation}\label{explictlambda}
	\lambda_j^i (k,p) = \binom{k}{p}_{\xi^t} \xi^{-t(k-p)p} \prod\limits_{l=p+1}^{k} R_j^{i}(l,0).
\end{equation}
\end{proposition}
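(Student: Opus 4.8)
The plan is to prove the explicit formula \eqref{explictlambda} by induction on $k$, using the recursive definition of $\lambda_j^i(k,p)$ directly rather than relying on Lemma~\ref{lem:comatrixcriteration} (which has not yet been shown to apply for all $m$). The base case $k=0$ is immediate since $\lambda_j^i(0,0)=1$ and the right-hand side is $\binom{0}{0}_{\xi^t}\xi^0 \cdot 1 = 1$ (empty product). For the inductive step, assume \eqref{explictlambda} holds for $k$; I will verify it for $k+1$ by checking the three cases in the recursive definition of $\lambda_j^i(k+1,p)$.

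The cases $p=k+1$ and $p=0$ are straightforward: for $p=k+1$ both sides equal $1$, and for $p=0$ the recursion gives $\lambda_j^i(k+1,0)=R_j^i(k+1,0)\lambda_j^i(k,0)$, which matches $\prod_{l=1}^{k+1}R_j^i(l,0)$ by the inductive hypothesis (note $\binom{k+1}{0}_{\xi^t}\xi^0 = 1$). The main work is the case $0<p<k+1$, where
\[
\lambda_j^i(k+1,p) = R_j^i(k+1,p)\lambda_j^i(k,p) + \lambda_j^i(k,p-1).
\]
Substituting the inductive hypothesis for both terms on the right, factoring out $\prod_{l=p+1}^{k}R_j^i(l,0)$ where possible, and collecting, the identity reduces to a purely scalar identity among $q$-binomial coefficients (with $q=\xi^t$), namely something of the shape
\[
\binom{k+1}{p}_{\xi^t}\xi^{-t(k+1-p)p}R_j^i(k+1,0) \ \overset{?}{=}\ \binom{k}{p}_{\xi^t}\xi^{-t(k-p)p}R_j^i(k+1,p) + \binom{k}{p-1}_{\xi^t}\xi^{-t(k-p+1)(p-1)}.
\]
Here I must use the explicit form $R_j^i(k+1,p) = \xi^{t(j-p)} - \xi^{tk-i}$ versus $R_j^i(k+1,0)=\xi^{tj}-\xi^{tk-i}$, so that $R_j^i(k+1,p)$ decomposes as $\xi^{-tp}\bigl(\xi^{tj}\bigr) \cdot \xi^{tp}\cdot$(something) — more precisely writing $R_j^i(k+1,p) = \xi^{-tp}R_j^i(k+1,0) + \xi^{tj}(1-\xi^{-tp})$ or a similar rearrangement that separates the "$R(k+1,0)$-part" from a "$\xi^{tj}$-correction part". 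After this split, the $R_j^i(k+1,0)$-terms on both sides should match via the Pascal-type recursion $\binom{k+1}{p}_{q} = q^{p}\binom{k}{p}_q + \binom{k}{p-1}_q$ (or its mirror), and the leftover $\xi^{tj}$-terms should cancel against the $\lambda_j^i(k,p-1)$-contribution using the complementary $q$-Pascal identity.

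The main obstacle is bookkeeping: getting the exponents of $\xi^t$ in the two $q$-Pascal identities and in the $R$-splitting to line up exactly, since there are competing powers $\xi^{-t(k-p)p}$, $\xi^{-t(k-p+1)(p-1)}$, and the shift from $\xi^{tj}$ vs $\xi^{t(j-p)}$. I would organize this by dividing through by the common factor $\xi^{-t(k+1-p)p}\prod_{l=p+1}^{k}R_j^i(l,0)$ early, reducing everything to an identity in the single variable $q=\xi^t$ together with the two parameters $\xi^{tj}$ and $\xi^{tk-i}$ treated as formal symbols, and then verifying that identity by the standard $q$-binomial recursions. Alternatively, one can shortcut the whole computation by first establishing Lemma~\ref{lem:comatrixcriteration} independently (the comatrix property of $\A_j^i(m)$ can be checked directly from Proposition~\ref{pro:cijformula} and the coalgebra axioms without circularity), then extracting \eqref{explictlambda} as the $l=0$ specialization of \eqref{lambdaequality} exactly as indicated in the paragraph preceding the statement; I would mention this as the cleaner route if the direct induction proves too unwieldy.
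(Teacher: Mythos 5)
Your primary route --- induction on $k$ via the recursion, with the case $0<p<k+1$ reduced to the two $q$-Pascal identities after splitting $R_j^i(k+1,p)$ into a piece proportional to $R_j^i(k+1,0)$ and a correction --- is exactly the paper's proof and it goes through; the only slip is that your displayed ``shape'' identity drops the factor $R_j^{i}(p,0)=\xi^{tj}-\xi^{t(p-1)-i}$ from the $\binom{k}{p-1}_{\xi^t}$ term (it arises because the inductive hypothesis for $\lambda_j^i(k,p-1)$ carries $\prod_{l=p}^{k}R_j^{i}(l,0)$ rather than $\prod_{l=p+1}^{k}R_j^{i}(l,0)$), and that very factor is what supplies both the $\xi^{tj}$-piece and the $\xi^{tk-i}$-piece needed for the two Pascal identities to close. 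I would drop the suggested fallback via Lemma~\ref{lem:comatrixcriteration}: extracting \eqref{explictlambda} from \eqref{lambdaequality} at $l=0$ requires dividing by $\lambda_j^i(p,0)$, which can vanish, and the paper's only proof that $\A_j^{i}(m)$ is a comatrix (Corollary~\ref{cor:Aijisacomatrix}) itself relies on this proposition, so that route is circular as described.
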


\begin{proof}
We proceed by induction on $k$. The base case $k=0$ holds by definition. 
Assume \eqref{explictlambda} holds for all $0 \leq p \leq k$. For $k+1$ and $0 \leq p \leq k+1$:	
\begin{itemize}
	\item When $p=0$ or $p=k+1$, \eqref{explictlambda} holds by definition.
	
	\item For $0 < p < k+1$, the recursive definition gives:

\end{itemize}	
\begin{eqnarray*}
	& &\lambda_j^{i}(k+1,p)   \\[5pt]
	&=&R_j^{i}(k+1,p)\lambda_j^{i}(k,p)+\lambda_j^{i}(k,p-1)   \\[5pt]
	&=&(\xi^{t(j-p)}-\xi^{tk-i})\binom{k}{p}_{\xi^t} \xi^{-t(k-p)p} \prod\limits_{l=p+1}^{k}R_j^{i}(l,0) +\binom{k}{p-1}_{\xi^t} \xi^{-t(k-p+1)(p-1)} \prod\limits_{l=p}^{k}R_j^{i}(l,0)  \\[5pt]
	&=&\xi^{-t(k+1-p)p} \left(\prod\limits_{l=p+1}^{k} R_j^{i}(l,0)  \right)  \left( (\xi^{t(j-p)}-\xi^{tk-i}) \xi^{tp} \binom{k}{p}_{\xi^t}   +  \right.  \\[5pt]
	& &  \left.  (\xi^{tj}-\xi^{t(p-1)-i}) \xi^{t(k+1-p)} \binom{k}{p-1}_{\xi^t}  \right)
	\\[5pt]
	&=&\xi^{-t(k+1-p)p} \left(\prod\limits_{l=p+1}^{k} R_j^{i}(l,0)  \right) \left(\xi^{tj} \left(\binom{k}{p}_{\xi^t}+ \xi^{t(k+1-p)}\binom{k}{p-1}_{\xi^t} \right) - \right.    \\[5pt]
	& &  \left.  \xi^{tk-i} \left(\xi^{tp}\binom{k}{p}_{\xi^t}+ \binom{k}{p-1}_{\xi^t}  \right)   \right)
	\\[5pt]
	&=& \xi^{-t(k+1-p)p}  \left(  \prod\limits_{l=p+1}^{k} R_j^{i}(l,0) \right)  \left(\xi^{tj}  \binom{k+1}{p}_{\xi^t}- \xi^{tk-i} \binom{k+1}{p}_{\xi^t} \right)
	\\[5pt]
	&=& \binom{k+1}{p}_{\xi^t} \xi^{-t(k+1-p)p} \prod\limits_{l=p+1}^{k+1} R_j^{i}(l,0).
\end{eqnarray*}
This completes the induction.	
\end{proof}
\begin{remark}
{\normalfont 
The coefficients $ \lambda_j^i (k,p) $ are closely related to the coefficients $ \beta_{k-p,k}^{i,j} $ introduced in  
\cite{MBG21}. For details, see the proof of Lemma \ref{lem:isoasbraidedspace} of this paper.}
\end{remark}
We now establish that $ \A_j^{i}(m) $ admits a comatrix structure.
\begin{corollary}\label{cor:Aijisacomatrix}
For all $ i,j\in \Z $ and $ m\geq 0 $, $ \A_j^{i}(m) $ is a comatrix.
\end{corollary}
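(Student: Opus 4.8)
The plan is to verify condition (2) of Lemma~\ref{lem:comatrixcriteration}, that is, to check that for all $0 \leq l \leq k \leq m$ and $0 \leq p \leq k-l$,
\begin{equation*}
\lambda_j^i(k,p+l)\lambda_j^i(p+l,l)=\binom{k-l}{p}_{\xi^t} \xi^{-t(k-(p+l))p}\lambda_j^i(k,l).
\end{equation*}
Since $m$ enters only through the range of indices, it suffices to prove this identity for arbitrary $0 \leq l \leq k$ and $0 \leq p \leq k-l$, with no upper bound; the statement for a fixed $m$ then follows immediately. The crucial observation is that Proposition~\ref{prop:explictlambda} gives a closed form for each $\lambda_j^i(k,p)$, namely $\lambda_j^i(k,p) = \binom{k}{p}_{\xi^t} \xi^{-t(k-p)p} \prod_{s=p+1}^{k} R_j^i(s,0)$, so the identity to be checked becomes an identity purely among Gaussian binomial coefficients, powers of $\xi^t$, and products of the scalars $R_j^i(s,0)$.

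First I would substitute the closed form from Proposition~\ref{prop:explictlambda} into all three occurrences of $\lambda$. On the left-hand side the two factors contribute $\binom{k}{p+l}_{\xi^t}\binom{p+l}{l}_{\xi^t}$ together with the product $\left(\prod_{s=p+l+1}^{k} R_j^i(s,0)\right)\left(\prod_{s=l+1}^{p+l} R_j^i(s,0)\right) = \prod_{s=l+1}^{k} R_j^i(s,0)$, which is exactly the product appearing in $\lambda_j^i(k,l)$ on the right-hand side. Thus the $R_j^i$-products cancel cleanly and the whole identity reduces to the purely combinatorial relation
\begin{equation*}
\binom{k}{p+l}_{\xi^t}\binom{p+l}{l}_{\xi^t}\,\xi^{-t(k-p-l)(p+l)}\xi^{-tpl}
=\binom{k-l}{p}_{\xi^t}\binom{k}{l}_{\xi^t}\,\xi^{-t(k-l-p)p}\,\xi^{-t(k-p-l)p}\,\cdots
\end{equation*}
after carefully collecting the exponents of $\xi^t$; I would then invoke the standard trinomial-revision identity $\binom{k}{p+l}_q\binom{p+l}{l}_q = \binom{k}{l}_q\binom{k-l}{p}_q$ for Gaussian binomials and reconcile the remaining powers of $\xi^t$ by direct exponent arithmetic. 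Alternatively, and perhaps more cleanly, I would set $l=0$ first (the displayed consequence just before Proposition~\ref{prop:explictlambda} already records $\lambda_j^i(k,p)=\binom{k}{p}_{\xi^t}\xi^{-t(k-p)p}\lambda_j^i(k,0)/\lambda_j^i(p,0)$), observe that the general case factors through the $l=0$ case by the same cancellation of $R$-products, and then the identity for general $l$ is an immediate formal consequence.

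The main obstacle is not conceptual but bookkeeping: getting the exponents of $\xi^t$ to match requires care, since three separate $\binom{\cdot}{\cdot}_{\xi^t}\xi^{-t(\cdot)(\cdot)}$ factors are being combined and the Gaussian trinomial identity itself carries no extra power of $q$, so every stray exponent must be accounted for by hand. I expect that once the $R_j^i(s,0)$-products are cancelled — which is the one genuinely structural step — the remaining verification is a routine (if slightly tedious) application of the $q$-trinomial identity together with the elementary check that $t(k-p-l)(p+l)+tpl = t(k-l)p + t(k-p-l)p + \cdots$ balances; I would simply state the relevant Gaussian-binomial identity, perform the exponent comparison, and conclude that condition (2) of Lemma~\ref{lem:comatrixcriteration} holds, hence $\A_j^i(m)$ is a comatrix.
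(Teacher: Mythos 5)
Your proposal is correct and follows essentially the same route as the paper: substitute the closed form of $\lambda_j^i(k,p)$ from Proposition~\ref{prop:explictlambda}, observe that the $R_j^i(s,0)$-products telescope to $\prod_{s=l+1}^{k}R_j^i(s,0)$, and then apply the $q$-trinomial identity $\binom{k}{p+l}_{\xi^t}\binom{p+l}{l}_{\xi^t}=\binom{k-l}{p}_{\xi^t}\binom{k}{l}_{\xi^t}$ together with the exponent check $(k-p-l)(p+l)+pl=(k-p-l)p+(k-l)l$ to verify condition (2) of Lemma~\ref{lem:comatrixcriteration}. This is exactly the paper's argument.
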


\begin{proof}
Fix $0 \leq l \leq k \leq m$ and $0 \leq p \leq k-l$. By Proposition \ref{prop:explictlambda},
\begin{eqnarray*}
	& &	\lambda_j^i(k,p+l)\lambda_j^i(p+l,l)    \\[5pt]
	&=& \left(\binom{k}{p+l}_{\xi^t} \xi^{-t(k-(p+l))(p+l)} \prod\limits_{s=p+l+1}^{k} R_j^{i}(s,0) \right)  \left(\binom{p+l}{l}_{\xi^t} \xi^{-tpl} \prod\limits_{s=l+1}^{p+l} R_j^{i}(s,0) \right)  \\[5pt]
	&=&  \left(\binom{k}{p+l}_{\xi^t} \binom{p+l}{l}_{\xi^t} \right) \xi^{-t(k-(p+l))(p+l)} \xi^{-tpl} \prod\limits_{s=l+1}^{k} R_j^{i}(s,0) \\[5pt]
	&=& \left(\binom{k-l}{p}_{\xi^t} \binom{k}{l}_{\xi^t} \right) \xi^{-t(k-(p+l))p} \xi^{-t(k-l)l} \prod\limits_{s=l+1}^{k} R_j^{i}(s,0)
	\\[5pt]
	&=& \binom{k-l}{p}_{\xi^t} \xi^{-t(k-(p+l))p}\lambda_j^i(k,l).
\end{eqnarray*}
Therefore, $\mathcal{A}_j^{i}(m)$ satisfies the comatrix criterion in Lemma \ref{lem:comatrixcriteration}.
\end{proof}

\subsection{Further Analysis of the Comatrix}\label{subsection3}
In this subsection, we give a deeper analysis of the comatrices defined in the preceding subsection, thereby obtaining key properties of finite-dimensional simple Yetter-Drinfeld modules over $ H $.

First, we obtain the following result.
\begin{proposition}\label{pro:ctijequal0}
Let $V$ be a simple Yetter-Drinfeld module over $H$ with $\dim_{\k} V = p+1$ for $p \geq 0$.  If $V_{[j]}^{[i]} \neq 0$ for $i,j \in \mathbb{Z}$, then:
\begin{itemize}
	\item [(1)] $ c_j^i(p+1,l)=0 $ for all $ 0\leq l \leq p $;
	\item [(2)] $ i \equiv ti_1 \pmod n  $ for some $ i_1\in \Z $.
\end{itemize}
\end{proposition}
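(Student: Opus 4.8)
The plan is to exploit the fact that a standard element $v \in V_{[j]}^{[i]}$ generates $V$ as an $H$-module, together with the comatrix structure of $\A_j^i(p)$ established in Corollary~\ref{cor:Aijisacomatrix}. Since $\dim_\k V = p+1$, Lemma~\ref{thm:standbasis} tells us that $\{v, x\cdot v, \dots, x^p\cdot v\}$ is a basis of $V$, and in particular $x^{p+1}\cdot v \in V(v,p) = \span\{v, x\cdot v, \dots, x^p\cdot v\}$. For part (1), I would apply the coaction to the relation $x^{p+1}\cdot v = \sum_{l=0}^p a_l\, x^l\cdot v$ (with $a_l\in\k$), compute both sides using Equation~\eqref{deltax^kv}, and compare. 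The left-hand side gives $\delta(x^{p+1}\cdot v) = \sum_{l=0}^{p+1} c_j^i(p+1,l)\otimes x^l\cdot v$, where the $l=p+1$ term has coefficient $c_j^i(p+1,p+1)=g^{i-(p+1)t}$ acting on $x^{p+1}\cdot v$, which must itself be re-expanded via the relation. The right-hand side gives $\sum_{l=0}^p a_l \sum_{m=0}^l c_j^i(l,m)\otimes x^m\cdot v$, which only involves the basis vectors $x^0\cdot v,\dots,x^p\cdot v$ — no genuine $x^{p+1}\cdot v$ term. So after substituting the relation for $x^{p+1}\cdot v$ into the left-hand side and collecting coefficients of the linearly independent tensorands $(\,\cdot\,)\otimes x^l\cdot v$ for $0\le l\le p$, the terms $c_j^i(p+1,l)$ for $0\le l\le p$ must cancel against the contributions of $c_j^i(p+1,p+1)g^{i-(p+1)t}\cdot(\text{relation})$ and the right-hand side. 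A cleaner route: use Proposition~\ref{pro:cijformula}, which gives $c_j^i(p+1,l) = \lambda_j^i(p+1,l)\,x^{p+1-l}g^{i-(p+1)t}$, so it suffices to show $\lambda_j^i(p+1,l)=0$ for all $0\le l\le p$; and by the explicit formula~\eqref{explictlambda}, $\lambda_j^i(p+1,l) = \binom{p+1}{l}_{\xi^t}\xi^{-t(p+1-l)l}\prod_{s=l+1}^{p+1}R_j^i(s,0)$, which vanishes iff one of the factors $R_j^i(s,0) = \xi^{t(j)}-\xi^{t(s-1)-i}$ (for $l<s\le p+1$; note the $l=0$ slot in $R$ gives $\xi^{tj}-\xi^{t(s-1)-i}$) is zero — in particular it suffices that $R_j^i(p+1,0)=0$, i.e. $\xi^{tj} = \xi^{tp-i}$, equivalently $tj \equiv tp - i \pmod n$.

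So the heart of the argument for (1) is to prove $R_j^i(p+1,0)=0$, i.e. that $i \equiv tp - tj \pmod n$, which is exactly a statement of the form $i \equiv t\cdot(\text{integer}) \pmod n$ — and this is precisely what part (2) asserts. Hence (1) and (2) will follow from the same computation, and I would organize the proof so that establishing $R_j^i(p+1,0)=0$ comes first. To get this, I return to the coaction identity: apply $\delta$ to $x^p\cdot v$, giving $\delta(x^p\cdot v) = \sum_{l=0}^p c_j^i(p,l)\otimes x^l\cdot v = \sum_{l=0}^p \lambda_j^i(p,l) x^{p-l}g^{i-pt}\otimes x^l\cdot v$; the leading $l=p$ term is $g^{i-pt}\otimes x^p\cdot v$. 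Now the top-degree structure: $V$ being simple forces $x^{p+1}\cdot v$ back into the span, and the failure of the coaction to produce a genuine $(\,\cdot\,)\otimes x^{p+1}\cdot v$ term while the recursion~\eqref{coefficientckl} would naively produce $c_j^i(p+1,p+1)=g^{i-(p+1)t}\ne 0$ is what pins down the vanishing. More precisely: running the recursion one more step, $\delta(x^{p+1}\cdot v)$ formally equals $\sum_{l=0}^{p+1}c_j^i(p+1,l)\otimes x^l\cdot v$; but also $\delta(x^{p+1}\cdot v) = \delta(\sum a_l x^l\cdot v) = \sum_{l=0}^p\sum_{m=0}^l a_l c_j^i(l,m)\otimes x^m\cdot v$ lies in $H\otimes\span\{v,\dots,x^p\cdot v\}$, so the $x^{p+1}\cdot v$-component of the first expression must be re-expressed: replacing $x^{p+1}\cdot v$ by $\sum a_l x^l\cdot v$ in the first sum and re-collecting, the coefficient of $(\,\cdot\,)\otimes x^l\cdot v$ for each $0\le l\le p$ must match. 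Comparing the coefficient of the tensorand whose $H$-part has $x$-degree $p+1-l$ (the maximal possible $x$-degree in the $x^l\cdot v$ slot), namely $x^{p+1-l}g^{i-(p+1)t}$: on the left this comes only from $c_j^i(p+1,l) = \lambda_j^i(p+1,l)x^{p+1-l}g^{i-(p+1)t}$; on the right, from $c_j^i(p+1,p+1)\cdot(\text{relation contribution})$ the $x$-degree in the $x^l\cdot v$ slot is at most $p-l < p+1-l$, and the genuine right-hand side $\sum a_l c_j^i(l,m)$ has $x$-degrees at most $l-m \le p$. Thus the coefficient of $x^{p+1-l}g^{i-(p+1)t}\otimes x^l\cdot v$ is $\lambda_j^i(p+1,l)$ and it must be $0$; taking $l=p$ gives $\lambda_j^i(p+1,p) = R_j^i(p+1,0)\cdots$; unwinding via~\eqref{explictlambda} with $l=p$ yields $\lambda_j^i(p+1,p) = \binom{p+1}{p}_{\xi^t}\xi^{-tp}\,R_j^i(p+1,0)$, and since $\binom{p+1}{p}_{\xi^t}=[p+1]_{\xi^t}$ — here I would use that $p+1 \le N$ (else $V$ would not be simple, by an argument from the comatrix analysis, or I invoke the relevant earlier lemma) so this $q$-integer is nonzero — we conclude $R_j^i(p+1,0)=0$. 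This is $\xi^{tj}=\xi^{tp-i}$, i.e. $\xi^{i} = \xi^{tp-tj} = \xi^{t(p-j)}$, so $i \equiv t(p-j)\pmod n$, proving (2) with $i_1 = p-j$; and then every factor $R_j^i(s,0)$ for $s\le p+1$ that appears — actually I only need the single factor $R_j^i(p+1,0)$, which divides $\prod_{s=l+1}^{p+1}R_j^i(s,0)$ for every $0\le l\le p$ — forces $\lambda_j^i(p+1,l)=0$, hence $c_j^i(p+1,l)=0$ for all $0\le l\le p$, proving (1).

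The main obstacle I anticipate is the bookkeeping in the coefficient comparison: I must be careful that, after substituting the linear relation $x^{p+1}\cdot v = \sum a_l x^l\cdot v$ into $\sum_{l=0}^{p+1}c_j^i(p+1,l)\otimes x^l\cdot v$, the resulting expression genuinely matches $\sum_{l,m}a_l c_j^i(l,m)\otimes x^m\cdot v$ as an identity in $H\otimes V$ — this match is automatic because $\delta$ is well-defined on $V$, but extracting the vanishing of individual $\lambda_j^i(p+1,l)$ requires identifying a monomial $x^{p+1-l}g^{i-(p+1)t}$ in the $H$-slot that cannot be produced by any of the competing terms, which is a degree argument that must be stated cleanly. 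A secondary subtlety is justifying $p+1 \le N$ so that the relevant $\xi^t$-binomial coefficient $\binom{p+1}{p}_{\xi^t}$ is nonzero; if this is not available from the preceding text, an alternative is to argue directly that if $R_j^i(p+1,0)\ne 0$ then $\lambda_j^i(p+1,p)\ne 0$, so $c_j^i(p+1,p)\ne 0$, contradicting the degree count — in other words, derive the contradiction from $c_j^i(p+1,p)=\lambda_j^i(p+1,p)\,x\,g^{i-(p+1)t}\ne 0$ directly rather than solving for $\lambda$, which sidesteps the $q$-integer issue entirely. I would present the proof via this direct-contradiction formulation to keep it self-contained.
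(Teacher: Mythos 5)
Your coefficient-comparison argument for part (1) is correct and is essentially the paper's proof: expanding $\delta(x^{p+1}\cdot v)$ in two ways, substituting $c_j^i(k,l)=\lambda_j^i(k,l)x^{k-l}g^{i-kt}$ from Proposition~\ref{pro:cijformula}, and observing that the monomial $x^{p+1-l}g^{i-(p+1)t}$ cannot occur among the competing terms (which have $x$-degree at most $p-l$, or equal to $0$) yields $\lambda_j^i(p+1,l)=0$, hence $c_j^i(p+1,l)=0$, for every $0\le l\le p$.

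The gap is in your route to part (2). You want to conclude that the \emph{specific} factor $R_j^i(p+1,0)$ vanishes, via $\lambda_j^i(p+1,p)=\binom{p+1}{p}_{\xi^t}\xi^{-tp}R_j^i(p+1,0)=0$, and this requires $\binom{p+1}{p}_{\xi^t}=[p+1]_{\xi^t}\neq 0$. That $q$-integer vanishes exactly when $N\mid p+1$, and this case genuinely occurs: the simple modules $V(ti',j,\lambda)$ with $\lambda\neq 0$ have dimension $p+1=n$, and $N\mid n$, so whenever $\gcd(t,n)>1$ one has $[p+1]_{\xi^t}=0$ and the inference breaks down. (Corollary~\ref{cor:dimensionleqn} only gives $\dim_\k V\le n$, not $\le N$, so the hypothesis ``$p+1\le N$'' you hope to invoke is unavailable and is in fact false for these modules.) Your proposed fallback --- ``if $R_j^i(p+1,0)\neq 0$ then $\lambda_j^i(p+1,p)\neq 0$'' --- fails for the same reason. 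Indeed the stronger conclusion $i\equiv t(p-j)\pmod n$ is false in general: for $V(ti',j,\lambda)$ with $\lambda\neq 0$ it would force $i'+j\equiv -1\pmod N$, which need not hold. The repair is immediate and is what the paper does: take $l=0$ in what you have already proved, so that by \eqref{explictlambda} $\lambda_j^i(p+1,0)=\prod_{k=1}^{p+1}R_j^i(k,0)=0$ (here the $q$-binomial equals $1$, so no non-vanishing hypothesis is needed), whence \emph{some} factor $R_j^i(k,0)=\xi^{tj}-\xi^{t(k-1)-i}$ vanishes, giving $i\equiv t(k-1-j)\pmod n$ --- which is all that part (2) asserts.
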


\begin{proof}
Fix a non-zero element $v \in V_{[j]}^{[i]}$. By Lemma \ref{thm:standbasis}, $\{v, x \cdot v, \dots, x^p \cdot v\}$ constitutes a basis of $V$. Thus
\[
x^{p+1} \cdot v = \sum_{k=0}^{p} a_k x^k \cdot v \quad \text{for some } a_k \in \k.
\]
Compute the coaction:
\begin{eqnarray*}
   \delta (x^{p+1}\cdot v) 
&=&\delta \left(\sum\limits_{k=0}^p a_k  x^{k}\cdot v \right) \\
&=&\sum\limits_{k=0}^p \sum\limits_{l=0}^k a_k c_j^i(k,l) \otimes x^l \cdot v\\
&=&\sum\limits_{l=0}^p \left(\sum\limits_{k=l}^p a_k c_j^i(k,l) \right) \otimes   x^l \cdot v.
\end{eqnarray*}
Alternatively:
\begin{eqnarray*}
\delta (x^{p+1}\cdot v)
&=& \sum\limits_{l=0}^{p+1} c_j^i(p+1,l) \otimes x^l\cdot v  \\
&=&\sum\limits_{l=0}^{p} c_j^i(p+1,l) \otimes x^l\cdot v + c_j^i(p+1,p+1) \otimes x^{p+1}\cdot v  \\
&=&\sum\limits_{l=0}^{p} \left(c_j^i(p+1,l)+a_lc_j^i(p+1,p+1) \right)  \otimes x^l\cdot v.
\end{eqnarray*}
Comparing coefficients for $0 \leq l \leq p$:
\begin{equation}\label{eq:coeffeq}
\sum_{k=l}^p a_k c_j^i(k,l) = c_j^i(p+1,l) + a_l c_j^i(p+1,p+1).
\end{equation}
By Proposition \ref{pro:cijformula}, $ c_j^i(k,l)=\lambda_j^i(k,l)x^{k-l}g^{i-kt} $. Substituting into (\ref{eq:coeffeq}):
\[
\lambda_j^i(p+1,l) x^{p+1-l} g^{i-(p+1)t} = \sum_{k=l}^p a_k \lambda_j^i(k,l) x^{k-l} g^{i-kt} - a_l g^{i-(p+1)t}.
\]
Comparing exponents of $x$, we obtain $\lambda_j^i(p+1,l) = 0$, hence $c_j^i(p+1,l) = 0$. This proves (1).
	
For (2), note $\lambda_j^i(p+1,0) = \prod_{k=1}^{p+1} R_j^i(k,0) = 0$. Thus $R_j^i(k,0) = \xi^{tj} - \xi^{t(k-1)-i} = 0$ for some $1 \leq k \leq p+1$. Therefore,
\[
i \equiv t(k-1-j) \pmod{n},
\]
which gives (2) with $i_1 = k-1-j$.
\end{proof}

\begin{remark}\label{rmk:onlyti}
{\normalfont 
By Proposition \ref{pro:ctijequal0}, for a finite-dimensional simple Yetter-Drinfeld module $V$ with standard element $v$, we have $v \in V_{[j]}^{[ti]}$ for some $i,j \in \mathbb{Z}$. The converse also holds by Corollary \ref{cor:coversectijequal0}.}
\end{remark}

Now, we restrict our attention to matrices of the form $\mathcal{A}_j^{ti}(p)$. Recall that $ w=\xi^t $, which implies $w^N = 1$ and $R_j^{ti}(k,l) = w^{j-l} - w^{k-1-i}$. For any integer $k \geq 1$,
\begin{equation*}
	c_j^{ti}(k,0) = \lambda_j^{ti}(k,0) x^k g^{t(i-k)} = \left(\prod\limits_{l=1}^{k}(w^{j}-w^{l-1-i}  ) \right) x^k g^{t(i-k)}.
\end{equation*}
Since this expression vanishes for some $k \geq 1$, we introduce the following definition.

\begin{definition}
For any $ i,j\in \Z $ , define $ \overline{\A_j^{ti}}:=\A_j^{ti}(m) $, where $ m\geq 0 $ is the smallest integer such that $ c_j^{ti}(m+1,0)=0 $.
\end{definition}
The following properties are immediate from the definitions.
\begin{remark}\label{rmk:Atijblock}
\hfill
{\normalfont 
\begin{itemize}
	\item [(1)] As established, $c_j^{ti}(m+1,0) = 0$ if and only if $\lambda_j^{ti}(m+1,0) = 0$.
	Therefore, $m$ is also the smallest integer such that $\lambda_j^{ti}(m+1,0) = 0$, 
	and hence the smallest integer such that $R_j^{ti}(m+1,0) = 0$. 
	\item [(2)] Proposition \ref{prop:explictlambda} implies $\lambda_j^{ti}(k,l) \neq 0$ for all $0 \leq l \leq k \leq m$,  meaning $\overline{\mathcal{A}_j^{ti}}$ has strictly non-zero entries throughout its lower triangular part.
	\item [(3)]  Proposition \ref{prop:explictlambda} implies $\lambda_j^{ti}(k,l) = 0$ for all $0 \leq l \leq m <k $,  so all entries below $\overline{\mathcal{A}_j^{ti}}$ vanish.
\end{itemize}}
\end{remark}

Motivated by these observations, we obtain the following matrix decomposition.

\begin{lemma}\label{lem:comatrixdecomposition}
For $i,j \in \mathbb{Z}$, assume $\overline{\mathcal{A}_j^{ti}} = \mathcal{A}_j^{ti}(m)$. Then for any integer $p > m$,
\begin{equation*}
	\A_j^{ti}(p)=
	\left(\begin{array}{cccc}
		\A_j^{ti}(m) &   0           \\[10pt]
		0            &  \A_{j-(m+1)}^{t(i-(m+1))}(p-m-1)
	\end{array}
	\right).
\end{equation*}
\end{lemma}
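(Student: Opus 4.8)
The claim is a block-diagonal decomposition of the lower-triangular comatrix $\A_j^{ti}(p)$ once $p$ exceeds the cutoff index $m$. The strategy is to read off the entries from the explicit formula for $\lambda_j^{ti}(k,l)$ in Proposition~\ref{prop:explictlambda} together with $c_j^{ti}(k,l)=\lambda_j^{ti}(k,l)x^{k-l}g^{ti-kt}$ from Proposition~\ref{pro:cijformula}, and to match them against the entries of the two diagonal blocks $\A_j^{ti}(m)$ and $\A_{j-(m+1)}^{t(i-(m+1))}(p-m-1)$. Concretely, I would write $\A_j^{ti}(p)$ in $2\times 2$ block form relative to the splitting of row/column indices into $\{0,\dots,m\}$ and $\{m+1,\dots,p\}$, and verify the four blocks separately.

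First, the top-left block is literally $\A_j^{ti}(m)$ by Definition~\ref{def:ckl}, since the entry $c_j^{ti}(k,l)$ with $0\le l\le k\le m$ does not depend on $p$. Second, the bottom-left block vanishes: for $m<k\le p$ and $0\le l\le m$ one has $\lambda_j^{ti}(k,l)=0$ by Remark~\ref{rmk:Atijblock}(3) (equivalently, the product $\prod_{s=l+1}^{k}R_j^{ti}(s,0)$ contains the factor $R_j^{ti}(m+1,0)=0$ for $l<m+1\le k$), so $c_j^{ti}(k,l)=0$; the top-right block is zero simply because the matrix is lower triangular. The real content is the bottom-right block: I must show that for $m+1\le k,l\le p$, writing $k=m+1+k'$ and $l=m+1+l'$ with $0\le l'\le k'\le p-m-1$, one has
\[
c_j^{ti}(k,l)=c_{j-(m+1)}^{t(i-(m+1))}(k',l').
\]
By the two explicit formulas this reduces to comparing the $x$-power $x^{k-l}=x^{k'-l'}$ (immediate), the group-like part $g^{ti-kt}$ versus $g^{t(i-(m+1))-k't}$ (equal since $k=k'+m+1$), and the scalar $\lambda_j^{ti}(k,l)$ versus $\lambda_{j-(m+1)}^{t(i-(m+1))}(k',l')$.

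For the scalar comparison I would use Proposition~\ref{prop:explictlambda}: $\lambda_j^{ti}(k,l)=\binom{k}{l}_{\xi^t}\xi^{-t(k-l)l}\prod_{s=l+1}^{k}R_j^{ti}(s,0)$. The Gaussian-binomial prefactor is not simply $\binom{k'}{l'}_{\xi^t}$, so the cleaner route is the recursive characterization: I would check that the doubly-indexed family $\mu(k',l'):=\lambda_j^{ti}(m+1+k',m+1+l')$ satisfies the same recursion as $\lambda_{j-(m+1)}^{t(i-(m+1))}(k',l')$, namely $\mu(0,0)=1$ (which is $\lambda_j^{ti}(m+1,m+1)=1$), $\mu(k',k')=1$, and $\mu(k',l')=R_{j-(m+1)}^{t(i-(m+1))}(k',l')\,\mu(k'-1,l')+\mu(k'-1,l'-1)$ for $0<l'<k'$. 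The key identity here is the shift relation for the $R$-coefficients: $R_j^{ti}(m+1+k',\,l)=w^{j-l}-w^{m+k'-i}=w^{(j-(m+1))-(l-(m+1))}-w^{k'-1-(i-(m+1))}=R_{j-(m+1)}^{t(i-(m+1))}(k',\,l-(m+1))$, using only $w=\xi^t$ and arithmetic of exponents. Feeding this into the recursion for $\lambda_j^{ti}$ at shifted indices $k=m+1+k'$, $l=m+1+l'$ and invoking uniqueness of the solution to the recursion gives $\mu=\lambda_{j-(m+1)}^{t(i-(m+1))}$, completing the bottom-right block.

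\textbf{Main obstacle.} The only place requiring care is the bottom-right block, and within it the bookkeeping of which recursion branch applies under the index shift — in particular confirming that the "diagonal" boundary case $l'=k'$ of the shifted family indeed comes from $\lambda_j^{ti}(m+1+k',m+1+k')=1$ rather than from a generic recursion step, and that no $R_j^{ti}$-factor with subscript in $\{1,\dots,m+1\}$ sneaks into $\prod_{s=l+1}^k R_j^{ti}(s,0)$ when $l\ge m+1$ (it does not, since the product runs over $s\ge l+1\ge m+2$). I expect this to be routine given Propositions~\ref{pro:cijformula} and~\ref{prop:explictlambda} and Remark~\ref{rmk:Atijblock}, with the $R$-coefficient shift identity doing essentially all the work; the block-triangular vanishing statements are immediate from Remark~\ref{rmk:Atijblock}(3).
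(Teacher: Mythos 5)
Your proposal is correct and follows essentially the same route as the paper's proof: the vanishing of the off-diagonal blocks comes from Remark~\ref{rmk:Atijblock}(3), and the bottom-right block is identified by showing the shifted family $\lambda_j^{ti}(m+1+k',m+1+l')$ obeys the recursion of $\lambda_{j-(m+1)}^{t(i-(m+1))}$ via the shift identity for the $R$-coefficients (the paper phrases this as an induction on $k$, which is the same argument as your appeal to uniqueness of the recursion's solution). The explicit matching of the $x$-power and group-like factors is a minor addition the paper leaves implicit.
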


\begin{proof}
Let $m < k \leq p$. To establish the decomposition, it suffices to prove	
\begin{equation}\label{ctiandcti-mt}
	\lambda_j^{ti}(k,l)=
	\begin{cases}
		0, & 0\leq l<m+1 ,\\[5pt]
		\lambda_{j-(m+1)}^{t(i-(m+1))}(k-(m+1),l-(m+1)) , \qquad  & m+1\leq l \leq k .
	\end{cases}
\end{equation} 
for all such $k$.

\noindent \textbf{Case 1:} $0 \leq l < m+1$.
This follows immediately from Remark \ref{rmk:Atijblock}(3), which gives $\lambda_j^{ti}(k,l) = 0$.

\noindent \textbf{Case 2:} $m+1 \leq l \leq k$. We proceed by induction on $k$. The base case $k = m+1$ holds trivially. Now assume the statement holds for some $k$ with $m < k < p$. For $k+1$, we consider three possibilities:
\begin{itemize}
	\item [(1)] $l = k+1$: The result holds by definition.
	\item [(2)] $m+1 < l < k+1$: By the induction hypothesis,
\begin{eqnarray*}
	& &	\lambda_j^{ti}(k+1,l) \\[5pt]
	&=& R_j^{ti}(k+1,l)\lambda_j^{ti}(k,l)+\lambda_j^{ti}(k,l-1)  \\[5pt]
	&=& R_{j-(m+1)}^{t(i-(m+1))}(k+1-(m+1),l-(m+1))\lambda_{j-(m+1)}^{t(i-(m+1))}(k-(m+1),l-(m+1)) \\[5pt] 
	& & + \lambda_{j-(m+1)}^{t(i-(m+1))}(k-(m+1),l-1-(m+1)) 
	\\[5pt]
	&=& \lambda_{j-(m+1)}^{t(i-(m+1))}(k+1-(m+1),l-(m+1)).
\end{eqnarray*}
   \item [(3)] $l = m+1$: The result follows similarly.
\end{itemize}
Thus, the induction extends to $k+1$, completing the proof.
\end{proof}

We now demonstrate that the matrix decomposition exhibits a periodic structure. To establish this, we first define a function $\phi$ that provides an explicit expression for the size of $  \overline{\A_j^{ti}} $.  For any  $i \in \mathbb{Z}$, let $\phi(i)$ denote the unique integer in $\{1, 2, \dots, N\}$ satisfying $i \equiv \phi(i) \pmod{N}$.

\begin{proposition}\label{lem:Atijblocks}
For any  $ i,j\in \Z $ , assume  $ \overline{\A_j^{ti}}=\A_j^{ti}(m) $. Then for any integer $p\geq 1$,
\begin{itemize}
	\item [(1)] $ m=N- \phi(-i-j) $.
	\item [(2)] The $N \times N$ matrix decomposes as
	\begin{equation}\label{Atijblock}
		\A_j^{ti}(N-1)=
		\begin{cases}
			\hspace{3.2ex}	\overline{\A_j^{ti}}, & m=N-1  , \\[10pt]
			\left(\begin{array}{cccc}
			\overline{\A_j^{ti}} &   0           \\[10pt]
			0           & \overline{\A_{j-(m+1)}^{t(i-(m+1))}} 
				 \end{array}
				\right)  , \qquad & 0\leq m <N-1 .
		\end{cases}
	\end{equation}
	\item [(3)] For multiples of $N$, the matrix has block-diagonal structure:
	\begin{equation}\label{Atijblocks}
		\A_j^{ti}(pN-1)=
		\left(\begin{array}{cccc}
		 \A_j^{ti}(N-1) &              \\[10pt]
				&  \ddots  &      \\[10pt]
				&  &         \A_j^{ti}(N-1)
		\end{array}
		\right)
	\end{equation}
    containing $p$ identical blocks along the diagonal.
\end{itemize}
\end{proposition}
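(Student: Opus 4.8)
The plan is to establish the three parts in order, exploiting the recursive structure already set up, with part (1) being the genuine computational core and parts (2)–(3) following essentially formally from it together with Lemma~\ref{lem:comatrixdecomposition}.

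For part (1), I would use the characterization in Remark~\ref{rmk:Atijblock}(1): $m$ is the smallest nonnegative integer with $R_j^{ti}(m+1,0)=0$. Since $R_j^{ti}(k,0)=w^{j}-w^{k-1-i}$ with $w=\xi^t$ of order $N$, the equation $R_j^{ti}(k,0)=0$ is equivalent to $k-1-i\equiv j\pmod N$, i.e. $k\equiv i+j+1\pmod N$. The smallest $k\geq 1$ solving this is $k=\phi(i+j+1)$ (using that $\phi$ takes values in $\{1,\dots,N\}$), so $m=\phi(i+j+1)-1$. It then remains to check the bookkeeping identity $\phi(i+j+1)-1 = N-\phi(-i-j)$; this is a routine congruence check: both sides lie in $\{0,\dots,N-1\}$ and are congruent mod $N$ since $\phi(i+j+1)+\phi(-i-j)\equiv (i+j+1)+(-i-j)=1\pmod N$, forcing the sum to equal $N+1$. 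I expect this to be the main obstacle only in the sense of getting the index arithmetic exactly right.

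For part (2), I would invoke Lemma~\ref{lem:comatrixdecomposition} with $p=N-1$. When $m=N-1$, $\overline{\A_j^{ti}}=\A_j^{ti}(N-1)$ directly. When $0\leq m<N-1$, the lemma gives $\A_j^{ti}(N-1)$ as a block-diagonal matrix with upper-left block $\A_j^{ti}(m)=\overline{\A_j^{ti}}$ and lower-right block $\A_{j-(m+1)}^{t(i-(m+1))}(N-1-m-1)$. To identify this lower block as $\overline{\A_{j-(m+1)}^{t(i-(m+1))}}$, I apply part (1) to the shifted parameters: the size of $\overline{\A_{j-(m+1)}^{t(i-(m+1))}}$ is $N-\phi(-(i-(m+1))-(j-(m+1)))=N-\phi(-i-j+2(m+1))$. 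Using $m+1=N-\phi(-i-j)$ and $\phi$'s periodicity, $-i-j+2(m+1)\equiv -i-j-2\phi(-i-j)\equiv -\phi(-i-j)\pmod N$, so the size is $N-\phi(\phi(-i-j))$; and since $\phi(-i-j)\in\{1,\dots,N\}$ one has $\phi(\phi(-i-j))=\phi(-i-j)$, giving size $N-\phi(-i-j)=m+1$. Hence the second block has dimension $(m+1)\times(m+1)$, matching $N-1-m$ as its largest index, i.e. it is precisely $\overline{\A_{j-(m+1)}^{t(i-(m+1))}}$.

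For part (3), I would argue by induction on $p$, repeatedly applying Lemma~\ref{lem:comatrixdecomposition}. Concretely, $\A_j^{ti}(pN-1)$ decomposes (taking the first cut at $m$, then continuing) into blocks $\A_j^{ti}(N-1), \A_{j-N}^{t(i-N)}(N-1),\dots$; but $w^N=1$ together with Remark after Definition~\ref{def:ckl} (the parameters $i,j$ enter $\A_j^{ti}$ only through $i\bmod n$ and $j\bmod N$, and shifting both $i$ and $j$ by $N$ leaves $t i\bmod n$ fixed since $t N\equiv 0\pmod n$ and $j\bmod N$ fixed) shows $\A_{j-N}^{t(i-N)}(N-1)=\A_j^{ti}(N-1)$. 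Iterating $p$ times yields the $p$ identical diagonal blocks. The one point to state carefully is this shift-invariance of $\A_j^{ti}$ under $(i,j)\mapsto(i-N,j-N)$, which follows because $t(i-N)=ti-tN\equiv ti\pmod n$ and $j-N\equiv j\pmod N$, so by Remark~\ref{rmk:Atijblock}'s preceding remark (item (1) after Definition~\ref{def:ckl}) the matrices coincide.
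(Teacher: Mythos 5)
Your overall strategy coincides with the paper's: part (1) via the characterization of $m$ as the smallest index with $R_j^{ti}(m+1,0)=0$ followed by the congruence identity $\phi(i+j+1)+\phi(-i-j)=N+1$; part (2) by applying Lemma~\ref{lem:comatrixdecomposition} with $p=N-1$ and then part (1) to the shifted parameters; and part (3) by induction using the shift-invariance $\A_{j-N}^{t(i-N)}(k)=\A_j^{ti}(k)$, which you correctly justify from $tN\equiv 0\pmod n$. Parts (1) and (3) are correct as written.

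Part (2), however, contains a chain of arithmetic slips that ends in a false conclusion. First, part (1) gives $m=N-\phi(-i-j)$, not $m+1=N-\phi(-i-j)$; with the correct substitution one gets $-i-j+2(m+1)\equiv -\phi(-i-j)+2\pmod N$, not $-\phi(-i-j)$. Second, the resulting index is $N-\phi\bigl(-\phi(-i-j)+2\bigr)$; you dropped the inner minus sign when you wrote $N-\phi(\phi(-i-j))$. Third, and most seriously, your conclusion that the lower-right block is $(m+1)\times(m+1)$ cannot be right: the upper-left block $\overline{\A_j^{ti}}=\A_j^{ti}(m)$ is already $(m+1)\times(m+1)$, so two such blocks assemble into a $(2m+2)\times(2m+2)$ matrix, which equals the $N\times N$ matrix $\A_j^{ti}(N-1)$ only when $N=2m+2$. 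What must actually be shown is that $\overline{\A_{j-(m+1)}^{t(i-(m+1))}}=\A_{j-(m+1)}^{t(i-(m+1))}(N-m-2)$, i.e.\ that the shifted parameter $m'$ equals $N-m-2$, giving a block of dimension $(N-m-1)\times(N-m-1)$. The correct computation runs: since $0\le m<N-1$ we have $\phi(-i-j)=N-m\in\{2,\dots,N\}$, hence $-\phi(-i-j)+2=m-N+2\in\{-N+2,\dots,0\}$, so $\phi\bigl(-\phi(-i-j)+2\bigr)=m+2$ and $m'=N-(m+2)=N-m-2$ as required. This is exactly the computation carried out in the paper's proof; your version as written does not establish the claim.
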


\begin{proof}
For any $k \geq 1$, we have 
\[
R_j^{ti}(k,0) = w^j - w^{k-1-i} = w^j(1 - w^{k-1-i-j}).
\]
Observe that $N - \phi(-i-j)$ is the smallest integer satisfying $R_j^{ti}(N - \phi(-i-j) + 1, 0) = 0$. 
Thus $m = N - \phi(-i-j)$, establishing part (1).	
	
For part (2), we consider only $0 \leq m < N-1$. By Lemma \ref{lem:comatrixdecomposition}, it suffices to verify 	$\overline{\mathcal{A}_{j-(m+1)}^{t(i-(m+1))}} = \mathcal{A}_{j-(m+1)}^{t(i-(m+1))}(N-m-2) $. Assuming $\overline{\mathcal{A}_{j-(m+1)}^{t(i-(m+1))}} = \mathcal{A}_{j-(m+1)}^{t(i-(m+1))}(k)$, part (1) yields	
\begin{eqnarray*}
   k&=&	N-\phi\left(-(i-(m+1))-(j-(m+1))\right) \\[5pt]
	&=& N-\phi\left(-i-j+2m+2)\right) \\[5pt]
	&=& N-\phi\left(-i-j+2(N-\phi(-i-j))+2 \right) \\[5pt] 
	&=& N-\phi\left(-i-j-2\phi(-i-j)+2 \right) \\[5pt]
	&=& N-\phi\left(-\phi(-i-j)+2 \right).
\end{eqnarray*}
Since $0 \leq m = N - \phi(-i-j) < N-1$, we have $-N+2 \leq -\phi(-i-j) + 2 \leq 0$, and
\begin{eqnarray*}
  k &=& N-\phi\left(-\phi(-i-j)+2 \right)  \\[5pt]
    &=&  N-(-\phi(-i-j)+2+N)   \\[5pt]
    &=&  \phi(-i-j)-2    \\[5pt]
    &=&  N-m-2,
\end{eqnarray*}
concluding part (2).

For part (3), we proceed by induction on $ p $. The case $p=1$ holds trivially. Assume (\ref{Atijblocks}) holds for some $p$. By the property $\mathcal{A}_j^{ti}(k) = \mathcal{A}_{j_1}^{ti_1}(k)$ whenever $i \equiv i_1 \pmod{N}$ and $j \equiv j_1 \pmod{N}$), Lemma \ref{lem:comatrixdecomposition}, and part (2) yield
\begin{eqnarray*}
\A_j^{ti}((p+1)N-1)
    &=& \left(\begin{array}{cccc}
     	      \A_j^{ti}(N-1) &   0           \\[10pt]
    	      0           & \A_{j-N}^{t(i-N)}(pN-1)
              \end{array}
              \right)  \\[5pt]
	&=&  \left(\begin{array}{cccc}
		      \A_j^{ti}(N-1) &   0           \\[10pt]
		      0           & \A_j^{ti}(pN-1)
	           \end{array}
	     \right)  \\[5pt]
	&=&  \left(\begin{array}{cccc}
		  \A_j^{ti}(N-1) &              \\[10pt]
		  &  \ddots  &      \\[10pt]
		  &  &         \A_j^{ti}(N-1)
	           \end{array}
	     \right),
\end{eqnarray*}
completing the induction step and the proof.
\end{proof}

With the decomposition of $\mathcal{A}_j^{ti}(p)$ established, we now present two key consequences. The first states that any finite-dimensional simple module has dimension at most $n$.

\begin{corollary}\label{cor:dimensionleqn}
Let $V$ be a Yetter-Drinfeld module over $H$, and let $i, j \in \mathbb{Z}$. Then:
\begin{itemize}
	\item [(1)] $ x^n \cdot (V_{[j]}^{[ti]}) \subseteq V_{[j]}^{[ti]} $.
	\item [(2)] If $V$ is finite-dimensional and simple, then $\dim_{\k} V \leq n$.
\end{itemize}

\end{corollary}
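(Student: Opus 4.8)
The plan is to read off both statements from the block decompositions of the comatrix $\A_j^{ti}(p)$ established in Lemma~\ref{lem:comatrixdecomposition} and Proposition~\ref{lem:Atijblocks}, together with Proposition~\ref{prop:structureofHmodule} describing the $H$-action via the shift map $T = x\cdot(-)$.

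For part~(1), fix a nonzero $v \in V_{[j]}^{[ti]}$ (if the space is zero there is nothing to prove). By Remark after Definition~\ref{def:ckl}, equation~\eqref{deltax^kv} gives $\delta(x^k\cdot v) = \sum_{l=0}^{k} c_j^{ti}(k,l)\otimes x^l\cdot v$, and by Proposition~\ref{pro:cijformula} each coefficient $c_j^{ti}(k,l)$ is a scalar multiple of $x^{k-l}g^{ti-kt}$. Taking $k = n$: I claim that $\lambda_j^{ti}(n,l) = 0$ for every $0\le l \le n-1$, so that $\delta(x^n\cdot v) = c_j^{ti}(n,n)\otimes x^n\cdot v = g^{ti-nt}\otimes x^n\cdot v = g^{ti}\otimes x^n\cdot v$, which says exactly $x^n\cdot v \in V^{[ti]}$. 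To see the claim, note $n$ is a multiple of $N = o(g^t)$, so by Proposition~\ref{lem:Atijblocks}(3) the matrix $\A_j^{ti}(n-1)$ is block-diagonal with each diagonal block of the form $\overline{\A_{j'}^{ti'}}$, and by Remark~\ref{rmk:Atijblock}(3) all entries strictly below the last such block vanish; in particular the entire row indexed by $n$ (which would be the first row of the next block $\A_{j-n}^{t(i-n)}$) has its first $n$ entries equal to zero — this is precisely $\lambda_j^{ti}(n,l)=0$ for $0\le l\le n-1$, which also follows directly from $R_j^{ti}(N,0) = w^j - w^{N-1-i}$ forcing some factor in the product formula of Proposition~\ref{prop:explictlambda} to vanish before index $n$ is reached. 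It then remains to check $g\cdot(x^n\cdot v) = \xi^j (x^n\cdot v)$: by Proposition~\ref{pro:spaniscomodule}(1), $x^n\cdot v \in V_{[j-n]} = V_{[j]}$ since $n \equiv 0 \pmod n$. Hence $x^n\cdot v \in V_{[j]}^{[ti]}$.

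For part~(2), let $V$ be finite-dimensional and simple. By Lemma~\ref{lem:socV}, $V$ contains a standard element, and by Remark~\ref{rmk:onlyti} (i.e.\ Proposition~\ref{pro:ctijequal0}(2)) we may take a standard element $v \in V_{[j]}^{[ti]}$ for some $i,j\in\Z$. By Lemma~\ref{thm:standbasis}, $\{v, x\cdot v, \dots, x^p\cdot v\}$ is a basis of $V$ where $p+1 = \dim_\k V$. Suppose for contradiction $p \ge n$. Consider the subspace $W := V(v, N-1) = \span\{v, x\cdot v, \dots, x^{N-1}\cdot v\}$; since $N \le n \le p$, these $N$ vectors are linearly independent, so $\dim_\k W = N$. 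By Proposition~\ref{pro:spaniscomodule}(2), $W$ is a subcomodule. Moreover part~(1) (applied with $N$ a multiple of... wait, more directly): by part~(1) above, $x^n\cdot v \in V_{[j]}^{[ti]}$, and one checks $V(v,n-1)$ is closed under $x$ precisely when $x^n\cdot v$ lies in it. The cleanest route: by the block decomposition $\A_j^{ti}(n-1)$ of Proposition~\ref{lem:Atijblocks}(3) and Lemma~\ref{lem:comatrixdecomposition}, the span $V(v, n-1)$ is a subcomodule, and since $x^n\cdot v \in V^{[ti]}_{[j]}$ it is again a standard element of the same type, so applying Lemma~\ref{thm:standbasis} to $V(v,n-1)$... actually the sharp statement is: by Proposition~\ref{prop:structureofHmodule}, $x^n\cdot v \in V_{[j]}$, and by part~(1) it lies in $V^{[ti]}_{[j]}$; but the coaction formula $\delta(x^n\cdot v)= g^{ti}\otimes x^n\cdot v$ combined with simplicity and Lemma~\ref{thm:standbasis} forces $x^n\cdot v$ to be expressible in $\{v,\dots,x^{n-1}\cdot v\}$ only if $V(v,n-1)=V$. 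Thus $V(v,n-1)$ is a nonzero subcomodule that is also $x$-stable (hence $H$-stable), contradicting simplicity of $V$ unless $V = V(v,n-1)$, giving $\dim_\k V \le n$.

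\textbf{Main obstacle.} The delicate point is showing $V(v,n-1)$ (equivalently $V(v,N-1)$, then iterating) is $x$-stable, i.e.\ that $x^n\cdot v$ does not escape the span of the first $n$ powers. The block-diagonal structure of $\A_j^{ti}(n-1)$ handles the \emph{comodule} side cleanly, but stability under the $H$-\emph{action} requires that the simple module cannot have a standard element $v$ whose orbit $\{x^k\cdot v\}$ stays independent past step $n-1$; this is where one must combine Lemma~\ref{thm:standbasis} (a standard element generates $V$ and its orbit length equals $\dim_\k V$) with the fact, extracted from the comatrix decomposition, that $x^n\cdot v$ is \emph{again} a standard element of type $(j, ti)$, so the subcomodule it would generate is forced by minimality/simplicity to already sit inside $V(v,n-1)$. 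Making this last implication airtight — rather than merely plausible — is the step I expect to require the most care.
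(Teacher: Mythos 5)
Your part~(1) is correct and is essentially the paper's argument: both reduce to $c_j^{ti}(n,l)=0$ for $0\le l<n$ (the paper cites Proposition~\ref{lem:Atijblocks}(3) directly, you derive it from the block decomposition and Remark~\ref{rmk:Atijblock}(3), which is the same content), combined with Proposition~\ref{pro:spaniscomodule}(1) for the $g$-eigenvalue.

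Part~(2), however, has a genuine gap, and it is exactly the one you flag yourself as the ``main obstacle.'' Your contradiction argument needs $x^n\cdot v\in V(v,n-1)$ for the standard element $v$ you chose, but nothing you write establishes this. Knowing that $x^n\cdot v$ is again a standard element of type $(ti,j)$ does not force it into $\operatorname{span}\{v,\dots,x^{n-1}\cdot v\}$: the one-dimensional subcomodule it generates is $\operatorname{span}\{x^n\cdot v\}$, and simplicity of $V$ as a Yetter--Drinfeld module gives no minimality statement at the level of subcomodules. Your ``sharp statement'' only records the (true but unhelpful) equivalence that $x^n\cdot v\in V(v,n-1)$ iff $V(v,n-1)=V$; it does not prove either side. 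Indeed, from the hypotheses you use one can only conclude that $x^n$ acts bijectively on $V$ when $p\ge n$, which is not yet a contradiction.

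The missing idea, which is how the paper closes the argument, is to not work with an arbitrary standard element: part~(1) says the nonzero finite-dimensional space $V_{[j]}^{[ti]}$ is invariant under the operator $x^n$, so since $\k$ is algebraically closed one may choose $v\in V_{[j]}^{[ti]}$ to be an \emph{eigenvector} of $x^n$, say $x^n\cdot v=\lambda v$. For this $v$ the set $\{v,x\cdot v,\dots,x^n\cdot v\}$ is automatically linearly dependent, so by Lemma~\ref{lem:standbasis} the largest $k$ with $\{v,\dots,x^k\cdot v\}$ independent is at most $n-1$, and that set is a basis of $V$; hence $\dim_\k V\le n$. With this choice the whole ``$x$-stability of $V(v,n-1)$'' issue evaporates, and no block-diagonal analysis is needed for part~(2).
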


\begin{proof}
Let $v \in V_{[j]}^{[ti]}$. By Proposition \ref{pro:spaniscomodule}(1), we have $x^n \cdot v \in V_{[j-n]} = V_{[j]}$. Furthermore, Proposition \ref{lem:Atijblocks}(3) implies $c_j^{ti}(n, l) = 0$ for all $0 \leq l < n$, and hence
$$ \delta(x^n\cdot v)=\sum\limits_{l=0}^{n} c_j^{ti}(n,l) \otimes x^l \cdot v=c_j^{ti}(n,n)\otimes x^n\cdot v=g^{t(i-n)}\otimes x^n\cdot v=g^{ti}\otimes x^n\cdot v. $$
This shows $x^n \cdot v \in V^{[ti]}$, establishing part (1).

For part (2), let $u$ be a standard element of $V$. By Remark \ref{rmk:onlyti}, $u \in V_{[j]}^{[ti]}$ for some $i, j \in \mathbb{Z}$, so in particular $V_{[j]}^{[ti]} \neq 0$. By part (1), $V_{[j]}^{[ti]}$ is $x^n$-invariant. Since $\Bbbk$ is algebraically closed, the operator $x^n$ has an eigenvector $v \in V_{[j]}^{[ti]}$ with eigenvalue $\lambda \in \Bbbk$, i.e., $x^n \cdot v = \lambda v$. Then the set $\{v, x \cdot v, \dots, x^n \cdot v = \lambda v\}$ is linearly dependent. By Lemma \ref{lem:standbasis}, it follows that $\dim_{\Bbbk} V \leq n$, completing the proof.	
\end{proof}

The second result provides the converse of Proposition \ref{pro:ctijequal0}(2).

\begin{corollary}\label{cor:coversectijequal0}
Let $V$ be a simple Yetter-Drinfeld module over $H$. If  $ V_{[j]}^{[ti]}\neq 0$ for $i, j \in \mathbb{Z}$, then $V$ is finite-dimensional.
\end{corollary}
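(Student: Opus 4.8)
The plan is to argue by contradiction, assuming $V$ is infinite-dimensional, and to exploit the fact that $x^n$ preserves the standard space $V_{[j]}^{[ti]}$ (Corollary \ref{cor:dimensionleqn}(1)), which lets us re-apply the standard-basis lemma for infinite-dimensional simples.

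First, since $V_{[j]}^{[ti]} \neq 0$, fix a nonzero $v \in V_{[j]}^{[ti]}$, which is then a standard element of $V$. If $V$ is finite-dimensional there is nothing to prove, so assume $\dim_\k V = \infty$. By Lemma \ref{thm:standbasisinfinite}, the set $\{x^k \cdot v \mid k \geq 0\}$ is a basis of $V$; in particular its elements are nonzero and linearly independent.

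Next, put $u = x^n \cdot v$. By Corollary \ref{cor:dimensionleqn}(1), $u \in V_{[j]}^{[ti]}$, and $u \neq 0$ since $\{x^k \cdot v \mid k \geq 0\}$ is linearly independent; hence $u$ is again a standard element of $V$, of the same type $(ti, j)$. Applying Lemma \ref{thm:standbasisinfinite} to $u$, the set $\{x^k \cdot u \mid k \geq 0\} = \{x^{k+n} \cdot v \mid k \geq 0\}$ is also a basis of $V$. But this set is contained in the linearly independent family $\{x^k \cdot v \mid k \geq 0\}$ and omits $v = x^0 \cdot v$ (as $n \geq 2 > 0$), so it cannot span $V$ — a contradiction. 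Therefore $V$ is finite-dimensional.

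There is no real obstacle here; the only point needing care is the observation that $x^n \cdot v$ is still a \emph{standard} element of the same type, which is precisely the content of Corollary \ref{cor:dimensionleqn}(1) and is what makes the second application of Lemma \ref{thm:standbasisinfinite} legitimate. Everything else is immediate.
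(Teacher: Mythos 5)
Your proof is correct and follows essentially the same route as the paper: fix a nonzero $v \in V_{[j]}^{[ti]}$, apply Lemma \ref{thm:standbasisinfinite} to get the basis $\{x^k \cdot v \mid k \geq 0\}$, use Corollary \ref{cor:dimensionleqn}(1) to see that $x^n \cdot v$ is again a standard element of the same type, and derive a contradiction from the proper subset $\{x^k \cdot v \mid k \geq n\}$ also being a basis. Your explicit check that $x^n \cdot v \neq 0$ is a small point the paper leaves implicit, but otherwise the arguments coincide.
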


\begin{proof}
Suppose, for contradiction, that $V$ is infinite-dimensional. Fix a nonzero element $v \in V_{[j]}^{[ti]}$. By Lemma \ref{thm:standbasisinfinite}, the set $\{x^k \cdot v \mid k \geq 0\}$ forms a basis for $V$. 

Moreover, Corollary \ref{cor:dimensionleqn}(1) implies $x^n \cdot v \in V_{[j]}^{[ti]}$. Applying Lemma \ref{thm:standbasisinfinite} again, the set $\{x^k \cdot v \mid k \geq n\}$ must also form a basis of $V$. 
However, this leads to a contradiction since $\{x^k \cdot v \mid k \geq n\}$ is a proper subset of $\{x^k \cdot v \mid k \geq 0\}$. 
\end{proof}

\section{Classification of Simple Yetter-Drinfeld Modules over $ H $ }\label{section4}

In this section, we provide a complete classification of the simple Yetter-Drinfeld modules over $H$, explicitly describing their structure and determining all isomorphism relations between them. Subsection~\ref{subsection4.1} focuses on the finite-dimensional case, while Subsection~\ref{subsection4.2} is devoted to the infinite-dimensional simple modules. This classification yields a full description of all simple objects in the Yetter-Drinfeld category over $H$.

\subsection{The Finite-Dimensional Case}\label{subsection4.1}

For any integers $i, j \in \mathbb{Z}$ and scalar $\lambda \in \Bbbk$, we define a finite-dimensional simple Yetter-Drinfeld module $V(ti,j,\lambda)$. Recall from Proposition~\ref{lem:Atijblocks} that $\overline{\mathcal{A}_j^{ti}} = \mathcal{A}_j^{ti}(N - \phi(-i-j))$.

The module $V(ti,j,\lambda)$ is defined as follows:

\begin{itemize}
	\item $\left\{v_0,v_1,\cdots,v_m \right\}  $ is a $ \k $-basis of  $ V(ti,j,\lambda) $, where 
	\begin{equation*}
		m=
		\begin{cases}
			N-\phi(-i-j), & \lambda=0 , \\[5pt]
			n-1 ,         & \lambda \neq 0  .
		\end{cases}
	\end{equation*}
	\item The action of $g$ is defined for $0 \leq k \leq m$ by
	\begin{equation}\label{basisofV}
		g\cdot v_k=\xi^{j-k} v_k;
	\end{equation}
	\item The action of $x$ is defined for $0 \leq k \leq m$ by
	\begin{equation*}
		x\cdot v_k=
		\begin{cases}
			v_{k+1} , & 0\leq k<m  ,\\[5pt]
			\lambda v_0    ,   & k=m  .
		\end{cases}
	\end{equation*}
	\item The coaction is defined for $0 \leq k \leq m$ by
	\begin{equation*}
		\delta(v_k) =\sum\limits_{l=0}^{k} c_j^{ti}(k,l) \otimes v_l.
	\end{equation*}
\end{itemize}

\begin{remark}\label{rmk:Vtijlambda}
{\normalfont Denote $V(ti,j,\lambda)$ by $V$.
\begin{itemize}
	\item [(1)] The indices $i$ and $j$ indicate that $V$ has a standard element of type $(ti,j)$.
	\item [(2)] The scalar $\lambda$ determines the action of $x^n$ on $V$, satisfying $x^n = \lambda \operatorname{id}_V$, i.e., $x^n \cdot v = \lambda v$ for all $v \in V$. Hence, if $\lambda \neq \mu$, then $V(*,*,\lambda)$ is not isomorphic to $V(*,*,\mu)$.
	\item [(3)] The braiding induced by $ V $ is of triangular type.
\end{itemize}}
\end{remark}

\begin{remark}\label{rmk:cannotrealize}
{\normalfont 
Let $ q\in \k $ be an $ m $-th root of unity of order $ d $ and $ \mu \in \k $. In \cite{R75} and \cite{AS98}, Radford and Andruskiewitsch-Schneider have considered the following Hopf algebra $ A(m,d,\mu,q) $, which as an associative algebra is generated by $ g $ and $ x $ with relations
\begin{equation*}
	g^m=1, \quad x^d=\mu(1-g^d), \quad xg=qgx.
\end{equation*}
    Its comultiplication $ \Delta $, counit $ \varepsilon $, and the antipode $ S $ are given by
\begin{equation*}
	\Delta(g)=g\otimes g, \ \varepsilon(g)=1, \ \Delta(x)=x\otimes 1 + g \otimes x, \  \varepsilon(x)=0, \ S(g)=g^{-1}, \ S(x)=-g^{-1}x.
\end{equation*}
The algebra $ A(m,d,\mu,q) $ reduces to the generalized Taft algebra \cite{HCZ04} when $ \mu=0 $  and to the $ m^2 $-dimensional Taft algebra \cite{T71} when $ d=m $.

When $ t $ and $ n $ are coprime,  the module $ V(ti,j,0) $ is in fact a simple Yetter-Drinfeld module over the finite-dimensional Taft algebra $ H(n,t,\xi,0) $ (see Remark \ref{rmk:finiteTaft}(2)), with the same action and coaction structure. Such modules have been classified independently in \cite{C00} and \cite{MBG21}.

However, if $ \lambda \neq 0 $, the module $ V(ti,j,\lambda) $  cannot be realized as a simple Yetter-Drinfeld module over any algebra of the form  $ A(m,d,\mu,q) $. The argument is as follows.

Suppose, for contradiction, that  $ V(ti,j,\lambda) $   is such a module over some  $ A(m,d,\mu,q) $. 
By an argument analogous to Lemma~\ref{lem:standbasis}, any simple Yetter-Drinfeld module $ W $ over  $ A(m,d,\mu,q) $ that admits a standard basis (as in Subsection~\ref{subsection31}) must satisfy $ \dim_{\k} W \leq d $. Moreover, if $ \dim_{\k} W < d $, then $ x^d \cdot v=0 $ for all $ v\in W  $. Consequently, for $ V(ti,j,\lambda) $ we must have $ \dim_{\k}  V(ti,j,\lambda) =d $, which forces $ d=n $.

Now, let $\{v_0, v_1, \dots, v_{n-1}\}$ be the basis of $V(ti,j,\lambda)$ defined in \eqref{basisofV}. Then for any $ 0\leq k \leq n-1 $,
\begin{equation*}
	x^d \cdot v_k =\mu (1-g^d)\cdot v_k=\mu (1-g^n)\cdot v_k=\mu (1-\xi^{(j-k)n})  v_k=0.
\end{equation*}
This implies $ x^n \cdot v=0 $ for all $ v\in V(ti,j,\lambda)  $, contradicting Remark~\ref{rmk:Vtijlambda}(2).	}
\end{remark}

We first prove that the modules constructed above are indeed simple Yetter-Drinfeld modules.
\begin{lemma}\label{lem:yanzhengYDmo}
For any $ i,j\in \Z $ and $ \lambda \in \k $, the module $ V(ti,j,\lambda) $  is a simple Yetter-Drinfeld module over $ H $. 
\end{lemma}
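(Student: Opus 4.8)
The plan is to verify the three Yetter–Drinfeld axioms for $V(ti,j,\lambda)$ and then establish simplicity. For the module structure, I would invoke Proposition~\ref{prop:structureofHmodule}: it suffices to check that $V(ti,j,\lambda)$ is a $\k G$-module (clear from \eqref{basisofV}, since $g^n\cdot v_k=\xi^{(j-k)n}v_k=v_k$) and that $T:=x\cdot(-)$ maps $V_{[j-k]}$ into $V_{[j-k-1]}$ for all $k$. The only subtle point is the ``wrap-around'' relation $x\cdot v_m=\lambda v_0$: when $\lambda=0$ there is nothing to check, and when $\lambda\neq 0$ we have $m=n-1$, and $v_0\in V_{[j]}=V_{[j-n]}=V_{[j-(m+1)]}$, so the degree shift is still consistent modulo $n$. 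Thus $(V,T)$ defines an $H$-module.

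For the comodule structure, the coaction is prescribed by the matrix $\mathcal{A}_j^{ti}(m)$, which is a comatrix by Corollary~\ref{cor:Aijisacomatrix}; hence by Lemma~\ref{lem:comoduleequicodition} $V(ti,j,\lambda)$ is a left $H$-comodule. Here one must be slightly careful in the case $\lambda\neq 0$, $m=n-1$: the coaction is $\delta(v_k)=\sum_{l=0}^k c_j^{ti}(k,l)\otimes v_l$ for $0\le k\le n-1$, and this is exactly the $n\times n$ comatrix $\mathcal{A}_j^{ti}(n-1)$, so no extra verification is needed. For the compatibility condition \eqref{compatibilitycondition}, I would use Lemma~\ref{lem:compatiable}: since $\{g,x\}$ generates $H$ as an algebra, it is enough to check \eqref{compatibilitycondition} for $h=g$ and $h=x$. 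For $h=g$, using $\delta(g\cdot v_k)=\delta(\xi^{j-k}v_k)$ and the comatrix entries $c_j^{ti}(k,l)=\lambda_j^{ti}(k,l)x^{k-l}g^{t(i-k)}$ from Proposition~\ref{pro:cijformula}, one computes $g_{(1)}(v_k)_{(-1)}S(g_{(3)})\otimes g_{(2)}\cdot(v_k)_{(0)}=\sum_l g\,c_j^{ti}(k,l)\,g^{-1}\otimes \xi^{j-l}v_l$ and checks this equals $\xi^{j-k}\sum_l c_j^{ti}(k,l)\otimes v_l$; this reduces to the commutation $g\,x^{k-l}g^{-1}=\xi^{-(k-l)}x^{k-l}$ together with the bookkeeping $\xi^{j-l}\cdot\xi^{-(k-l)}=\xi^{j-k}$. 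For $h=x$, the computation is precisely the one carried out in the proof of Proposition~\ref{pro:spaniscomodule}(2): the recursion \eqref{coefficientckl} defining $c_j^{ti}(k+1,l)$ was derived exactly so that \eqref{compatibilitycondition} holds for $x$ acting on $v_k=x^k\cdot v_0$; the only place this needs a separate look is $k=m$, where $x\cdot v_m=\lambda v_0$ rather than $v_{m+1}$, but there $c_j^{ti}(m+1,l)=0$ for $0\le l\le m$ by Proposition~\ref{pro:ctijequal0}(1)/Remark~\ref{rmk:Atijblock} (when $\lambda=0$ trivially, and when $\lambda\neq0$, $m=n-1$ and $c_j^{ti}(n,l)=0$ for $l<n$ by Proposition~\ref{lem:Atijblocks}(3)), so both sides collapse consistently.

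For simplicity, let $W\subseteq V(ti,j,\lambda)$ be a nonzero subcomodule that is also a submodule. Since $V(ti,j,\lambda)=\bigoplus_k \k v_k$ with $g\cdot v_k=\xi^{j-k}v_k$ and the $\xi^{j-k}$ pairwise distinct as $k$ ranges over $\{0,\dots,m\}$ (because $m\le n-1$), Lemma~\ref{lem:kGmoduleproperty} forces $W$ to be spanned by a subset of $\{v_0,\dots,v_m\}$. In particular $W$ contains some $v_k$. Now $x^l\cdot v_k=v_{k+l}$ for as long as the index stays $\le m$, and then $x\cdot v_m=\lambda v_0$; if $\lambda\neq 0$ this immediately gives all $v_0,\dots,v_{n-1}\in W$, so $W=V$. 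If $\lambda=0$, applying $x$ repeatedly gives $v_k,v_{k+1},\dots,v_m\in W$; to get the lower-indexed basis vectors, one uses the comodule structure: $\delta(v_k)=\sum_{l=0}^k c_j^{ti}(k,l)\otimes v_l$ with $c_j^{ti}(k,l)=\lambda_j^{ti}(k,l)x^{k-l}g^{t(i-k)}$ and $\lambda_j^{ti}(k,l)\neq 0$ for all $0\le l\le k\le m$ by Remark~\ref{rmk:Atijblock}(2); since $\{x^{k-l}g^{t(i-k)}\}_l$ are linearly independent in $H$, subcomodularity of $W$ forces $v_l\in W$ for all $l\le k$. Combining, $W=V(ti,j,\lambda)$, so the module is simple.

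The main obstacle I anticipate is not any single deep step but rather the careful handling of the $\lambda\neq 0$ case throughout: one must repeatedly confirm that the ``closing'' relation $x\cdot v_{n-1}=\lambda v_0$ is compatible with the degree grading modulo $n$, with the vanishing $c_j^{ti}(n,l)=0$ for $l<n$, and with the comatrix being the full $n\times n$ block $\mathcal{A}_j^{ti}(n-1)$ — all of which ultimately rest on Proposition~\ref{lem:Atijblocks}. The verification of \eqref{compatibilitycondition} for $h=x$ is where the bulk of the (routine but lengthy) computation lives, and the cleanest route is to observe that it is formally identical to the induction already performed in Proposition~\ref{pro:spaniscomodule}, only needing the boundary case $k=m$ to be reconciled via the vanishing of $c_j^{ti}(m+1,l)$.
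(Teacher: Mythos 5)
Your proposal is correct and follows essentially the same route as the paper's proof: comodule structure via Corollary~\ref{cor:Aijisacomatrix} and Lemma~\ref{lem:comoduleequicodition}, module structure via Proposition~\ref{prop:structureofHmodule}, compatibility checked only on the generators $g,x$ via Lemma~\ref{lem:compatiable} with the boundary case $k=m$ settled by the vanishing of $c_j^{ti}(m+1,l)$, and simplicity via Lemma~\ref{lem:kGmoduleproperty} plus the non-vanishing of the $\lambda_j^{ti}(k,l)$ (for $\lambda=0$) or the wrap-around relation (for $\lambda\neq 0$). The only cosmetic caveat is that citing Proposition~\ref{pro:ctijequal0}(1) for the vanishing of $c_j^{ti}(m+1,l)$ would be circular (it presupposes a simple Yetter-Drinfeld module), but you also give the correct non-circular references (Remark~\ref{rmk:Atijblock} and Proposition~\ref{lem:Atijblocks}(3)), exactly as the paper does.
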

\begin{proof}
Let $\{v_0, v_1, \dots, v_m\}$ be the basis of $V(ti,j,\lambda)$ defined in \eqref{basisofV}, and denote $V(ti,j,\lambda)$ by $V$. We first show that $V$ is a Yetter-Drinfeld module.

By Lemma~\ref{lem:comoduleequicodition} and Corollary~\ref{cor:Aijisacomatrix}, $V$ is a left $H$-comodule. Clearly, $V$ is also a left $\Bbbk G$-module, and $\operatorname{span}\{v_k\} = V_{[j-k]}$ for all $0 \leq k \leq m$. By Proposition~\ref{prop:structureofHmodule}, $H$ is a left $H$-module.	
It remains to verify the compatibility condition \eqref{compatibilitycondition} for all $h \in H$ and $v \in V$. Define
\[
\rho(h \cdot v) = h_{(1)} v_{(-1)} S(h_{(3)}) \otimes h_{(2)} \cdot v_{(0)}.
\]	
By Lemma~\ref{lem:compatiable}, it is enough to show that for all $0 \leq k \leq m$,	
\begin{eqnarray}
	\delta(g \cdot v_k) &= \rho(g \cdot v_k), \label{gcompatib} \\
	\delta(x \cdot v_k) &= \rho(x \cdot v_k). \label{xcompatib}
\end{eqnarray}

For~\eqref{gcompatib}, we have
\begin{eqnarray*}
\rho (g\cdot v_k) 
 &=&\sum\limits_{l=0}^{k} gc_j^{ti}(k,l) g^{-1} \otimes g \cdot v_l    \\
 &=&\sum\limits_{l=0}^{k} g \left(\lambda_j^{ti}(k,l) x^{k-l}g^{i-kt} \right) g^{-1} \otimes g \cdot v_l   \\
 &=&\sum\limits_{l=0}^{k} \xi^{-(k-l)} \lambda_j^{ti}(k,l) x^{k-l}g^{i-kt}  \otimes \xi^{j-l} v_l    \\
 &=&\xi^{j-k} \sum\limits_{l=0}^{k}  c_j^{ti}(k,l)   \otimes  v_l    \\[3pt]
 &=&\delta(g\cdot v_k),
\end{eqnarray*}
so the condition holds.

To verify~\eqref{xcompatib}, denote $x \cdot v_m$ by $v_{m+1}$. Using the definition of $c_j^{ti}(k,l)$, we compute
\begin{eqnarray*}
	& &\rho(x \cdot v_k) \\
	&=&\sum\limits_{l=0}^k \left(c_j^{ti}(k,l)S(x)\otimes 1\cdot v_l+ c_j^{ti}(k,l)S(g^t)\otimes x\cdot v_l+xc_j^{ti}(k,l)S(g^t)\otimes g^t\cdot v_l \right) \\
	&=&\sum\limits_{l=0}^{k+1} c_j^{ti}(k+1,l) \otimes v_l
\end{eqnarray*}
for all $0 \leq k \leq m$. This establishes~\eqref{xcompatib} for $0 \leq k < m$.

For $k = m$, note that $c_j^{ti}(m+1, l) = 0$ for all $0 \leq l \leq m$ by Remark~\ref{rmk:Atijblock}(3) (if $m = N - \phi(-i-j)$) or by Proposition~\ref{lem:Atijblocks}(3) (if $m = n-1$). Then
\begin{eqnarray*}
       \rho(x \cdot v_m)
    &=&\sum\limits_{l=0}^{m+1} c_j^{ti}(m+1,l) \otimes v_l \\
	&=&g^{ti-t(m+1)}\otimes v_{m+1} \\[6pt]
	&=&\begin{cases}
		0    &   \lambda =0  \\
		g^{ti}\otimes \lambda v  & \lambda \neq 0 
	\end{cases},
\end{eqnarray*}
and it follows that $\delta(x \cdot v_m) = \rho(x \cdot v_m)$. Therefore, the compatibility condition holds, and $V$ is a Yetter-Drinfeld module.

We now prove that $V$ is simple. Let $W$ be a nonzero Yetter-Drinfeld submodule of $V$; we show that $W = V$. Choose $0 \neq v \in W$. By Lemma~\ref{lem:kGmoduleproperty}, we may assume $v = v_k$ for some $0 \leq k \leq m$.

\noindent \textbf{Case 1: $\lambda = 0$.} Then $\mathcal{A}_j^{ti}(m) = \overline{\mathcal{A}_j^{ti}}$. Since $W$ is a subcomodule,
\[
\delta(v_k) = \sum_{l=0}^k c_j^{ti}(k, l) \otimes v_l \in H \otimes W.
\]
By Remark~\ref{rmk:Atijblock}(2), $\lambda_j^{ti}(k, l) \neq 0$ for all $0 \leq l \leq k$, so the set $\{c_j^{ti}(k, 0), c_j^{ti}(k, 1), \dots, c_j^{ti}(k, k)\}$ is linearly independent. Hence, $v_0 \in W$, and thus $v_l = x^l \cdot v_0 \in W$ for all $0 \leq l \leq m$. Therefore, $V = W$.

\noindent \textbf{Case 2: $\lambda \neq 0$.} Then
\[
v_0 = \frac{1}{\lambda} x^{n-k} \cdot v_k \in W,
\]
so again $V = W$.

This completes the proof.
\end{proof}

We next prove that the modules constructed above exhaust all finite-dimensional simple Yetter-Drinfeld modules.

\begin{proposition}\label{prop:finiteisotoV}
Every finite-dimensional simple Yetter-Drinfeld module $V$ over $H$ is isomorphic to $V(ti,j,\lambda)$ for some  $i, j \in \mathbb{Z}$ and  $\lambda \in \k$.	
\end{proposition}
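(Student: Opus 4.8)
The plan is to start from an arbitrary finite-dimensional simple Yetter-Drinfeld module $V$ over $H$ and produce an explicit isomorphism onto one of the standard modules $V(ti,j,\lambda)$. By Lemma~\ref{lem:socV}, $V$ contains a standard element, and by Remark~\ref{rmk:onlyti} (which rests on Proposition~\ref{pro:ctijequal0}(2)) we may choose a standard element $v \in V_{[j]}^{[ti]}$ for suitable $i,j \in \mathbb{Z}$. Set $p = \dim_{\k} V - 1$; by Lemma~\ref{thm:standbasis} the set $\{v, x\cdot v, \dots, x^p\cdot v\}$ is a standard basis of $V$. Writing $v_k = x^k\cdot v$, the action of $g$ is $g\cdot v_k = \xi^{j-k} v_k$, the coaction is $\delta(v_k) = \sum_{l=0}^k c_j^{ti}(k,l)\otimes v_l$ (Remark after Definition~\ref{def:ckl}), and $x\cdot v_k = v_{k+1}$ for $k<p$ while $x\cdot v_p = \sum_{l=0}^p a_l v_l$ for some scalars $a_l$. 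So the entire module is pinned down by the triple $(ti,j)$ together with the vector $(a_0,\dots,a_p)$, and the task reduces to showing this data is forced into one of the two shapes appearing in the definition of $V(ti,j,\lambda)$.

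The first step is to pin down $p$ and the $a_l$ using the comatrix analysis of Section~\ref{section3}. Applying $\delta$ to the relation $x\cdot v_p = \sum_l a_l v_l$ in two ways — once via $\delta(x^{p+1}\cdot v)$ using \eqref{deltax^kv}, once via the recursion \eqref{coefficientckl} — gives, exactly as in the proof of Proposition~\ref{pro:ctijequal0}, the equations $\sum_{k=l}^p a_k c_j^{ti}(k,l) = c_j^{ti}(p+1,l) + a_l c_j^{ti}(p+1,p+1)$ for $0\le l\le p$. Now there are two cases according to whether $p$ equals $m := N-\phi(-i-j)$ (the size index of $\overline{\mathcal{A}_j^{ti}}$) or is larger. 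If $p = m$, then $c_j^{ti}(p+1,l)=0$ for all $0\le l\le p$ by Remark~\ref{rmk:Atijblock}(3), and comparing powers of $x$ (using Proposition~\ref{pro:cijformula}) forces $a_l=0$ for $l<p$, i.e.\ $x\cdot v_p = a_p v_p$; but $x$ lowers the $[\cdot]$-grading by one (Proposition~\ref{prop:structureofHmodule}) while $v_p\in V_{[j-p]}$, so $a_p v_p\in V_{[j-p-1]}$, forcing $a_p=0$ as well. Thus in this case $x\cdot v_p = 0$ and $V\cong V(ti,j,0)$ via $v_k\mapsto v_k$. If instead $p > m$, then Corollary~\ref{cor:dimensionleqn}(2) gives $p\le n-1$, and one argues (again comparing $x$-exponents in the displayed equations, using that the diagonal entries $c_j^{ti}(k,k)=g^{i-kt}$ are units and that $\lambda_j^{ti}(p+1,l)$ vanishes for $l\le m$ but the off-diagonal structure forces the surviving $a_l$ to vanish unless $l=0$) that $x\cdot v_p = a_0 v_0$; one then shows $a_0\neq 0$, since $a_0 = 0$ would make $V(v,p-1)$ a proper submodule, contradicting simplicity. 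Setting $\lambda = a_0$ and rescaling the $v_k$ appropriately (or directly, after one checks $p = n-1$ forces $\lambda\ne 0$), gives $V\cong V(ti,j,\lambda)$ with $\lambda\ne 0$.

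For the isomorphism itself, once $p$, the grading data, and the top relation are known to coincide with those of the appropriate $V(ti,j,\lambda)$, the linear map sending the standard basis of $V$ to the standard basis of $V(ti,j,\lambda)$ is automatically $\k G$-linear (same $g$-eigenvalues), $x$-linear (same formulas for $x\cdot v_k$), and $H$-colinear (same comatrix $\mathcal{A}_j^{ti}(p)$), hence a morphism of Yetter-Drinfeld modules; being a bijection on bases, it is an isomorphism. It remains only to record that the dimension $p+1$ is indeed consistent with the definition: $p = m$ when $\lambda = 0$ and $p = n-1$ when $\lambda\ne 0$ — the former by construction, the latter because $x^n$ acts as the scalar $\lambda\,\mathrm{id}$ by Corollary~\ref{cor:dimensionleqn}(1) together with Remark~\ref{rmk:Vtijlambda}(2), and a nonzero scalar action of $x^n$ with $\dim_\k V \le n$ is only compatible with $\dim_\k V = n$.

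The main obstacle I anticipate is the bookkeeping in the case $p > m$: one must extract from the system $\sum_{k=l}^p a_k c_j^{ti}(k,l) = c_j^{ti}(p+1,l) + a_l c_j^{ti}(p+1,p+1)$ — where many but not all of the $c_j^{ti}(k,l)$ vanish by the block decomposition of Lemma~\ref{lem:comatrixdecomposition} and Proposition~\ref{lem:Atijblocks} — that the only surviving coefficient is $a_0$, and that it is nonzero. This requires carefully tracking, via Proposition~\ref{pro:cijformula}, which monomials $x^{k-l}g^{i-kt}$ coincide for different $k$ (they coincide precisely when the $x$-exponents agree modulo nothing, i.e.\ never for distinct $k$ with $l$ fixed, so in fact the comparison of $x$-exponents is clean) and then separately invoking the grading constraint $x\cdot v_p \in V_{[j-p-1]}$ to kill all but the one admissible term. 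The rest of the argument is formal.
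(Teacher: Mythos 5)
Your overall strategy --- pick a standard element $v\in V_{[j]}^{[ti]}$, form the standard basis $\{v_k=x^k\cdot v\}$ via Lemma \ref{thm:standbasis}, and pin down the top relation $x\cdot v_p=\sum_l a_l v_l$ from the coaction equations --- is the same as the paper's, but two steps in your execution are wrong. First, in the case $p=m$ you have the indices reversed. The equation $\sum_{k=l}^p a_k c_j^{ti}(k,l)=a_l\,c_j^{ti}(p+1,p+1)$, read through Proposition \ref{pro:cijformula}, forces the coefficient of $x^{k-l}$ to vanish for every $k>l$; taking $l=0$ and using Remark \ref{rmk:Atijblock}(2) this kills $a_k$ for all $k\geq 1$, so the one coefficient that can survive is $a_0$, not $a_p$. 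Your grading argument must then be applied to $a_0v_0\in V_{[j]}$ against $x\cdot v_p\in V_{[j-p-1]}$, and it kills $a_0$ only when $p+1\not\equiv 0\pmod n$. When $N=n$ and $\phi(-i-j)=1$ one has $m=n-1$, and the modules $V(ti,j,\lambda)$ with $\lambda\neq 0$ then fall into your case $p=m$ with $a_0=\lambda\neq 0$; your stated conclusion ``$x\cdot v_p=0$ and $V\cong V(ti,j,0)$'' is false for them (these are non-isomorphic modules by Remark \ref{rmk:Vtijlambda}(2)). Second, in the case $p>m$ your justification that $a_0\neq 0$ does not work: if $a_0=0$, the subspace $V(v,p-1)=\span\{v_0,\dots,v_{p-1}\}$ is not $x$-stable (it does not contain $x\cdot v_{p-1}=v_p$), so it is not a submodule of $V$. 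The proper submodule that actually arises when $x^{p+1}\cdot v=0$ and $p>m$ is the tail $\span\{x^{m+1}\cdot v,\dots,x^{p}\cdot v\}$: by the block decomposition of Lemma \ref{lem:comatrixdecomposition} the element $x^{m+1}\cdot v$ is again standard, so this tail is a subcomodule, and it is $x$-stable because $x\cdot(x^{p}\cdot v)=0$.

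The paper avoids solving the coaction system altogether by splitting on whether $x^{p+1}\cdot v$ vanishes, rather than on whether $p=m$ or $p>m$. If $x^{p+1}\cdot v=0$, it shows $p=m$ (ruling out $p<m$ via Proposition \ref{pro:ctijequal0} and $p>m$ via the tail-submodule argument above), whence $V\cong V(ti,j,0)$. If $x^{p+1}\cdot v\neq 0$, the $\k G$-grading $V=\bigoplus_{k=0}^{p}V_{[j-k]}$ alone forces $p=n-1$ and $x^{n}\cdot v\in V_{[j]}=\span\{v\}$, i.e.\ $x^{n}\cdot v=\lambda v$ with $\lambda\neq 0$. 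Recasting your argument along this dichotomy removes both problems; as written, the proof has genuine errors in the two places indicated.
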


\begin{proof}
Assume $\dim_{\Bbbk} V = p + 1$. By Corollary~\ref{cor:dimensionleqn}(2), we have $p \leq n - 1$. Let $v$ be a standard element of $V$; by Remark~\ref{rmk:onlyti}, $v \in V_{[j]}^{[ti]}$ for some $i, j \in \mathbb{Z}$. Lemma~\ref{thm:standbasis} implies that $\{v, x \cdot v, \dots, x^p \cdot v\}$ is a basis of $V$.

\noindent\textbf{Case 1:} $ x^{p+1} \cdot v=0 $.  
Let $m = N - \phi(-i-j)$. By Proposition~\ref{lem:Atijblocks}, we have $\overline{\mathcal{A}_j^{ti}} = \mathcal{A}_j^{ti}(m)$. We claim that $p = m$.

First, suppose $p < m$. Then Remark~\ref{rmk:Atijblock}(2) implies $c_j^{ti}(p+1, 0) \neq 0$, contradicting Proposition~\ref{pro:ctijequal0}. Hence, $p \geq m$.
Now suppose $p > m$. By Proposition~\ref{lem:Atijblocks}, $x^{m+1} \cdot v$ is also a standard element of $V$. Then the subspace $\operatorname{span}\{x^{m+1} \cdot v, \dots, x^p \cdot v\}$ is a Yetter-Drinfeld submodule of $V$, contradicting the simplicity of $V$. Therefore, $p = m$.

Let $\{v_0, v_1, \dots, v_m\}$ be the basis of $V(ti, j, 0)$ defined in~\eqref{basisofV}. Define a linear map $f: V(ti, j, 0) \to V$ by $f(v_k) = x^k \cdot v$ for all $0 \leq k \leq m$. It is straightforward to verify that $f$ is an isomorphism of Yetter-Drinfeld modules, so $V \cong V(ti, j, 0)$.

\noindent\textbf{Case 2:} $ x^{p+1} \cdot v\neq 0 $.  Note that
\begin{equation*}
V=\bigoplus\limits_{k=0}^{p} \span \left\{x^k \cdot v\right\}=\bigoplus\limits_{k=0}^{p} V_{[j-k]} .
\end{equation*}
If $p < n - 1$, then $V_{[j-(p+1)]} = 0$ because $[j - (p+1)] \neq [j - k]$ for all $0 \leq k \leq p$. This implies $x^{p+1} \cdot v = 0$, a contradiction. Therefore, $p = n - 1$ and
\[
x^{p+1} \cdot v = x^n \cdot v \in V_{[j - n]} = V_{[j]} = \operatorname{span}\{v\}.
\]
Thus, $x^n \cdot v = \lambda v$ for some nonzero $\lambda \in \Bbbk$.

Let $\{v_0, v_1, \dots, v_{n-1}\}$ be the basis of $V(ti, j, \lambda)$ defined in~\eqref{basisofV}. Define a linear map $f: V(ti, j, \lambda) \to V$ by $f(v_k) = x^k \cdot v$ for all $0 \leq k \leq n - 1$. Again, $f$ is an isomorphism of Yetter-Drinfeld modules, completing the proof.
\end{proof}

We now analyze the isomorphism relations among these simple modules.
Two Yetter-Drinfeld modules $V$ and $W$ over $H$ are said to have standard elements of the same type if there exist integers $i, j \in \mathbb{Z}$ such that both $V_{[j]}^{[i]} \neq 0$ and $W_{[j]}^{[i]} \neq 0$.

The following lemma is immediate and states that each simple module is uniquely determined by the parameter $\lambda$ and the type of its standard elements.

\begin{lemma}\label{lem:isomorphism}
For any $i, j, r, s \in \mathbb{Z}$ and $\lambda, \mu \in \Bbbk$, the Yetter-Drinfeld modules $V(ti,j,\lambda)$ and $V(tr,s,\mu)$ are isomorphic if and only if they have  standard elements of the same type and $\lambda = \mu$.
\end{lemma}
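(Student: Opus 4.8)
The plan is to prove both directions by exploiting the two invariants already isolated in Remarks \ref{rmk:Vtijlambda}(1) and \ref{rmk:Vtijlambda}(2): the type of a standard element and the scalar by which $x^n$ acts. For the ``only if'' direction, suppose $\varphi : V(ti,j,\lambda) \to V(tr,s,\mu)$ is an isomorphism of Yetter-Drinfeld modules. Since $\varphi$ is in particular $H$-linear, it intertwines the action of $x^n$; by Remark \ref{rmk:Vtijlambda}(2) we have $x^n = \lambda\,\mathrm{id}$ on the source and $x^n = \mu\,\mathrm{id}$ on the target, so for any $0 \neq v$ we get $\mu\varphi(v) = \varphi(x^n\cdot v) = \varphi(\lambda v) = \lambda\varphi(v)$, forcing $\lambda = \mu$. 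For the type, pick the standard element $v_0 \in V(ti,j,\lambda)$, which lies in $V(ti,j,\lambda)_{[j]}^{[ti]}$ by construction. Because $\varphi$ preserves both the $G$-action (an eigenvector for $g$ with eigenvalue $\xi^{j}$ maps to one with the same eigenvalue) and the coaction (an element with $\delta(-) = g^{ti}\otimes(-)$ maps to one with the same coaction), $\varphi(v_0)$ is a nonzero element of $V(tr,s,\mu)_{[j]}^{[ti]}$. Hence $V(ti,j,\lambda)$ and $V(tr,s,\mu)$ have standard elements of the same type.

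For the ``if'' direction, assume $\lambda = \mu$ and that both modules have a nonzero element in $V_{[j']}^{[i']}$ for some common pair $(i',j')$. By Proposition \ref{pro:ctijequal0}(2) and Remark \ref{rmk:onlyti}, this common coacting group-like must be of the form $g^{ti'}$, i.e. $i' \equiv tr' \pmod n$ for some $r'$; and since the standard element $v_0$ of $V(ti,j,\lambda)$ has type $(ti,j)$, the fact that $V(ti,j,\lambda)_{[j']}^{[i']}\neq 0$ together with $\dim_{\k}V(ti,j,\lambda)\le n$ and the block structure of $\overline{\A}$ will pin down $i' \equiv ti \pmod n$ and $j' \equiv j \pmod{N}$ — more precisely, the $G$-weights occurring in $V(ti,j,\lambda)$ are exactly $\{\xi^{j},\xi^{j-1},\dots\}$ and the coacting group-likes occurring are governed by Lemma \ref{lem:comatrixdecomposition}/Proposition \ref{lem:Atijblocks}, so a common occurring type $(i',j')$ forces $tr \equiv ti \pmod n$ and $s \equiv j \pmod{N}$. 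Then by Remark following Definition \ref{def:ckl} (the periodicity $\A_{j}^{ti}(p) = \A_{s}^{tr}(p)$ when $i \equiv r$, $j \equiv s$ in the appropriate moduli) the two modules have literally the same defining data: same dimension $m$, same $g$-action \eqref{basisofV}, same $x$-action with the same $\lambda$, and same coaction coefficients $c_j^{ti}(k,l) = c_s^{tr}(k,l)$. The identity map on basis vectors $v_k \mapsto v_k$ is then the desired isomorphism of Yetter-Drinfeld modules.

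The main obstacle is the bookkeeping in the ``if'' direction: one must argue carefully that the \emph{type} of a standard element determines the full defining tuple $(ti \bmod n,\ j \bmod N)$, not merely that some common weight occurs. The subtlety is that $V(ti,j,\lambda)$ contains several $G$-weight spaces and several coacting group-likes, so ``having a standard element of the same type'' a priori only yields one coincidence; one needs the structural results of Subsection \ref{subsection3} (that the socle consists precisely of the standard elements, Lemma \ref{lem:socV}, and that all standard elements of a finite-dimensional simple module have coacting group-like of the fixed form $g^{ti}$ with a single value of $i \bmod N$, via Proposition \ref{lem:Atijblocks}) to promote this to the full statement. Once that reduction is in place, the construction of the isomorphism is immediate from the periodicity of $\A_j^{ti}(p)$, since the two modules are then given by identical formulas. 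I would phrase this cleanly by first observing that the set of types of standard elements of $V(ti,j,\lambda)$ is an explicit single $\{(\,ti+k \ (\mathrm{mod}\ n,\, \cdot\,)\}$-style orbit determined by $(ti\bmod n, j\bmod N)$, so that agreement on one type forces agreement on the generating data.
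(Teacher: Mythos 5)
Your ``only if'' direction is correct and coincides with the paper's argument. The ``if'' direction, however, contains a genuine gap: you assert that $V(ti,j,\lambda)_{[j']}^{[i']}\neq 0$ pins down $i'\equiv ti \pmod n$ and $j'\equiv j\pmod N$, so that a shared type forces the two modules to have the same defining data, after which the identity map on basis vectors is an isomorphism. This is false when $\lambda\neq 0$. By Lemma \ref{lem:socVtij}, if $p=N-\phi(-i-j)+1<N$ then $\soc(V(ti,j,\lambda))$ contains the summands $V_{[(j-p)-kN]}^{[t(i-p)]}=\span\{v_{kN+p}\}$, whose coacting group-like is $g^{t(i-p)}\neq g^{ti}$ and whose weight is $\not\equiv j\pmod N$. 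Consequently $V(ti,j,\lambda)$ and $V(t(i-p),j-p,\lambda)$ share a standard element of the same type even though their parameters disagree modulo $N$ (this is exactly case (3)(b) of Proposition \ref{prop:isocondition}); these modules are isomorphic, but not via $v_k\mapsto v_k$ --- the isomorphism must cyclically shift the standard basis by $p$ positions, which is only possible because $x^n=\lambda\,\mathrm{id}$ with $\lambda\neq 0$. So your construction of the isomorphism does not cover all cases in which the hypotheses of the lemma hold.

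The fix is to avoid matching parameters altogether. The paper's route: let $(tq,p')$ be a common type, pick standard elements $v\in V(ti,j,\lambda)_{[p']}^{[tq]}$ and $w\in V(tr,s,\mu)_{[p']}^{[tq]}$, and rerun the argument of Proposition \ref{prop:finiteisotoV} on each module starting from \emph{these} elements; since $\lambda=\mu$, both modules are then isomorphic to the single model $V(tq,p',\lambda)$, hence to each other. In other words, the classification proof already shows that a finite-dimensional simple Yetter--Drinfeld module is determined up to isomorphism by the type of \emph{any one} of its standard elements together with the $x^n$-eigenvalue, and that is the statement you actually need --- not the (false) claim that all standard elements of $V(ti,j,\lambda)$ have type congruent to $(ti,j)$.
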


\begin{proof}
Let $V = V(ti,j,\lambda)$ and $W = V(tr,s,\mu)$.

\noindent ($\Rightarrow$) Suppose $V \cong W$, and let $f: V \to W$ be an isomorphism. Since $f$ preserves both the action and coaction structures, $f(V_{[p]}^{[q]}) \subseteq W_{[p]}^{[q]}$ for all $p, q \in \mathbb{Z}$.  Hence, $V$ and $W$ have standard elements of the same type.

Now choose a nonzero element $v \in V$. By Remark~\ref{rmk:Vtijlambda}(2), we have:
\[
\lambda f(v) = f(x^n \cdot v) = x^n \cdot f(v) = \mu f(v).
\]
Since $f(v) \neq 0$, it follows that $\lambda = \mu$.	
	
\noindent ($\Leftarrow$) Conversely, suppose $V$ and $W$ have standard elements of the same type and $\lambda = \mu$. By Remark~\ref{rmk:onlyti}, there exist $p, q \in \mathbb{Z}$ such that $V_{[p]}^{[tq]} \neq 0$ and $W_{[p]}^{[tq]} \neq 0$. Since $V$ is simple, the proof of Proposition~\ref{prop:finiteisotoV} implies $V \cong V(tq, p, \lambda)$. Similarly, $W \cong V(tq, p, \lambda)$. Therefore, $V \cong W$, as required.	
\end{proof}

To clarify the isomorphism relations among the simple modules, it suffices to determine their standard elements. Let $V = V(ti,j,\lambda)$ with basis $\{v_0, v_1, \dots, v_m\}$ as defined in \eqref{basisofV}. By Lemma~\ref{lem:socV}(2), this reduces to computing the decomposition of $\soc(V)$. Note that
\begin{equation*}
V=\bigoplus\limits_{k=0}^{m} \span \left\{v_k\right\}=\bigoplus\limits_{k=0}^{m} V_{[j-k]} .
\end{equation*}
In particular, for any $1 \leq k \leq n$, we have $\dim_{\Bbbk} V_{[k]} \leq 1$. Therefore, if $V_{[p]}^{[q]} \neq 0$ for some $p, q \in \mathbb{Z}$, it follows that $V_{[p]}^{[q]} = V_{[p]}$. Consequently, we only need to find all $v_k$ such that $\operatorname{span}\{v_k\}$ is a subcomodule. This can be done by examining the comatrix $\mathcal{A}_j^{ti}(m)$; see Proposition~\ref{lem:Atijblocks} for details. We thus obtain the following result.

\begin{lemma}\label{lem:socVtij}
For any $ i,j\in \Z $ and $  \lambda \in \k $, let $p=N-\phi(-i-j)+1 $ and $ d=\frac{n}{N} $. Let $V = V(ti,j,\lambda)$ with basis $\{v_0, v_1, \dots, v_m\}$ as defined in \eqref{basisofV}. Then
\begin{equation*}
\soc (V)=
\begin{cases}
V_{[j]}^{[ti]},   &  \lambda=0 , \\[12pt]
\bigoplus\limits_{k=0}^{d-1} V_{[j-kN]}^{[ti]} , &  \lambda\neq 0 \  \operatorname{and} \ N-\phi(-i-j)=N-1 ,  \\[12pt]
\bigoplus\limits_{k=0}^{d-1}\left(V_{[j-kN]}^{[ti]} \bigoplus V_{[(j-p)-kN]}^{[t(i-p)]} \right) , \qquad &  \lambda\neq 0 \  \operatorname{and} \ N-\phi(-i-j)<N-1  .
\end{cases}
\end{equation*}
where  $ V_{[j-kN]}^{[ti]}=\span \left\{ v_{kN}\right\} $ and $ V_{[(j-p)-kN]}^{[t(i-p)]}=\span \left\{ v_{kN+p}\right\} $  for all $ 0\leq k \leq d-1 $.
\end{lemma}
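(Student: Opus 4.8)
The plan is to combine three ingredients already established: first, $\soc(V)$ is the direct sum of the subspaces $V_{[p]}^{[q]}$ (Lemma~\ref{lem:socV}(2)); second, in $V = V(ti,j,\lambda)$ every weight space $V_{[p]}$ is at most one-dimensional, and more precisely $V_{[j-k]} = \operatorname{span}\{v_k\}$ for $0 \le k \le m$, with all other weight spaces zero; third, the precise block structure of the comatrix $\mathcal{A}_j^{ti}(m)$ given in Proposition~\ref{lem:Atijblocks}. Since each $V_{[p]}$ is at most one-dimensional, a nonzero $v_k$ spans a subcomodule precisely when $\delta(v_k) = c_j^{ti}(k,k) \otimes v_k$, i.e. when $c_j^{ti}(k,l) = 0$ for all $l < k$; equivalently, by Proposition~\ref{pro:cijformula} and Remark~\ref{rmk:Atijblock}(2), when $\lambda_j^{ti}(k,l)=0$ for $l<k$. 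So the task reduces to reading off, from the block decomposition of $\mathcal{A}_j^{ti}(m)$, exactly which rows $k$ have all strictly-subdiagonal entries equal to zero, and then identifying the corresponding type $(q,p)$ via $c_j^{ti}(k,k) = g^{ti-kt}$ together with the weight $g\cdot v_k = \xi^{j-k} v_k$.

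First I would handle the case $\lambda = 0$: here $m = N - \phi(-i-j)$, so $\mathcal{A}_j^{ti}(m) = \overline{\mathcal{A}_j^{ti}}$ is a single block whose lower-triangular entries are all nonzero by Remark~\ref{rmk:Atijblock}(2). Hence the only $k$ with vanishing subdiagonal is $k=0$, giving $\soc(V) = \operatorname{span}\{v_0\} = V_{[j]}^{[ti]}$, as claimed. Next, for $\lambda \neq 0$ we have $m = n-1$ and $\mathcal{A}_j^{ti}(n-1)$ decomposes, by Proposition~\ref{lem:Atijblocks}(3) applied with $p = d$ (so $pN - 1 = n - 1$), into $d$ diagonal copies of $\mathcal{A}_j^{ti}(N-1)$; and $\mathcal{A}_j^{ti}(N-1)$ itself is, by Proposition~\ref{lem:Atijblocks}(2), either the single block $\overline{\mathcal{A}_j^{ti}}$ (when $N - \phi(-i-j) = N-1$) or the two-block matrix $\operatorname{diag}(\overline{\mathcal{A}_j^{ti}}, \overline{\mathcal{A}_{j-p}^{t(i-p)}})$ with $p = N - \phi(-i-j) + 1$ (when $N - \phi(-i-j) < N - 1$). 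In either subcase the rows with vanishing subdiagonal part are exactly the top rows of each block: in the first subcase these are $k = 0, N, 2N, \dots, (d-1)N$, yielding $\soc(V) = \bigoplus_{k=0}^{d-1} V_{[j-kN]}^{[ti]}$; in the second subcase the block sizes are $p$ and $N - p$, so within the $r$-th copy of $\mathcal{A}_j^{ti}(N-1)$ the two "top rows" occur at global indices $rN$ and $rN + p$, giving $\soc(V) = \bigoplus_{k=0}^{d-1}\bigl(V_{[j-kN]}^{[ti]} \oplus V_{[(j-p)-kN]}^{[t(i-p)]}\bigr)$.

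The remaining bookkeeping is to confirm the \emph{types}. For $v_{kN}$, the $(kN,kN)$-entry of the full comatrix is $c_j^{ti}(kN, kN) = g^{i t - kNt} = g^{ti}$ since $g^n = 1$ and $N \mid n$; combined with $g\cdot v_{kN} = \xi^{j-kN} v_{kN}$ this shows $v_{kN} \in V_{[j-kN]}^{[ti]}$. For $v_{kN+p}$, it sits at the top of the second block of the $k$-th copy; by the shift structure of Lemma~\ref{lem:comatrixdecomposition} (with the starting indices of that block being $j - p$ and $i - p$ modulo the appropriate periods), its diagonal comatrix entry is $g^{t(i-p)}$, and $g\cdot v_{kN+p} = \xi^{(j-p)-kN} v_{kN+p}$, so $v_{kN+p} \in V_{[(j-p)-kN]}^{[t(i-p)]}$. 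I expect the main obstacle to be this last step: keeping the index shifts from the iterated block decomposition consistent — in particular verifying that the second block in each period really starts with parameters $(t(i-p), j-p)$ and not some other representative, and checking that $p = N - \phi(-i-j)+1$ is indeed the size of the first block so that the offsets line up. This is a matter of carefully tracking the recursion in Proposition~\ref{lem:Atijblocks}(2) and Lemma~\ref{lem:comatrixdecomposition}, but it is purely combinatorial with no analytic difficulty.
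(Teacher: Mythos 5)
Your proposal is correct and follows essentially the same route as the paper: reduce to identifying which $v_k$ span subcomodules (using that every weight space of $V(ti,j,\lambda)$ is at most one-dimensional) and read this off from the block structure of $\mathcal{A}_j^{ti}(m)$ via Remark~\ref{rmk:Atijblock}(2) and Proposition~\ref{lem:Atijblocks}. The paper only writes out the $\lambda=0$ case and declares the others analogous, whereas you supply the bookkeeping for all three; the one tiny imprecision is that $g^{kNt}=1$ follows from $N=o(g^t)$ (i.e.\ $n\mid Nt$) rather than merely from $N\mid n$, but this does not affect the argument.
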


\begin{proof}
We prove only the first case, as the proofs of the other two are analogous. As previously analyzed, it suffices to identify those elements $v_k$ for which $\operatorname{span}\{v_k\}$ is a subcomodule. Since $\mathcal{A}_j^{ti}(m) = \overline{\mathcal{A}_j^{ti}}$, Remark~\ref{rmk:Atijblock}(2) implies that only $v_0$ satisfies this condition. Hence, $\soc(V) = V_{[j]}^{[ti]}$.
\end{proof}

We thus arrive at a complete description of the isomorphisms among these simple modules.

\begin{proposition}\label{prop:isocondition}
For any $ i,j,r,s\in \Z $ and $ 0\neq \lambda \in \k $, let $p=N-\phi(-i-j)+1 $. 
\begin{itemize}
	\item [(1)]  $V(ti,j,0) \cong  V(tr,s,0) $ if and only if   $ i \equiv r  \pmod N  $ and  $ j \equiv s  \pmod n  $.
	\item [(2)] If $ p=N $, then $V(ti,j,\lambda) \cong V(tr,s,\lambda) $ if and only if   $ i \equiv r  \pmod N  $ and  $ j \equiv s  \pmod N  $.
	\item [(3)] If $ p<N $, then $V(ti,j,\lambda) \cong V(tr,s,\lambda) $ if and only if  either of the following holds:
	\begin{itemize}
		\item[(a)]  $ i \equiv r \pmod N   $ and  $ j \equiv s  \pmod N  $; or
		\item[(b)]  $ i \equiv r-p  \pmod N  $ and  $ j \equiv s-p  \pmod N  $.
	\end{itemize}
\end{itemize}
\end{proposition}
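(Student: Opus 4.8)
The plan is to reduce everything to Lemma~\ref{lem:isomorphism}, which already tells us that $V(ti,j,\lambda)\cong V(tr,s,\mu)$ if and only if $\lambda=\mu$ and the two modules have standard elements of the same type. Since here $\lambda=\mu$ is assumed throughout (either both $0$ or both the same nonzero scalar), the entire content of the proposition is the translation of the phrase "have standard elements of the same type" into explicit congruence conditions on $(i,j)$ and $(r,s)$. For this translation I would invoke Lemma~\ref{lem:socVtij}, which computes $\soc(V(ti,j,\lambda))$ as an explicit direct sum of one-dimensional spaces $V^{[ti]}_{[j-kN]}$ (and, in the $p<N$ case, also $V^{[t(i-p)]}_{[(j-p)-kN]}$). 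A standard element of $V(ti,j,\lambda)$ of type $(q,r)$ exists precisely when one of these summands is nonzero and lies in $V^{[q]}_{[r]}$, i.e. when $(q,r)$ is congruent mod $n$ to one of the pairs $(ti,\,j-kN)$ or $(t(i-p),\,(j-p)-kN)$ for $0\le k\le d-1$.

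First I would handle part (1), the case $\lambda=0$: here $\soc(V(ti,j,0))=V^{[ti]}_{[j]}$ is a single one-dimensional space, so $V(ti,j,0)$ and $V(tr,s,0)$ have standard elements of the same type iff $[ti]=[tr]$ and $[j]=[s]$ in $\Z/n\Z$. The first of these is equivalent to $ti\equiv tr\pmod n$, which by the standing convention (relating congruence mod $N$ of $i,j$ to congruence mod $n$ of $ti,tj$) is the same as $i\equiv r\pmod N$; the second is $j\equiv s\pmod n$. That gives exactly condition (1). Next, for parts (2) and (3) I would compare the two sets of "types" arising from Lemma~\ref{lem:socVtij}. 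In the $p=N$ case (part (2)), $\soc(V(ti,j,\lambda))=\bigoplus_{k=0}^{d-1}V^{[ti]}_{[j-kN]}$; all these summands share the same comodule type $[ti]$, and their module types are $[j],[j-N],\dots,[j-(d-1)N]$, which is precisely the full coset $j+N\Z$ read mod $n$ (since $dN=n$). So the type set of $V(ti,j,\lambda)$ is $\{\,([ti],[j-kN]):0\le k<d\,\}$, and two such sets coincide iff $[ti]=[tr]$ and $j,s$ lie in the same coset mod $N$, i.e. $i\equiv r\pmod N$ and $j\equiv s\pmod N$. For part (3), $p<N$, the type set additionally contains the pairs $([t(i-p)],[(j-p)-kN])$; equality of the full (two-coset) type sets then forces either the "aligned" matching (a) or the "swapped" matching (b), and one checks that both (a) and (b) do yield equal type sets, using that $ti-tp\equiv t(i-p)$ and the congruence convention again.

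The main obstacle, and the only place requiring genuine care, is the "only if" direction of part (3): one must argue that when $p<N$ the two summands in $\soc$ are genuinely distinguishable, so that matching the type sets cannot mix a summand of the first kind in $V$ with one of the second kind in $W$ in some unexpected partial way — and that the only possibilities are the global matchings (a) and (b). Here I would use that all the first-kind summands of $V(ti,j,\lambda)$ carry comodule degree $[ti]$ while all second-kind summands carry comodule degree $[t(i-p)]$, and that $[ti]=[t(i-p)]$ would force $p\equiv 0\pmod N$, contradicting $0<p<N$ (note $p=N-\phi(-i-j)+1$ ranges over $\{2,\dots,N\}$, and $p=N$ is the excluded boundary case). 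Thus within each module the two "halves" of the socle are separated by their comodule degree, so matching the type sets of $V$ and $W$ must send first-kind-half to one half of $W$'s socle and second-kind-half to the other; comparing module degrees within each half (each being a full coset mod $N$ inside $\Z/n\Z$) then yields exactly the two congruence alternatives. The remaining verifications — that (a) and (b) are each sufficient, and the bookkeeping $ti-tp\equiv t(i-p)\pmod n$, $j-p-kN$ ranging over the coset, etc. — are routine once this separation is in place.
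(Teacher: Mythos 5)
Your strategy is exactly the paper's: the printed proof is the one\textendash line ``this follows from Lemmas \ref{lem:isomorphism} and \ref{lem:socVtij}'', and your proposal simply spells out that reduction. Parts (1) and (2) of your argument are fine, as is the observation in part (3) that the two halves of the socle are separated by their comodule degree (indeed $[ti]=[t(i-p)]$ would force $p\equiv 0\pmod N$, impossible for $1\le p<N$; note only that $p=N-\phi(-i-j)+1$ ranges over $\{1,\dots,N\}$, not $\{2,\dots,N\}$).

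However, the one step you wave through --- ``one checks that both (a) and (b) do yield equal type sets'' --- is precisely where the content lies, and carrying it out shows that the swapped matching does \emph{not} produce condition (b) as stated. By Lemma \ref{lem:socVtij} the second half of $\soc(V(ti,j,\lambda))$ consists of standard elements of type $\bigl(t(i-p),\,(j-p)-kN\bigr)$, while $V(tr,s,\lambda)$ always has the standard element $v_0$ of type $(tr,s)$; matching these gives $r\equiv i-p$ and $s\equiv j-p\pmod N$, i.e.\ $i\equiv r+p$ and $j\equiv s+p\pmod N$ --- the opposite sign from (b). (The other mixed matching, first half of $V$ against second half of $W$, forces the parameter of $(r,s)$ to equal $N-p$ and reduces to the same congruence.) Concretely, take $n=N=5$, $t=1$, $(i,j)=(0,0)$, so $p=\phi(1)=1$: then $c_0^0(1,0)=0$, so $v_1$ is a standard element of type $(4,4)$ and $V(0,0,\lambda)\cong V(4,4,\lambda)$; but the socle types of $V(1,1,\lambda)$ are $\{(1,1),(3,3)\}$, disjoint from $\{(0,0),(4,4)\}$, so $V(0,0,\lambda)\not\cong V(1,1,\lambda)$, contradicting (b) as printed. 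Thus either the statement of (3)(b) needs its sign corrected to $i\equiv r+p$ and $j\equiv s+p\pmod N$ (equivalently $r\equiv i-p$, $s\equiv j-p$), or your claimed verification of the sufficiency of (b) is false; as written your proof cannot establish the proposition in its printed form, since (b) with the printed sign is neither necessary nor sufficient unless $2p\equiv 0\pmod N$.
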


\begin{proof}
This follows immediately from Lemmas~\ref{lem:isomorphism} and \ref{lem:socVtij}.
\end{proof}

\subsection{The Infinite-Dimensional Case}\label{subsection4.2}

In this subsection, we address the classification of infinite-dimensional simple modules. We first note that by Corollary \ref{cor:coversectijequal0}, $ H $  can possess such modules only when  $ t $ and $ n $
are not coprime. We therefore make the standing assumption throughout this subsection that $ \gcd (t,n)>1 $, and proceed to prove that $ H $ indeed admits infinite-dimensional simple modules under this condition.

Define the following subeset of $ \Z $:
\begin{equation*}
	\mathcal{J}=\left\{i\in \Z \mid  i \not\equiv tk \pmod n \  \text{for any} \ k \in \Z\right\}.
\end{equation*}
Note that $\mathcal{J} \neq \emptyset$ since $t$ and $n$ are not coprime. For any $i \in \mathcal{J}$ and $j \in \mathbb{Z}$, we define an infinite-dimensional simple Yetter-Drinfeld module $V(i,j)$.

The module $V(i,j)$ is defined as follows:

\begin{itemize}
	\item A $ \k $-basis of  $ V(i,j) $ is given by
	\begin{equation*}
	  \left\{v_k \mid k \geq 0\right\};
	\end{equation*}
	\item The action of $g$ is defined for $k\geq 0$ by
	\begin{equation}\label{basisofVinfinite}
		g\cdot v_k=\xi^{j-k} v_k;
	\end{equation}
	\item The action of $x$ is defined for $k\geq 0$ by
	\begin{equation*}
		x\cdot v_k=v_{k+1};
	\end{equation*}
	\item The coaction is defined for $k\geq 0$ by
	\begin{equation*}
		\delta(v_k) =\sum\limits_{l=0}^{k} c_j^{i}(k,l) \otimes v_l.
	\end{equation*}
\end{itemize}

Similarly, we establish the following results.

\begin{lemma}\label{lem:yanzhengydmoinfinit}
For any $i \in \mathcal{J}$ and $j \in \mathbb{Z}$, the module $ V(i,j) $  is a simple Yetter-Drinfeld module over $ H $. 	
\end{lemma}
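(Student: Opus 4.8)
The plan is to follow the same two-part strategy used for the finite-dimensional modules in Lemma \ref{lem:yanzhengYDmo}: first verify that $V(i,j)$ is a Yetter-Drinfeld module, then verify simplicity. For the Yetter-Drinfeld structure, I would first observe that $V(i,j)$ is a left $H$-comodule: since the coaction is given by the (now countably infinite) lower-triangular comatrix built from the $c_j^i(k,l)$, and every truncation $\A_j^i(m)$ is a comatrix by Corollary \ref{cor:Aijisacomatrix}, the coassociativity and counit axioms hold degree by degree, so Lemma \ref{lem:comoduleequicodition} (applied on each finite-dimensional initial segment) gives the comodule structure. Next, $V(i,j)$ is clearly a $\k G$-module with $\span\{v_k\} = V_{[j-k]}$, and since $x\cdot V_{[j-k]} \subseteq V_{[j-k-1]}$, Proposition \ref{prop:structureofHmodule} shows $V(i,j)$ is a left $H$-module. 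By Lemma \ref{lem:compatiable} it then suffices to check the compatibility condition \eqref{compatibilitycondition} for $h \in \{g, x\}$ on each basis vector $v_k$. The computation for $h=g$ is identical to that in Lemma \ref{lem:yanzhengYDmo}, and the computation for $h=x$ reduces — via the definition of $c_j^i(k,l)$ in Definition \ref{def:ckl} — to the telescoping identity $\rho(x\cdot v_k) = \sum_{l=0}^{k+1} c_j^i(k+1,l)\otimes v_l = \delta(v_{k+1})$; crucially, in the infinite-dimensional case there is no ``top'' basis vector, so no analogue of the separate case $k=m$ arises and the verification is actually cleaner.

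For simplicity, I would take a nonzero Yetter-Drinfeld submodule $W \subseteq V(i,j)$ and show $W = V(i,j)$. Since each $V_{[j-k]}$ is one-dimensional, Lemma \ref{lem:kGmoduleproperty} lets me assume $v_k \in W$ for some $k \geq 0$. Then $\delta(v_k) = \sum_{l=0}^{k} c_j^i(k,l)\otimes v_l \in H \otimes W$. The key point is that the coefficients $c_j^i(k,l) = \lambda_j^i(k,l)\,x^{k-l}g^{i-kt}$ are linearly independent over $\k$ for $0 \leq l \leq k$: here the hypothesis $i \in \mathcal{J}$ enters decisively, because it guarantees $R_j^i(s,0) = \xi^{tj} - \xi^{t(s-1)-i} \neq 0$ for all $s \geq 1$ (as $i \not\equiv t(\text{anything}) \pmod n$), so by Proposition \ref{prop:explictlambda} every $\lambda_j^i(k,l) \neq 0$. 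Hence applying a suitable projection to the coaction yields $v_0 \in W$, and then $v_l = x^l \cdot v_0 \in W$ for all $l \geq 0$, so $W = V(i,j)$.

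The main obstacle, and the one point deserving genuine care, is precisely this non-vanishing of $\lambda_j^i(k,l)$ and the resulting linear independence of the $c_j^i(k,l)$. One must check that $i \in \mathcal{J}$ forces $\xi^{t(s-1)-i} \neq \xi^{tj}$ for every $s \geq 1$ and every $j$; equivalently $t(s-1) - i \not\equiv tj \pmod n$, i.e. $i \not\equiv t(s-1-j) \pmod n$, which is exactly the defining property of $\mathcal{J}$. This is in sharp contrast to the modules $V(ti,j,0)$, where $\lambda_j^{ti}(k,l)$ eventually vanishes and the comatrix decomposes as in Lemma \ref{lem:comatrixdecomposition} — it is the \emph{failure} of such a decomposition that makes $V(i,j)$ genuinely infinite-dimensional and simple. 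Everything else (the module and comodule axioms, the compatibility condition, the reduction via Lemma \ref{lem:kGmoduleproperty}) is routine and parallels the finite-dimensional argument, so I would state those steps briefly and concentrate the exposition on the role of the hypothesis $i \in \mathcal{J}$.
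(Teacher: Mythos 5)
There is a genuine gap in your simplicity argument. You assert that ``each $V_{[j-k]}$ is one-dimensional'' and invoke Lemma \ref{lem:kGmoduleproperty} to reduce to the case $v_k \in W$ for a single $k$. This is false for the infinite-dimensional module $V(i,j)$: since $g\cdot v_k = \xi^{j-k}v_k$, the eigenspace $V_{[j-k]}$ is spanned by $\{v_{k+mn} \mid m\geq 0\}$ and is infinite-dimensional. Lemma \ref{lem:kGmoduleproperty} therefore only lets you assume that some finite combination $u=\sum_{m=0}^{M}a_m v_{k+mn}$ lies in $W$, not a single basis vector, and your subsequent linear-independence argument for the coefficients $c_j^i(k,l)$ of $\delta(v_k)$ is applied to an element you have not legitimately obtained. (The analogous reduction is valid for $V(ti,j,\lambda)$ in Lemma \ref{lem:yanzhengYDmo} precisely because there the basis indices $0\leq k\leq n-1$ hit each residue class mod $n$ exactly once.)

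The paper avoids this by a different reduction: it takes a \emph{simple subcomodule} $U$ of $W$, which is one-dimensional because $H$ is pointed, writes its generator as $v=\sum_{k=0}^{p}a_kv_k$ with $a_p\neq 0$, and compares the coefficient of $v_0$ in $\delta(v)=g^r\otimes v$ against the explicit formula $\sum_{k=0}^{p}a_k\lambda_j^i(k,0)x^kg^{i-kt}$; since $i\in\mathcal{J}$ forces $\lambda_j^i(p,0)\neq 0$, the right-hand side has $x$-degree $p$, so $p=0$ and $v_0\in W$. Your key observation --- that $i\in\mathcal{J}$ guarantees $R_j^i(s,0)\neq 0$ for all $s\geq 1$ and hence all $\lambda_j^i(k,l)\neq 0$ --- is correct and is indeed the heart of the matter, and your gap is repairable in the same spirit: from $u=\sum_m a_mv_{k+mn}\in W$ one can apply to $\delta(u)\in H\otimes W$ the functional dual to the basis element $x^{k+Mn}g^{i-(k+Mn)t}$ of $H$, which occurs only in the coefficient of $v_0$ (all other coefficients have strictly smaller $x$-degree), yielding $v_0\in W$. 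But as written the proof does not close this step. The verification of the Yetter--Drinfeld structure itself matches the paper, which simply refers back to Lemma \ref{lem:yanzhengYDmo}.
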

\begin{proof}
Let $ \left\{v_k \mid k \geq 0\right\}$ be the basis of $V(i,j)$ defined in \eqref{basisofVinfinite}, and denote $V(i,j)$ by $V$.  Similar to the proof of Lemma~\ref{lem:yanzhengYDmo}, one can verify that $V$ is a Yetter-Drinfeld module. We therefore focus on proving that $V$ is simple.

Let $W$ be a non-zero Yetter-Drinfeld submodule of $V$; we show that $W = V$. Let $U$ be a simple subcomodule of $W$. Since $H$ is pointed, $U = \operatorname{span}\{v\}$ for some non-zero $v \in W$. Write
\[
v = \sum_{k=0}^{p} a_k v_k \quad \text{with } a_k \in \k,
\]
and assume $a_p \neq 0$. As $U$ is a subcomodule, there exists $r \in \mathbb{Z}$ such that
\[
\delta(v) = g^r \otimes v = \sum_{l=0}^{p} a_l g^r \otimes v_l.
\]	
On the other hand, by the definition of the coaction,
\[
\delta(v) = \sum_{l=0}^{p} \left( \sum_{k=l}^{p} a_k c_j^i(k,l) \right) \otimes v_l.
\]
Comparing the coefficients of $v_0$, we obtain
\[
a_0 g^r = \sum_{k=0}^{p} a_k c_j^i(k,0) = \sum_{k=0}^{p} a_k \lambda_j^i(k,0) x^k g^{i - kt}.
\]
Since $a_p \neq 0$ and $\lambda_j^i(p,0) \neq 0$ (because $i \in \mathcal{J}$), the right-hand side is a polynomial in $x$ of degree $p$. For the equality to hold, we must have $p = 0$. Consequently, $v = a_0 v_0 \in W$, and thus $v_0 \in W$. It follows that $W = V$, which completes the proof.
\end{proof}

\begin{corollary}\label{cor:hasinfinitemodule}
All simple Yetter-Drinfeld modules over $H$ are finite-dimensional if and only if $t$ and $n$ are coprime.
\end{corollary}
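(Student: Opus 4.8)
The statement to prove is Corollary~\ref{cor:hasinfinitemodule}: all simple Yetter-Drinfeld modules over $H$ are finite-dimensional if and only if $t$ and $n$ are coprime.

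The plan is to prove this as a direct combination of the results already established in Sections~\ref{section3} and~\ref{section4}, splitting into the two implications. For the forward direction, I would argue by contraposition: suppose $t$ and $n$ are \emph{not} coprime, i.e. $\gcd(t,n) > 1$. Then the set $\mathcal{J} = \{i \in \Z \mid i \not\equiv tk \pmod n \text{ for any } k \in \Z\}$ is nonempty, since the subgroup of $\Z/n\Z$ generated by $[t]$ is proper. Picking any $i \in \mathcal{J}$ and any $j \in \Z$, Lemma~\ref{lem:yanzhengydmoinfinit} produces the module $V(i,j)$, which is a simple Yetter-Drinfeld module over $H$ with $\k$-basis $\{v_k \mid k \geq 0\}$; this is infinite-dimensional by construction. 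Hence $H$ possesses an infinite-dimensional simple Yetter-Drinfeld module, establishing the contrapositive.

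For the converse direction, suppose $t$ and $n$ are coprime, and let $V$ be any simple Yetter-Drinfeld module over $H$; I would show $V$ is finite-dimensional. By Lemma~\ref{lem:socV}, $V$ contains a standard element $v$, and by Remark~\ref{rmk:onlyti} (i.e. Proposition~\ref{pro:ctijequal0}(2)), we may take $v \in V_{[j]}^{[ti]}$ for some $i,j \in \Z$. Wait — more carefully, Proposition~\ref{pro:ctijequal0}(2) applies to finite-dimensional simple modules; instead I should note that $v \in V_{[j]}^{[r]}$ for some $r,j \in \Z$, and since $t$ and $n$ are coprime, multiplication by $t$ is a bijection on $\Z/n\Z$, so $r \equiv ti \pmod n$ for some $i \in \Z$. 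Thus $V_{[j]}^{[ti]} \neq 0$, and Corollary~\ref{cor:coversectijequal0} immediately gives that $V$ is finite-dimensional. This completes the proof.

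The main (and only real) obstacle is making sure the quantifier structure and the dependency on coprimality are handled cleanly at the point where one invokes Corollary~\ref{cor:coversectijequal0}: that corollary requires the standard element to have type of the form $(ti, j)$, and it is precisely the coprimality hypothesis that guarantees \emph{every} element of $\Z/n\Z$ has this form, so that an arbitrary standard element automatically satisfies the hypothesis. Everything else is a routine assembly of prior results, so no lengthy computation is needed.

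\begin{proof}
($\Leftarrow$) Assume $t$ and $n$ are coprime, and let $V$ be a simple Yetter-Drinfeld module over $H$. By Lemma~\ref{lem:socV}, $V$ admits a standard element, say $v \in V_{[j]}^{[r]}$ for some $r, j \in \Z$. Since $\gcd(t,n) = 1$, multiplication by $t$ is a bijection on $\Z/n\Z$, so there exists $i \in \Z$ with $r \equiv ti \pmod{n}$; hence $V_{[j]}^{[ti]} = V_{[j]}^{[r]} \neq 0$. By Corollary~\ref{cor:coversectijequal0}, $V$ is finite-dimensional.

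($\Rightarrow$) We prove the contrapositive. Assume $t$ and $n$ are not coprime. Then the subgroup $\langle [t] \rangle$ of $\Z/n\Z$ generated by $[t]$ is proper, so $\mathcal{J} \neq \emptyset$. Choose any $i \in \mathcal{J}$ and any $j \in \Z$. By Lemma~\ref{lem:yanzhengydmoinfinit}, $V(i,j)$ is a simple Yetter-Drinfeld module over $H$, and by construction it has the infinite $\k$-basis $\{v_k \mid k \geq 0\}$. Thus $H$ has an infinite-dimensional simple Yetter-Drinfeld module, so not all simple Yetter-Drinfeld modules over $H$ are finite-dimensional.
\end{proof}
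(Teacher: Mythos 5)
Your proof is correct and follows essentially the same route as the paper, which likewise deduces the corollary directly from Corollary~\ref{cor:coversectijequal0} (for the coprime direction) and Lemma~\ref{lem:yanzhengydmoinfinit} (for the non-coprime direction). Your explicit handling of the quantifier point --- that coprimality makes every standard element automatically of type $(ti,j)$ --- is exactly the detail the paper leaves implicit.
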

\begin{proof}
This follows immediately from Corollary \ref{cor:coversectijequal0} and Lemma~\ref{lem:yanzhengydmoinfinit}
\end{proof}

\begin{proposition}\label{prop:infiniteisotoV}
Every infinite-dimensional simple Yetter-Drinfeld module $V$ over $H$ is isomorphic to $V(i,j)$ for some  $i \in \mathcal{J}$ and $j \in \mathbb{Z}$.
\end{proposition}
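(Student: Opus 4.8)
The plan is to mirror the structure of the proof of Proposition~\ref{prop:finiteisotoV}, but now in the infinite-dimensional setting, using the standing assumption $\gcd(t,n)>1$. So let $V$ be an infinite-dimensional simple Yetter-Drinfeld module over $H$. By Lemma~\ref{lem:socV}, $V$ contains a standard element $v\in V_{[j]}^{[r]}$ for some $r,j\in\Z$. The first key observation is that $r$ cannot be of the form $ti_1\pmod n$: if it were, then $V_{[j]}^{[ti_1]}\neq 0$ and Corollary~\ref{cor:coversectijequal0} would force $V$ to be finite-dimensional, a contradiction. Hence $r\in\mathcal{J}$; write $i:=r$, so $i\in\mathcal{J}$ and $v\in V_{[j]}^{[i]}$.

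Next I would invoke Lemma~\ref{thm:standbasisinfinite}: since $v$ is a standard element of the infinite-dimensional simple module $V$, the set $\{x^k\cdot v\mid k\geq 0\}$ is a standard basis of $V$. Now compare with the explicitly constructed module $V(i,j)$ from Lemma~\ref{lem:yanzhengydmoinfinit}, with its basis $\{v_k\mid k\geq 0\}$ satisfying \eqref{basisofVinfinite}. Define the linear map $f:V(i,j)\to V$ by $f(v_k)=x^k\cdot v$ for all $k\geq 0$; this is a bijection since both are bases. It remains to check that $f$ is a morphism in ${}^H_H\mathcal{YD}$, i.e.\ it intertwines the $H$-action and the $H$-coaction. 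For the action of $g$, both modules satisfy $g\cdot(\text{$k$-th basis vector})=\xi^{j-k}(\text{$k$-th basis vector})$ by \eqref{basisofVinfinite} and Proposition~\ref{prop:structureofHmodule}; for the action of $x$, $f(x\cdot v_k)=f(v_{k+1})=x^{k+1}\cdot v=x\cdot f(v_k)$ for all $k$. For the coaction, in $V(i,j)$ we have $\delta(v_k)=\sum_{l=0}^k c_j^i(k,l)\otimes v_l$ by definition, while in $V$ formula \eqref{deltax^kv} gives $\delta(x^k\cdot v)=\sum_{l=0}^k c_j^i(k,l)\otimes x^l\cdot v$ (here we use $v\in V_{[j]}^{[i]}$); applying $\mathrm{id}\otimes f$ to the former yields the latter, so $f$ is comodule-linear. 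Hence $f$ is an isomorphism of Yetter-Drinfeld modules and $V\cong V(i,j)$.

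The only genuine subtlety — and the step I would be most careful about — is the very first one: ruling out that the standard element $v$ lives in some $V_{[j]}^{[ti_1]}$. This is where the dichotomy between finite- and infinite-dimensional simple modules is forced, and it relies essentially on Corollary~\ref{cor:coversectijequal0} (which in turn rests on the block-periodicity analysis of the comatrices $\mathcal{A}_j^{ti}(p)$ in Proposition~\ref{lem:Atijblocks}). Everything after that is a routine verification that the tautological map sending the abstract standard basis to the concrete one respects all the structure, exactly as in the finite-dimensional case; no new computation is needed because the coefficients $c_j^i(k,l)$ are defined by the same recursion \eqref{coefficientckl} in both settings, and \eqref{deltax^kv} packages the comodule structure uniformly.
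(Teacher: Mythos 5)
Your proof is correct and follows essentially the same route as the paper's: both obtain a standard element, use Corollary~\ref{cor:coversectijequal0} to force its comodule type into $\mathcal{J}$, invoke Lemma~\ref{thm:standbasisinfinite} for the standard basis, and then map the abstract basis of $V(i,j)$ onto $\{x^k\cdot v\}$. You merely spell out the intertwining checks that the paper leaves as ``straightforward to verify,'' and your verification (via \eqref{deltax^kv} for the coaction) is accurate.
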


\begin{proof}
Let $v$ be a standard element of $V$; by Corollary \ref{cor:coversectijequal0}, $v \in V_{[j]}^{[i]}$ for some $i \in \mathcal{J}$ and $j \in \mathbb{Z}$. Lemma~\ref{thm:standbasisinfinite} implies that $\{x^k \cdot v \mid  k\geq 0   \}$ is a basis of $V$.	

Let $\left\{v_k \mid k \geq 0\right\}$ be the basis of $V(i, j)$ defined in~\eqref{basisofVinfinite}. Define a linear map $f: V(i, j) \to V$ by $f(v_k) = x^k \cdot v$ for all $ k\geq 0$. It is straightforward to verify that $f$ is an isomorphism of Yetter-Drinfeld modules, so $V \cong V(i, j)$.
\end{proof}

\begin{proposition}\label{prop:isoconditioninfinite}
For any $ i,r \in \mathcal{I} $ and $ j,s \in \Z $, $V(i,j) \cong  V(r,s) $ if and only if   $ i \equiv r  \pmod n  $ and  $ j \equiv s  \pmod n  $.
\end{proposition}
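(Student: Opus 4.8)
The plan is to mimic the structure of Lemma~\ref{lem:isomorphism}, splitting into the two implications. For the forward direction, suppose $f : V(i,j) \to V(r,s)$ is an isomorphism of Yetter-Drinfeld modules. Since $f$ is both a module and comodule morphism, it preserves the isotypic decompositions: $f\big(V(i,j)_{[p]}^{[q]}\big) \subseteq V(r,s)_{[p]}^{[q]}$ for all $p,q \in \mathbb{Z}$. So it suffices to show that $V(i,j)$ and $V(r,s)$ have standard elements of the same type precisely when $i \equiv r \pmod n$ and $j \equiv s \pmod n$. To this end I would compute $\soc\big(V(i,j)\big)$ explicitly, as was done for the finite-dimensional modules in Lemma~\ref{lem:socVtij}. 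Using the basis $\{v_k \mid k \geq 0\}$ from~\eqref{basisofVinfinite}, we have $V(i,j) = \bigoplus_{k \geq 0} \operatorname{span}\{v_k\} = \bigoplus_{k\geq 0} V(i,j)_{[j-k]}$, so each $V(i,j)_{[m]}$ is at most one-dimensional and we only need to decide for which $v_k$ the line $\operatorname{span}\{v_k\}$ is a subcomodule. By Proposition~\ref{pro:cijformula}, $\delta(v_k) = \sum_{l=0}^k \lambda_j^i(k,l)\, x^{k-l}g^{i-kt} \otimes v_l$, and since $i \in \mathcal{J}$, Proposition~\ref{prop:explictlambda} together with the defining property of $\mathcal{J}$ (namely $R_j^i(l,0) = \xi^{tj} - \xi^{t(l-1)-i} \neq 0$ for every $l \geq 1$, as $i \not\equiv t(l-1-j) \pmod n$) shows that $\lambda_j^i(k,l) \neq 0$ for all $0 \leq l \leq k$. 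Hence $\operatorname{span}\{v_k\}$ is a subcomodule if and only if $k = 0$, so $\soc\big(V(i,j)\big) = V(i,j)_{[j]}^{[i]}$, and in particular the only type of a standard element of $V(i,j)$ is $(i,j)$ modulo $n$ in both coordinates.

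Granting this computation, the forward implication is immediate: if $V(i,j) \cong V(r,s)$ then their socles are isomorphic as Yetter-Drinfeld modules, forcing $V(i,j)_{[j]}^{[i]}$ and $V(r,s)_{[s]}^{[r]}$ to coincide as types, i.e. $i \equiv r \pmod n$ and $j \equiv s \pmod n$ (recall $V(r,s)_{[p]}^{[q]} \neq 0$ only for $[p] = [s]$, $[q] = [r]$). For the converse, if $i \equiv r \pmod n$ and $j \equiv s \pmod n$, then the defining data of $V(i,j)$ and $V(r,s)$ literally agree: the $g$-action in~\eqref{basisofVinfinite} depends only on $[j]$, the $x$-action is the shift regardless of parameters, and the coaction coefficients satisfy $c_j^i(k,l) = c_s^r(k,l)$ because, as recorded in the Remark following Definition~\ref{def:ckl}(1), $\mathcal{A}_{j}^{i}(p) = \mathcal{A}_{s}^{r}(p)$ whenever $i \equiv r \pmod n$ (hence mod $N$) and $j \equiv s \pmod n$ (hence mod $N$). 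Therefore $v_k \mapsto v_k$ is an isomorphism $V(i,j) \xrightarrow{\sim} V(r,s)$. Alternatively, invoking Proposition~\ref{prop:infiniteisotoV}: if the types agree, then $V(i,j)$ has a standard element in $V(i,j)_{[j]}^{[i]}$ which, by the argument in the proof of that proposition, exhibits $V(i,j) \cong V(r,s)$ directly.

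I do not expect a serious obstacle here; the result is essentially bookkeeping once the socle computation is in place. The one point requiring a little care is verifying that $\lambda_j^i(k,l) \neq 0$ for all $0 \leq l \leq k$ when $i \in \mathcal{J}$ — this is where the hypothesis $i \in \mathcal{J}$ is actually used, and it should be spelled out via the product formula~\eqref{explictlambda} rather than taken for granted (it is the analogue of the nonvanishing statement in Remark~\ref{rmk:Atijblock}(2), but now for a genuinely infinite family of rows, which is exactly why $V(i,j)$ is infinite-dimensional). A secondary subtlety is the modulus: unlike the finite-dimensional case of Proposition~\ref{prop:isocondition}, where the congruences are only mod $N$ and there can be an extra ``shifted'' isomorphism class, here the rigidity of the infinite module forces congruences mod the full $n$ and there is no shift phenomenon — worth a sentence of commentary to contrast with Proposition~\ref{prop:isocondition}, but nothing that complicates the proof.
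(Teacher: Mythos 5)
Your overall strategy---reduce both directions to identifying the simple subcomodules of $V(i,j)$, with the converse handled by checking that the defining data agree---is essentially the paper's, and your converse direction is fine. But the forward direction as written contains a genuine error. You assert that $V(i,j)=\bigoplus_{k\geq 0}\operatorname{span}\{v_k\}=\bigoplus_{k\geq 0}V(i,j)_{[j-k]}$ and hence that each eigenspace $V(i,j)_{[m]}$ is at most one-dimensional, so that it suffices to decide which lines $\operatorname{span}\{v_k\}$ are subcomodules. That is the finite-dimensional reasoning from Lemma~\ref{lem:socVtij}, and it fails here: $k$ ranges over all of $\mathbb{N}_0$ while there are only $n$ congruence classes, so the classes $[j-k]$ repeat and in fact $V(i,j)_{[j-m]}=\operatorname{span}\{v_k\mid k\equiv m\ (\mathrm{mod}\ n)\}$ is infinite-dimensional. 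A simple subcomodule could therefore a priori be spanned by a nontrivial finite combination $\sum_k a_k v_k$, and your computation does not exclude this.

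The gap is repairable, and the repair is precisely the paper's argument, already carried out in the proof of Lemma~\ref{lem:yanzhengydmoinfinit} (which the paper's proof of this proposition simply cites): take $u=\sum_{k=0}^{p}a_k v_k$ with $a_p\neq 0$ and $\delta(u)=g^r\otimes u$, compare the coefficient of $v_0$ to obtain $a_0 g^r=\sum_{k=0}^{p}a_k\lambda_j^{i}(k,0)x^{k}g^{i-kt}$, and use $\lambda_j^{i}(p,0)\neq 0$ --- the non-vanishing you correctly isolate as the place where $i\in\mathcal{J}$ enters --- to force $p=0$. Hence $\operatorname{span}\{v_0\}$ is the \emph{unique} simple subcomodule of $V(i,j)$, and the forward implication then runs as you intended: $f(v_0)\in V(r,s)_{[j]}^{[i]}$ spans a simple subcomodule of $V(r,s)$, which must be $\operatorname{span}\{w_0\}\subseteq V(r,s)_{[s]}^{[r]}$, giving $i\equiv r$ and $j\equiv s\pmod n$. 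Your closing remark contrasting the mod-$n$ rigidity here with the mod-$N$ shift phenomenon of Proposition~\ref{prop:isocondition} is accurate and worth keeping.
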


\begin{proof}
	Let $V = V(i,j)$ and $W = V(r,s)$ with bases $\{v_k \mid k \geq 0\}$ and $\{w_k \mid k \geq 0\}$ defined in \eqref{basisofVinfinite}, respectively.
	
	If $i \equiv r \pmod{n}$ and $j \equiv s \pmod{n}$, then clearly $V \cong W$.
	
	Conversely, suppose $V \cong W$ and let $f: V \to W$ be an isomorphism of Yetter-Drinfeld modules. Since $v_0 \in V_{[j]}^{[i]}$, its image $f(v_0) \in W_{[j]}^{[i]}$. Hence, $\operatorname{span}\{f(v_0)\}$ is a simple subcomodule of $W$. By the proof of Lemma~\ref{lem:yanzhengydmoinfinit}, the only simple subcomodule of $W$ is $\operatorname{span}\{w_0\}$. Therefore, $f(v_0) = \lambda w_0$ for some nonzero $\lambda \in \Bbbk$, which implies $f(v_0) \in W_{[r]}^{[s]}$. Consequently, $i \equiv r \pmod{n}$ and $j \equiv s \pmod{n}$, completing the proof.
\end{proof}

We conclude this section with the following theorem. 

\begin{theorem}\label{thm:YDmodule}
Let $ H=H(n,t,\xi) $. Then 
\begin{itemize}
	\item [(1)] All finite-dimensional simple Yetter-Drinfeld modules over $H$ are $ V(ti,j,\lambda) $, where $ i,j \in \Z $ and $ \lambda \in \k $. 
	\item [(2)] $ H $ has infinite-dimensional simple Yetter-Drinfeld modules if and only if $ t $ and $ n $ are not coprime. And in this case, all infinite-dimensional simple Yetter-Drinfeld modules over $H$ are $ V(i,j) $, where  $i \in \mathcal{J}$ and $j \in \mathbb{Z}$. 
	\item [(3)] The isomorphism relations between these  modules are given in Propositions \ref{prop:isocondition} and \ref{prop:isoconditioninfinite}.
\end{itemize}
\end{theorem}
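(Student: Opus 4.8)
The plan is to assemble Theorem~\ref{thm:YDmodule} directly from the structural results established in Sections~\ref{section3} and~\ref{section4}, since at this point all the real work has been done; the theorem is essentially a summary statement. First I would dispose of part~(1): every finite-dimensional simple Yetter-Drinfeld module over $H$ is isomorphic to some $V(ti,j,\lambda)$ by Proposition~\ref{prop:finiteisotoV}, and conversely every $V(ti,j,\lambda)$ with $i,j\in\Z$, $\lambda\in\k$ is indeed a finite-dimensional simple Yetter-Drinfeld module by Lemma~\ref{lem:yanzhengYDmo} together with the observation that $\dim_\k V(ti,j,\lambda)$ is $N-\phi(-i-j)+1$ or $n$, both finite. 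This gives the two inclusions needed for the stated equality of families.

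Next I would handle the dichotomy in part~(2). The statement ``$H$ has infinite-dimensional simple Yetter-Drinfeld modules if and only if $t$ and $n$ are not coprime'' is exactly Corollary~\ref{cor:hasinfinitemodule} (equivalently, its contrapositive), which itself rests on Corollary~\ref{cor:coversectijequal0} for the forward direction and Lemma~\ref{lem:yanzhengydmoinfinit} for the existence of $V(i,j)$ in the non-coprime case. Under the standing hypothesis $\gcd(t,n)>1$, the classification of the infinite-dimensional simple modules as exactly the family $\{V(i,j)\mid i\in\mathcal{J},\ j\in\Z\}$ follows: Proposition~\ref{prop:infiniteisotoV} shows every such module is some $V(i,j)$, and Lemma~\ref{lem:yanzhengydmoinfinit} shows each $V(i,j)$ is a genuine infinite-dimensional simple Yetter-Drinfeld module (infinite-dimensionality is immediate from its basis $\{v_k\mid k\geq 0\}$). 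One should also note that, by Corollary~\ref{cor:coversectijequal0} and Remark~\ref{rmk:onlyti}, the standard element of any infinite-dimensional simple module lies in $V_{[j]}^{[i]}$ with $i\in\mathcal{J}$, so the restriction $i\in\mathcal{J}$ is forced rather than merely convenient. Part~(3) is a pure cross-reference to Propositions~\ref{prop:isocondition} and~\ref{prop:isoconditioninfinite}, so nothing further is required.

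There is essentially no obstacle here; the only thing to be careful about is bookkeeping — making sure both directions (``these modules are simple Yetter-Drinfeld'' and ``every simple Yetter-Drinfeld module is one of these'') are cited for each of the finite- and infinite-dimensional cases, and that the separation between the two cases is clean (a simple Yetter-Drinfeld module is either finite- or infinite-dimensional, with no overlap, and Corollary~\ref{cor:dimensionleqn}(2) bounds the finite ones by $n$). If I wanted to be thorough I would also remark that the union $\{V(ti,j,\lambda)\}\cup\{V(i,j)\}$ exhausts \emph{all} simple objects of ${}^H_H\mathcal{YD}$, but that is already implicit in combining (1) and (2). The proof therefore reduces to: ``Parts (1) and (2) follow by combining Proposition~\ref{prop:finiteisotoV}, Lemma~\ref{lem:yanzhengYDmo}, Corollary~\ref{cor:coversectijequal0}, Corollary~\ref{cor:hasinfinitemodule}, Lemma~\ref{lem:yanzhengydmoinfinit}, and Proposition~\ref{prop:infiniteisotoV}; part (3) is Propositions~\ref{prop:isocondition} and~\ref{prop:isoconditioninfinite}.''
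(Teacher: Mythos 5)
Your proposal is correct and follows exactly the paper's route: the theorem is assembled as an immediate consequence of Lemmas~\ref{lem:yanzhengYDmo} and~\ref{lem:yanzhengydmoinfinit}, Propositions~\ref{prop:finiteisotoV} and~\ref{prop:infiniteisotoV}, and Corollary~\ref{cor:hasinfinitemodule}, with part~(3) a cross-reference to Propositions~\ref{prop:isocondition} and~\ref{prop:isoconditioninfinite}. The extra bookkeeping you add (both inclusions in each case, the forced condition $i\in\mathcal{J}$) is sound but not needed beyond what the cited results already give.
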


\begin{proof}
This follows immediately from Lemmas~\ref{lem:yanzhengYDmo} and~\ref{lem:yanzhengydmoinfinit}, Propositions~\ref{prop:finiteisotoV} and~\ref{prop:infiniteisotoV}, and Corollary~\ref{cor:hasinfinitemodule}.
\end{proof}

\section{Finite-dimensional Nichols algebras over the simple modules $ V(ti,j,\lambda) $ }\label{section5}

In this section, we determine the pairs $(N, i, j, \lambda)$ such that $\B(V(ti,j,\lambda))$ is finite-dimensional, where $i, j \in \mathbb{Z}$, $\lambda \in \Bbbk$, and $N \geq 1$. We proceed by discussing three cases, depending on the values of $N$ and $\lambda$.

\subsection{The case $ N=1 $}

First, consider the case $N = 1$ (equivalently, $t = 0$), which implies $ti = 0$ for all $i \in \mathbb{Z}$.

For any $i, j \in \mathbb{Z}$ and $\lambda \in \Bbbk$, let $\{v_0, v_1, \dots, v_m\}$ be the basis of $V(0,j,\lambda)$ defined in \eqref{basisofV}. By Proposition~\ref{lem:Atijblocks}, we have $\overline{\mathcal{A}_j^{0}} = \mathcal{A}_j^{0}(0)$ and
\begin{equation*}
	\A_j^0(m)=
	\left(\begin{array}{cccc}
		1 &   \\[5pt]
		  & 1 &   \\[5pt]
		  &   & \ddots  \\[5pt]
		  &   &   & 1
	\end{array}
	\right).
\end{equation*}
For any $0 \leq k \leq m$, we have $\delta(v_k) = 1 \otimes v_k$, meaning $V(0,j,\lambda)$ is semisimple as a comodule. Moreover, for any $0 \leq k, l \leq m$, the braiding satisfies $c(v_k \otimes v_l) = v_l \otimes v_k$. Hence, by Remark~\ref{rmk:nicholalgebra}(2), the Nichols algebra $\B(V(0,j,\lambda))$ must be infinite-dimensional.

\begin{lemma}\label{lem:finiten-nichols-case1}
If $t = 0$, then the Nichols algebra $\B(V(ti,j,\lambda))$ is infinite-dimensional for all $i, j \in \mathbb{Z}$ and $\lambda \in \Bbbk$.
\end{lemma}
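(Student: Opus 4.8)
The plan is to reduce the statement directly to Remark~\ref{rmk:nicholalgebra}(2) by exhibiting a nonzero element $v$ with $c(v\otimes v)=v\otimes v$ in $V:=V(ti,j,\lambda)=V(0,j,\lambda)$. First I would record the consequence of $t=0$: here $w=\xi^t=1$, so $N=o(\xi^t)=1$, and $ti=0$ for every $i\in\Z$; thus the module depends only on $j$ and $\lambda$. By Proposition~\ref{lem:Atijblocks} we have $\overline{\A_j^{0}}=\A_j^{0}(0)$, and the block-periodicity in Proposition~\ref{lem:Atijblocks}(3) forces $\A_j^{0}(m)$ to be the identity matrix of size $m+1$; equivalently $c_j^{0}(k,l)=\delta_{k,l}\,1$ for all $0\le l\le k\le m$. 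Hence $\delta(v_k)=1\otimes v_k$ for each basis vector $v_k$ of $V$.

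Next I would compute the self-braiding. Using the formula $c_{V,V}(u\otimes u')=u_{(-1)}\cdot u'\otimes u_{(0)}$ together with $\delta(v_k)=1\otimes v_k$, one gets $c(v_k\otimes v_l)=1\cdot v_l\otimes v_k=v_l\otimes v_k$ for all $0\le k,l\le m$. In particular $c(v_0\otimes v_0)=v_0\otimes v_0$, so $v_0$ is a nonzero element of $V$ fixed by its self-braiding. By Remark~\ref{rmk:nicholalgebra}(2) this yields $\dim_{\k}\B(V)=\infty$, which is the assertion of the lemma.

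There is no real obstacle here: the entire argument is a direct reading-off of the comatrix $\A_j^{0}(m)$ computed in the paragraph preceding the lemma, followed by the one-line verification that a trivially-braided element is present. The only point requiring a word of justification is that $t=0$ indeed forces $N=1$ and hence the identity comatrix; everything else is an immediate application of Remark~\ref{rmk:nicholalgebra}(2).
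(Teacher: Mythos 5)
Your proposal is correct and follows essentially the same route as the paper: both arguments observe that $t=0$ forces $N=1$, deduce from Proposition~\ref{lem:Atijblocks} that the comatrix $\A_j^{0}(m)$ is the identity so that $\delta(v_k)=1\otimes v_k$, conclude that the braiding is the flip $c(v_k\otimes v_l)=v_l\otimes v_k$, and invoke Remark~\ref{rmk:nicholalgebra}(2). No substantive difference.
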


\subsection{The case $ N\geq 2 $ and $\lambda=0 $}

Let $ N\geq 2 $ (equivalently, $ 1\leq t\leq n-1 $) and $ \lambda=0 $. 
For any   $ i,j \in \Z $,  we will prove that  $ V(ti,j,0) $ is isomorphic to $ F(\mathcal{V}_{i,j}) $ as a braided vector space,  where $ F(\mathcal{V}_{i,j}) $  is a simple Yetter-Drinfeld module over the (finite-dimensional) Taft algebra.  Therefore, to determine whether $ \B(V(ti,j,0)) $ is finite-dimensional, it suffices to determine whether $ \B(F(\mathcal{V}_{i,j})) $ is finite-dimensional.

Let us begin by introducing $F(\mathcal{V}_{i,j})$. Recall that $w = \xi^t$ is a primitive $N$-th root of unity. The authors in~\cite{MBG21} describe the explicit structure of the simple Yetter-Drinfeld modules over the generalized Taft algebra $A(Nm,N,0,w^{-1})$ (see Remark \ref{rmk:cannotrealize}). Below, we summarize the case for $A(N,N,0,w^{-1})$.

For any $i, j \in \mathbb{Z}$, the module $F(\mathcal{V}_{i,j})$ is a simple Yetter-Drinfeld module over $A(N,N,0,w^{-1})$, defined as follows:

\begin{itemize}
	\item $\left\{v_0,v_1,\cdots,v_r \right\}  $ is a $ \k $-basis of  $ F(\mathcal{V}_{i,j}) $, where 
	\begin{equation*}
		r=\phi (i+j+1)-1;
	\end{equation*}
	\item The action of $g$ is defined for $0 \leq k \leq r$ by
	\begin{equation*}
		g\cdot v_k=w^{k-j} v_k;
	\end{equation*}
	\item The action of $x$ is defined for $0 \leq k \leq r$ by
	\begin{equation}\label{basisofFV}
		x\cdot v_k=
		\begin{cases}
			-w^{k-j}v_{k+1}, & 0\leq k<r, \\[5pt]
			0  ,     & k=r .
		\end{cases} 
	\end{equation}
	\item The coaction is defined for $0 \leq k \leq r$ by 
	\begin{equation*}
		\delta(v_k) =\sum\limits_{l=0}^{k} \beta_{k-l,k}^{i,j} x^{k-l}g^{-i+l} \otimes v_l,
	\end{equation*}
     where 
    \begin{equation*}\label{formulaofbeta}
     		\beta_{k-l,k}^{i,j}=
     	\begin{cases}
     		\binom{k}{l}_{w}\prod\limits_{s=l}^{k-1}(w^{-i}-w^{j-s}), & 0\leq l<k, \\[5pt]
     		1   ,    & l=k. 
     	\end{cases}  
    \end{equation*}
\end{itemize}

We cannot assert that $V(ti,j,0)$ and $F(\mathcal{V}_{i,j})$ are identical as Yetter-Drinfeld modules because they are modules over different Hopf algebras. Furthermore, in our definition of the infinite-dimensional  Taft algebra (Definition~\ref{def:infinite-Taft}), $x$ is required to be a $(g^t,1)$-primitive element, whereas here $x$ is defined as a $(1,g)$-primitive element. However, we will prove that the braiding induced by $V(ti,j,0)$ and the braiding induced by $F(\mathcal{V}_{i,j})$ are identical.

\begin{lemma}\label{lem:isoasbraidedspace}
For any $ i,j\in \Z $, $ V(ti,j,0) $ is isomorphic to $ F(\mathcal{V}_{i,j}) $ as braided vector spaces.
\end{lemma}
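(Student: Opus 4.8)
The plan is to exhibit an explicit linear isomorphism $\Phi\colon V(ti,j,0)\to F(\mathcal V_{i,j})$ and verify that it intertwines the two braidings $c_{V,V}$ and $c_{F,F}$. First I would check that the two spaces have the same dimension: the basis of $V(ti,j,0)$ has size $m+1$ where $m=N-\phi(-i-j)$, while $F(\mathcal V_{i,j})$ has size $r+1$ with $r=\phi(i+j+1)-1$; using $\phi(-i-j)+\phi(i+j+1)=N+1$ (which holds since $\phi$ takes values in $\{1,\dots,N\}$ and $-i-j$ and $i+j+1$ are negatives-minus-one of each other mod $N$), one gets $m=r$, so $\Phi(v_k)=v_k$ on the nose is a candidate. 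The real content is that $c_{V,V}$ and $c_{F,F}$ agree under this identification, which by Remark~\ref{rmk:nicholalgebra}(1) is exactly what controls $\B(-)$.

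**Key steps.** I would compute $c_{V,V}(v_k\otimes v_l)=(v_k)_{(-1)}\cdot v_l\otimes(v_k)_{(0)}$ directly from the data: $\delta(v_k)=\sum_{p=0}^k c_j^{ti}(k,p)\otimes v_p=\sum_p \lambda_j^{ti}(k,p)\,x^{k-p}g^{ti-kt}\otimes v_p$ by Proposition~\ref{pro:cijformula}, and then apply $x^{k-p}g^{ti-kt}$ to $v_l$ using the $H$-action on $V(ti,j,0)$. Since $x\cdot v_l=v_{l+1}$ (with $x\cdot v_m=0$ because $\lambda=0$) and $g\cdot v_l=\xi^{j-l}v_l$, this yields an explicit expression $c_{V,V}(v_k\otimes v_l)=\sum_p (\text{scalar})\,v_{l+k-p}\otimes v_p$ where the scalar involves $\lambda_j^{ti}(k,p)$, $w=\xi^t$, and powers of $\xi$. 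On the other side, I would compute $c_{F,F}(v_k\otimes v_l)$ from $\delta(v_k)=\sum_p\beta_{k-p,k}^{i,j}x^{k-p}g^{-i+p}\otimes v_p$ and the $A(N,N,0,w^{-1})$-action in \eqref{basisofFV}, again getting a sum over $p$ with scalars built from $\beta_{k-p,k}^{i,j}$ and powers of $w$. The crux is then a scalar identity matching the two: one must show the $\lambda_j^{ti}(k,p)$-coefficients (with the extra $\xi$-powers coming from $g$ acting and from $x$ being a $(g^t,1)$-primitive rather than a $(1,g)$-primitive) coincide with the $\beta_{k-p,k}^{i,j}$-coefficients (with their $w$-powers). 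Using the explicit formula $\lambda_j^{ti}(k,p)=\binom{k}{p}_{w}w^{-(k-p)p}\prod_{l=p+1}^{k}(w^{j}-w^{l-1-i})$ from Proposition~\ref{prop:explictlambda} and the displayed formula for $\beta_{k-p,k}^{i,j}=\binom{k}{p}_w\prod_{s=p}^{k-1}(w^{-i}-w^{j-s})$, one reindexes the product ($l=s+1$ turns $\prod_{l=p+1}^k(w^j-w^{l-1-i})$ into $\prod_{s=p}^{k-1}(w^j-w^{s-i})=(-w^{j-i})^{k-p}\prod_{s=p}^{k-1}(w^{-i}-w^{j-s})\cdot$(unit corrections)), so the two products agree up to an explicit monomial in $w$, and that monomial should be absorbed exactly by the discrepancy between the $\xi$-power bookkeeping on the $V$-side and the $w$-power bookkeeping on the $F$-side.

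**Main obstacle.** The bookkeeping of powers of $\xi$ versus powers of $w=\xi^t$ is where I expect the real difficulty to lie, precisely because the two modules live over Hopf algebras with differently-normalized primitive elements ($x$ is $(g^t,1)$-primitive for $H$ but $(1,g)$-primitive for $A(N,N,0,w^{-1})$), so the naive "same basis" map may need a rescaling $v_k\mapsto \gamma_k v_k$ for suitable nonzero constants $\gamma_k$ to make the braidings literally equal rather than merely conjugate. I would determine the $\gamma_k$ by forcing the $p=k$ and $p=0$ terms to match and then check the general $p$ case follows; once the scalar identity is verified for all $0\le p\le k\le m$, the map $v_k\mapsto\gamma_k v_k$ is a morphism of braided vector spaces, and being a bijection it is an isomorphism, which completes the proof.
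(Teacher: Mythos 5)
Your proposal follows essentially the same route as the paper: match dimensions via the identity $\phi(-i-j)+\phi(i+j+1)=N+1$, compute both braidings explicitly on basis vectors, and reduce everything to the scalar identity $\beta_{p-l,p}^{i,j}(-1)^{p-l}w^{(p-l)(p-l-1)/2}=\lambda_j^{ti}(p,l)$ obtained by reindexing the products. The only difference is that you hedge about needing a rescaling $v_k\mapsto\gamma_k v_k$; the paper's computation shows the sign and $w$-power produced by the $A(N,N,0,w^{-1})$-action of $x^{p-l}$ exactly absorb the discrepancy, so the unrescaled map $v_k\mapsto u_k$ already intertwines the braidings.
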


\begin{proof}
Let $\{v_0, v_1, \dots, v_m\}$ be the basis of $V(ti,j,0)$ defined in \eqref{basisofV}, where $m = N - \phi(-i-j)$. Similarly, let $\{u_0, u_1, \dots, u_r\}$ be the basis of $F(\mathcal{V}_{i,j})$ defined in \eqref{basisofFV}, where $r = \phi(i+j+1) - 1$. Denote the braidings on $V(ti,j,0)$ and $F(\mathcal{V}_{i,j})$ by $c_1$ and $c_2$, respectively.

From the definition of $\phi$, we observe that $\phi(i+j+1) = N - \phi(-i-j) + 1 = m + 1$, which implies $m = r$. For $k > m$, we set $v_k = u_k = 0$. For any $0 \leq p, k \leq m$, we have:
\begin{eqnarray*}
	c_1 (v_p \otimes v_k) 
	&=& (v_p)_{-1} \cdot v_k \otimes (v_p)_0  \\
	&=& \sum\limits_{l=0}^{p}  \lambda_j^{ti}(p,l)x^{p-l}g^{t(i-p)} \cdot v_k \otimes v_l   \\
	&=& w^{(i-p)(j-k)}  \sum\limits_{l=0}^{p}  \lambda_j^{ti}(p,l) v_{p+k-l} \otimes v_l.
\end{eqnarray*}

Similarly,
\begin{eqnarray*}
	& &c_2 (u_p \otimes u_k) \\
	&=& (u_p)_{-1} \cdot u_k \otimes (u_p)_0  \\
	&=& \sum\limits_{l=0}^{p}  \beta_{p-l,p}^{i,j} x^{p-l}g^{-i+l} \cdot u_k \otimes u_l \\
	&=& \sum\limits_{l=0}^{p} w^{(k-j)(-i+l)}  \beta_{p-l,p}^{i,j} (-1)^{p-l} \left(\prod\limits_{s=k}^{k+p-l-1} w^{s-j} \right)   u_{p+k-l} \otimes u_l  \\
	&=& \sum\limits_{l=0}^{p} w^{(k-j)(-i+l)}  \beta_{p-l,p}^{i,j} (-1)^{p-l}   w^{(k-j)(p-l)}\left(\prod\limits_{s=0}^{p-l-1} w^s \right)   u_{p+k-l} \otimes u_l  \\
	&=& w^{(i-p)(j-k)} \sum\limits_{l=0}^{p}   \beta_{p-l,p}^{i,j} (-1)^{p-l}  w^{\frac{(p-l)(p-l-1)}{2}}  u_{p+k-l} \otimes u_l  \\
    &=& w^{(i-p)(j-k)} \sum\limits_{l=0}^{p} \lambda_j^{ti}(p,l)   u_{p+k-l} \otimes u_l.
\end{eqnarray*}

The last equality follows from the identity:
\begin{eqnarray*}
	& & \beta_{p-l,p}^{i,j} (-1)^{p-l}  w^{\frac{(p-l)(p-l-1)}{2}} \\
	&=& w^{\frac{(p-l)(p-l-1)}{2}} (-1)^{p-l} \binom{p}{l}_{w} \left(\prod\limits_{r=l}^{p-1}(w^{-i}-w^{j-r})  \right)  \\
	&=& w^{\frac{(p-l)(p-l-1)}{2}} \binom{p}{l}_{w} \left(\prod\limits_{r=l}^{p-1}(w^{j-r}-w^{-i})  \right) \\
	&=& \binom{p}{l}_{w} w^{\frac{(p-l)(p-l-1)}{2}}  \left(\prod\limits_{r=l+1}^{p}  w^{-r+1} \right)  \left(\prod\limits_{r=l+1}^{p}  (w^j-w^{r-1-i})  \right)\\
	&=& \binom{p}{l}_{w} w^{\frac{(p-l)(p-l-1)}{2}}  w^{-\frac{(p-l)(p+l-1)}{2}}    \left(\prod\limits_{r=l+1}^{p} R_j^{ti}(r,0) \right)  \\
	&=& \binom{p}{l}_{w} w^{-(p-l)l}    \left(\prod\limits_{r=l+1}^{p} R_j^{ti}(r,0) \right)   \\
	&=& \lambda_j^{ti}(p,l).
\end{eqnarray*}

Define a linear map $f: V(ti,j,0) \to F(\mathcal{V}_{i,j})$ by $f(v_k) = u_k$ for all $0 \leq k \leq m$. The above calculations show that $f$ preserves the braiding, i.e., $(f \otimes f) \circ c_1 = c_2 \circ (f \otimes f)$. Since $f$ is bijective, it is an isomorphism of braided vector spaces.
\end{proof}

Now, it suffices to determine whether  $ \B( F(\mathcal{V}_{i,j}) ) $ is finite-dimensional. Using results of Andruskiewitsch and Angiono \cite{AA20}, the authors of \cite{MBG21} reduce this problem to determining whether a Nichols algebra of diagonal type is finite-dimensional.

Write $ C_N $  for the cyclic
group of order $ N $ with generator $ g $. For any $ i,j \in \Z $, let $ W_{i,j}  $ be the Yetter-Drinfeld module over $ C_N $, defined as follows:

\begin{itemize}
	\item $\left\{x,y \right\}  $ is a $ \k $-basis of  $ W_{i,j} $;
	\item The action of $g$ is defined by
	\begin{equation*}
		g\cdot x=wx \ ,\  g \cdot y=w^j y ;
	\end{equation*}

	\item The coaction is defined by
	\begin{equation*}
		\delta(x)=g\otimes x \ ,\  \delta(y)=g^i \otimes y  .
	\end{equation*}	
\end{itemize}

By Lemma \ref{lem:isoasbraidedspace} and \cite[Theorem 5.3]{MBG21}, we obtain the following result.

\begin{lemma}\label{lem:VijandWij}
For any $ i,j\in \Z $, it holds that $  \B(V(ti,j,0)) $ is finite-dimensional if and only if $\B(W_{-i,-j}) $ is finite-dimensional.	
\end{lemma}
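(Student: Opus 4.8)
The plan is to obtain the statement by concatenating three ingredients that are already at our disposal. First, Lemma~\ref{lem:isoasbraidedspace} gives an isomorphism of braided vector spaces $V(ti,j,0)\cong F(\mathcal{V}_{i,j})$. By Remark~\ref{rmk:nicholalgebra}(1), the Nichols algebra of a Yetter-Drinfeld module, as an algebra and a coalgebra, depends only on the underlying braided vector space $(V,c_{V,V})$; hence $\B(V(ti,j,0))\cong \B(F(\mathcal{V}_{i,j}))$, and in particular the two are simultaneously finite- or infinite-dimensional. This reduces the question to $F(\mathcal{V}_{i,j})$, which is a simple Yetter-Drinfeld module over the finite-dimensional generalized Taft algebra $A(N,N,0,w^{-1})$.

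Second, I would invoke \cite[Theorem~5.3]{MBG21}. Using the linearization results of Andruskiewitsch--Angiono \cite{AA20} already cited in this section, the authors of \cite{MBG21} reduce the computation of the Nichols algebra of a simple Yetter-Drinfeld module over $A(Nm,N,0,w^{-1})$ to that of a rank-two Nichols algebra of diagonal type over the cyclic group $C_N$. Specializing their result to $m=1$ and to the module $F(\mathcal{V}_{i,j})$ yields: $\B(F(\mathcal{V}_{i,j}))$ is finite-dimensional if and only if $\B(W_{-i,-j})$ is finite-dimensional, where $W_{-i,-j}$ is the two-dimensional diagonal Yetter-Drinfeld module over $C_N$ introduced above. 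Chaining this equivalence with the one from the first step gives the claim.

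The only genuinely delicate point is the index bookkeeping, and I expect it to be the main---though routine---obstacle. One must check that $F(\mathcal{V}_{i,j})$ as defined here (with its explicit basis, the $C_N$-weights $w^{k-j}$, and the comodule coefficients $\beta^{i,j}_{k-l,k}$) coincides with the simple Yetter-Drinfeld module over $A(N,N,0,w^{-1})$ to which \cite[Theorem~5.3]{MBG21} is applied, and that the passage through their reduction genuinely produces the sign reversal $i\mapsto -i$, $j\mapsto -j$ in the diagonal datum attached to $W$. This amounts to reconciling conventions---which group-like acts on the left versus the right, whether $x$ is a $(1,g)$- or a $(g^{t},1)$-primitive, and the normalization of the coradical grading---rather than to proving anything substantively new.
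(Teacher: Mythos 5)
Your proposal is correct and follows exactly the paper's route: the paper likewise combines Lemma~\ref{lem:isoasbraidedspace} (via Remark~\ref{rmk:nicholalgebra}(1)) with \cite[Theorem~5.3]{MBG21} to obtain the equivalence. Your additional remarks on reconciling conventions are reasonable caution but do not change the argument.
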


It remains to classify all pairs $(N, i, j)$ such that $\B(W_{-i,-j})$ is finite-dimensional.
We observe that $W_{-i,-j}$ is a braided vector space of diagonal type associated with the basis $\left\{x,y \right\}  $. Hence, the finite-dimensionality of its Nichols algebra can be decided by appealing to Heckenberger's classification \cite{H08,H09}, which establishes a correspondence between finite-dimensional Nichols algebras of diagonal type and certain generalized Dynkin diagrams. We now proceed to systematically apply this theory.

The braiding $ c $ associated with $ W_{-i,-j} $ is given by:
\begin{eqnarray*}
	& & c(x\otimes x)=w x\otimes x , \  c(x\otimes y)= w^{-j} y \otimes x,  \\
	& & c(y\otimes x)=w^{-i} x \otimes y , \  c(y\otimes y)=w^{ij} y\otimes y.
\end{eqnarray*}

Hence, the corresponding braiding matrix and generalized Dynkin diagram $ D_{i,j} $ are 
\begin{eqnarray*}
	\boldsymbol{q}
	&=& \left(\begin{array}{cccc}
		        w &   w^{-j}      \\
		        w^{-i}  & w^{ij}
	          \end{array}    
         \right),   
\end{eqnarray*}
\begin{eqnarray}\label{Dynkindiagram}
D_{i,j}: \quad 
\begin{tikzpicture}[
	scale=1.0,
	every node/.style={
		circle, 
		draw=black,        
		fill=white,        
		inner sep=2pt,
	},
	label distance=0.4mm     
	]
	\node[label=above:$w$] (A) at (0,0) {};
	\node[label=above:$w^{ij}$]  (B) at (2,0) {};	
\end{tikzpicture}
\qquad \quad  \text{or} \qquad \quad
D_{i,j}: \quad 
\begin{tikzpicture}[
	scale=1.0,
	every node/.style={
		circle, 
		draw=black,        
		fill=white,        
		inner sep=2pt,
	},
	label distance=0.4mm     
	]
	\node[label=above:$w$] (A) at (0,0) {};
	\node[label=above:$w^{ij}$]  (B) at (2,0) {};
	
	\draw (A) --node[fill=none,draw=none,above,yshift=-5pt]{$ w^{-i-j} $} (B) ;
	
\end{tikzpicture}.
\end{eqnarray}


The generalized Dynkin diagrams corresponding to arithmetic root systems of
rank 2 are listed in \cite[Table 1]{H08}. These diagrams depend on fixed parameters $ q,\zeta \in \k^* $.  Following the notation in \cite{MBG21}, we refer to these as Heckenberger diagrams. Each diagram is indexed as $H_{k,l}$, where $k$ denotes the row number and $l$ denotes the position within that row from left to right.
For example, $ H_{4,1} $ denotes the diagram 
$ \begin{tikzpicture}[
	scale=0.7,
	every node/.style={
		circle, 
		draw=black,        
		fill=white,        
		inner sep=1pt,
	},
	label distance=0.1mm     
	]
	\node[label=above:$q$] (A) at (0,0) {};
	\node[label=above:$q^{2}$]  (B) at (2,0) {};
	
	\draw (A) --node[fill=none,draw=none,above,yshift=-5pt]{$ q^{-2} $} (B) ;
	
\end{tikzpicture} $
, with $ q\in \k^* \textbackslash \{-1,1\} $. The set of indices of Heckenberger
diagrams is the following:
\begin{eqnarray*}
	\mathcal{I} \ := & \{& (1,1),(2,1),(3,1),(3,2),(4,1),(5,1),(5,2),(6,1),(6,2),(7,1),(7,2), \\   
	&&(8,1), (8,2),(8,3),(8,4),(8,5),(9,1),(9,2),(9,3),(10,1), (10,2),(10,3),     \\
	&& (11,1),(12,1),(12,2),(12,3),(13,1),(13,2),(13,3),(13,4),(14,1),(14,2),   \\
	&& (15,1),(15,2),(15,3),(15,4),(16,1),(16,2),(16,3),(16,4), (17,1),(17,2) \  \} .
\end{eqnarray*}

By invoking \cite[Corollary 6]{H08}, we can determine whether $ \B(W_{-i,-j}) $ is finite-dimensional through $ D_{i,j} $, as formalized in the following lemma. 
\begin{lemma}\label{lem:WijandDij}
For any $ i,j\in \Z $, it holds that $ \B(W_{-i,-j}) $ is finite-dimensional if and only if $
D_{i,j}=H_{k,l}  $ for some $ (k,l)\in \mathcal{I} $ and $ w^{ij}\neq 1 $.	
\end{lemma}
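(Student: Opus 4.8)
The plan is to deduce the lemma from Heckenberger's classification of finite-dimensional rank-two Nichols algebras of diagonal type \cite{H08}, applied to the generalized Dynkin diagram $D_{i,j}$ computed in \eqref{Dynkindiagram}. Recall that $W_{-i,-j}$ is of diagonal type with $q_{11}=w$, $q_{22}=w^{ij}$ and edge datum $q_{12}q_{21}=w^{-i-j}$, and that by Remark~\ref{rmk:nicholalgebra}(1) the Nichols algebra $\B(W_{-i,-j})$ depends only on this braided vector space. First I would record the elementary necessary condition supplied by Remark~\ref{rmk:nicholalgebra}(2): if some diagonal entry $q_{pp}$ equals $1$, then the corresponding basis vector $v$ satisfies $c(v\otimes v)=v\otimes v$, forcing $\dim_\k\B(W_{-i,-j})=\infty$. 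Since $w=\xi^t$ has order $N\geq 2$, the entry $q_{11}=w$ is automatically $\neq 1$, so among the two diagonal entries only $q_{22}=w^{ij}\neq 1$ is a genuine constraint; this already gives the necessity of the hypothesis $w^{ij}\neq 1$.

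Next I would establish the two implications. For $(\Rightarrow)$, assume $\B(W_{-i,-j})$ is finite-dimensional. Then $w^{ij}\neq 1$ by the previous paragraph, and \cite[Corollary~6]{H08} forces the generalized Dynkin diagram $D_{i,j}$ to appear in \cite[Table~1]{H08}, i.e.\ $D_{i,j}=H_{k,l}$ for some $(k,l)\in\mathcal{I}$ (possibly after interchanging the two vertices). For $(\Leftarrow)$, assume $D_{i,j}=H_{k,l}$ for some $(k,l)\in\mathcal{I}$ and $w^{ij}\neq 1$. If $w^{-i-j}\neq 1$, so that $D_{i,j}$ is connected, then \cite[Corollary~6]{H08} says at once that $D_{i,j}$ is the diagram of a finite arithmetic root system, hence $\B(W_{-i,-j})$ is finite-dimensional. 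If instead $w^{-i-j}=1$, so that $q_{12}q_{21}=1$ and $D_{i,j}$ is disconnected, then in $\B(W_{-i,-j})$ one has $xy=q_{12}\,yx$ together with $x^{N}=0$ and $y^{o(w^{ij})}=0$, so $\B(W_{-i,-j})$ is spanned by the monomials $x^{a}y^{b}$ with $0\leq a<N$ and $0\leq b<o(w^{ij})$; since $w^{ij}$ is a root of unity different from $1$ its order is finite, and therefore $\B(W_{-i,-j})$ is finite-dimensional.

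I expect no deep obstacle: the statement is essentially a dictionary entry for Heckenberger's theorem, and the only thing needing care is the bookkeeping — keeping track of the two shapes of $D_{i,j}$ in \eqref{Dynkindiagram} (edge present or absent) and of the vertex-swapping symmetry when matching $D_{i,j}$ with a concrete $H_{k,l}$, together with the observation that the disconnected rank-two diagram does occur in \cite[Table~1]{H08}. The genuinely laborious step — determining, for each $(k,l)\in\mathcal{I}$, exactly which values of $N$ and which congruences on $i$ and $j$ realize $D_{i,j}=H_{k,l}$ — is deferred to the explicit case analysis leading to Tables~1--6.
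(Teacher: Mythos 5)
Your argument is correct and follows essentially the same route as the paper, which treats the lemma as an immediate consequence of Heckenberger's classification \cite[Corollary~6]{H08} and gives no further proof; your added details --- the necessity of $w^{ij}\neq 1$ via Remark~\ref{rmk:nicholalgebra}(2) and the explicit spanning-set argument for the disconnected case $q_{12}q_{21}=w^{-i-j}=1$ --- are sound and supply justification the paper leaves implicit. The one phrase to tighten is ``$D_{i,j}$ is the diagram of a finite arithmetic root system, hence $\B(W_{-i,-j})$ is finite-dimensional'': that implication requires the vertex labels to be roots of unity different from $1$, which does hold here since every label of $D_{i,j}$ is a power of the primitive $N$-th root of unity $w$ with $N\geq 2$ and $w^{ij}\neq 1$ by hypothesis.
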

In fact, $ w^{ij}\neq 1 $ automatically when $ (k,l)\neq (1,1) $. In these cases, $ \B(W_{-i,-j}) $ is finite-dimensional if and only if $D_{i,j}=H_{k,l}  $.  Therefore, our primary task reduces to classifying all pairs $ (N,i,j) $ such that $ D_{i,j} $ is a Heckenberg diagram. We first present all possible pairs $ (k,l) $. Consider the following subset of indices of $ \mathcal{I} $:
\begin{eqnarray*}
	\mathcal{L} \ := & \{& (1,1),(2,1),(3,1),(4,1),(5,1),(5,2),(6,1),(6,2),(9,3),(10,1), (10,3),     \\
	&& (11,1),(12,1),(12,3),(13,2),(13,4),(14,2),   (15,1),(15,2),(16,1),(16,2)  \  \} .
\end{eqnarray*}




\begin{lemma}\label{lem:mustinL}
Let $ i,j \in \Z $ and assume $ D_{i,j} $ is a Heckenberger diagram $ H_{k,l} $. Then $ (k,l) \in \mathcal{L} $.
\end{lemma}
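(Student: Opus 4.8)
\emph{Proof strategy.} The plan is to prove Lemma~\ref{lem:mustinL} by matching $D_{i,j}$ against Heckenberger's list of rank-two arithmetic root systems in \cite[Table~1]{H08}. First I record the precise shape of $D_{i,j}$: by \eqref{Dynkindiagram} its two vertices carry the labels $w$ and $w^{ij}$ and (when $w^{i+j}\neq 1$) are joined by an edge labelled $w^{-i-j}$, where $w=\xi^{t}$ is a primitive $N$-th root of unity; equivalently $D_{i,j}$ is the generalized Dynkin diagram of the diagonal braiding matrix $\boldsymbol q=\bigl(\begin{smallmatrix} w & w^{-j}\\ w^{-i} & w^{ij}\end{smallmatrix}\bigr)$. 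The structural feature I want to exploit is that, writing all labels as powers of $w$, the ``logarithm matrix'' $\bigl(\begin{smallmatrix} 1 & -j\\ -i & ij\end{smallmatrix}\bigr)$ has determinant $0$; in other words the row $(-i,ij)$ attached to the second vertex is the integer multiple $-i$ of the row $(1,-j)$ attached to the first, and the first has a unit in its first entry. Consequently the label $w$ of one vertex generates the cyclic group containing all three labels, and the exponents of the second vertex and of the edge form a pair of the very special type $\bigl(ij,\,-(i+j)\bigr)$ modulo $N$; in particular the congruence $T^{2}-(i+j)T+ij\equiv 0\pmod N$ has the root $T=i$.

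Now suppose $D_{i,j}=H_{k,l}$ for some $(k,l)\in\mathcal I$, rank-two diagrams being compared up to the flip of the two vertices. Reading the labels of $H_{k,l}$ from \cite[Table~1]{H08} --- a generic parameter $q$ together with, in some rows, a root of unity $\zeta$ of a fixed order --- the fact that $w$ ranges over all primitive $N$-th roots of unity with $N$ arbitrary forces $w$ to be placed at a generic vertex of $H_{k,l}$; matching the remaining vertex and the edge then determines $\zeta$, imposes a divisibility of $N$ by $\mathrm{ord}(\zeta)$, and expresses the second-vertex exponent $b$ and the edge exponent $c$ as explicit residues modulo $N$. One then checks, for each of the $21$ indices $(k,l)\in\mathcal I\setminus\mathcal L$, that no admissible choice of $q,\zeta,N$ makes the logarithm matrix have rank $\le 1$ --- equivalently, that $T^{2}+cT+b\equiv 0\pmod N$ has no solution --- contradicting the structural feature above. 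Hence $(k,l)\in\mathcal L$. Throughout one may use the remark following Lemma~\ref{lem:WijandDij} that $w^{ij}\neq 1$ unless $(k,l)=(1,1)$, which eliminates the degenerate matchings immediately.

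The substance of the argument --- and the main obstacle --- is this final bookkeeping step: one must run through the whole of \cite[Table~1]{H08} and, for the diagrams outside $\mathcal L$, rule out every specialization of the generic parameter and of $\zeta$. The delicate cases are the rows in which $\zeta$ can be absorbed into $\langle w\rangle$ (e.g.\ $\zeta\in R_{2}$ forcing $2\mid N$, $\zeta\in R_{3}$ forcing $3\mid N$, $\zeta\in R_{4}$ forcing $4\mid N$, and so on), since there the diagram genuinely becomes one whose labels all lie in a single cyclic group and one must compute $b$ and $c$ explicitly and verify that the relevant quadratic congruence is unsolvable. Each such instance is nonetheless a finite numerical check, so the proof reduces to a long but routine verification once the structural reformulation above is in place.
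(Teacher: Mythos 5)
Your proposal is correct and follows essentially the same route as the paper: the paper likewise excludes the indices in $\mathcal{I}\setminus\mathcal{L}$ by observing that in $D_{i,j}$ the label of one vertex must generate the labels of the other vertex and the edge (your rank-$\le 1$ logarithm matrix), which rules out most of the remaining diagrams, and by checking that the quadratic congruence (e.g.\ $i(3-i)\equiv 7\pmod{14}$ for $H_{17,1}$) is unsolvable for the rest. Like you, the paper treats only representative cases explicitly and leaves the remaining finite verifications as routine.
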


\begin{proof}
We show that the remaining cases are not possible. 	First consider the cases $ H_{17,1} $ and $ H_{17,2} $. Suppose that 
$ D_{i,j}=H_{17,1}:
\begin{tikzpicture}[
	scale=1.0,
	every node/.style={
		circle, 
		draw=black,        
		fill=white,        
		inner sep=1pt,
	},
	label distance=0.1mm     
	]
	\node[label=above:$-\zeta$] (A) at (0,0) {};
	\node[label=above:$-1$]  (B) at (2,0) {};
	
	\draw (A) --node[fill=none,draw=none,above,yshift=-5pt]{$ -\zeta^{-3} $} (B) ;
	
\end{tikzpicture} $, with $ \zeta \in R_7 $. Then $ w=-\zeta, w^{ij}=-1 $ and $ w^{-i-j}=-\zeta^3 $.
This implies that $ N=14, ij   \equiv 7 \pmod {14}  $ and $  -i-j   \equiv -3  \pmod {14} $. Consequently we have $ i(3-i) \equiv 7  \pmod {14} $. However, this equation admits no solution, leading to a contradiction. Hence $ D_{i,j} $ cannot be equal to $ H_{17,1} $.  The case $ H_{17,2} $ can be treated similarly.
	
Let us now turn to the cases in $ \mathcal{K}= \mathcal{I} \ \textbackslash  \left(\mathcal{L} \cup \left\{(17,1),(17,2)\right\} \right) $.
Note that $ D_{i,j} $ consists of two vertices and one edge, all labelled with roots of unity, and the root of unity at one vertex generates the labels of the other vertex and the edge. Therefore it suffices to prove that for all  $ (k,l)\in \mathcal{K}  $, $ H_{k,l} $ does not possess this property.  Let us prove this for the case $ H_{8,1} $; the other cases follow similarly.

$ H_{8,1} $ denotes the diagram 
$ \begin{tikzpicture}[
	scale=1.0,
	every node/.style={
		circle, 
		draw=black,        
		fill=white,        
		inner sep=1pt,
	},
	label distance=0.1mm     
	]
	\node[label=above:$-\zeta^{-2}$] (A) at (0,0) {};
	\node[label=above:$-\zeta^{2}$]  (B) at (2,0) {};
	
	\draw (A) --node[fill=none,draw=none,above,yshift=-4pt]{$ -\zeta^{3} $} (B) ;
	
\end{tikzpicture} $, with $ \zeta\in R_{12} $. Note that $-\zeta^{-2}  $ and
$-\zeta^{2}  $ are primitive 3rd roots of unity, while 
$-\zeta^{3}  $  is a primitive 4th root of unity; consequently, neither  $-\zeta^{-2}  $ nor
$-\zeta^{2}  $  can generate $-\zeta^{3}  $. Hence $ D_{i,j} $ cannot be equal to $ H_{8,1} $.

\end{proof}

In the following lemma, we state the converse of Lemma \ref{lem:mustinL}. In particular,
we present the list of all pairs $ (N,i,j) $ such that $ \B (V(ti,j,0)) $ is finite-dimensional, and the
conditions that the integers $ N, i $ and $ j $ must satisfy.

\begin{lemma}\label{lem:finiten-nichols-case2}
For all pairs $(k,l) \in \mathcal{L}$, there exists a triple $(N, i, j)$ with $N \geq 2$ and $i,j \in \mathbb{Z}$ such that $D_{i,j} = H_{k,l}$. Moreover, for any such triple $(N, i, j)$, it holds that $ \dim_{\k} \B (V(ti,j,0)) < \infty $ if and only if  $ ij\not \equiv 0 \pmod N $.  The complete classification of triples $(N, i, j)$ yielding finite-dimensional Nichols algebras appears in the following Table 1-6.
\end{lemma}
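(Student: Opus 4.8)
# Proof Proposal for Lemma \ref{lem:finiten-nichols-case2}

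The plan is to reduce everything to the combinatorics of the generalized Dynkin diagram $D_{i,j}$ and the explicit list of Heckenberger diagrams in \cite[Table 1]{H08}, using the chain of reductions already established: $\dim_{\k}\B(V(ti,j,0))<\infty$ iff $\dim_{\k}\B(W_{-i,-j})<\infty$ (Lemma \ref{lem:VijandWij}) iff $D_{i,j}=H_{k,l}$ for some $(k,l)\in\mathcal{I}$ and $w^{ij}\neq 1$ (Lemma \ref{lem:WijandDij}), and the restriction $(k,l)\in\mathcal{L}$ from Lemma \ref{lem:mustinL}. The condition $w^{ij}\neq 1$ is precisely $ij\not\equiv 0\pmod N$ since $w=\xi^t$ has order $N$, so once $D_{i,j}=H_{k,l}$ is realized the last equivalence is immediate (and automatic whenever $(k,l)\neq(1,1)$, as noted in the text). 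Thus the genuine content of the lemma is: (a) for each $(k,l)\in\mathcal{L}$, produce at least one triple $(N,i,j)$ with $N\geq 2$ realizing $D_{i,j}=H_{k,l}$, and (b) solve, for each such $(k,l)$, the system of congruences that $D_{i,j}=H_{k,l}$ imposes on $(N,i,j)$, thereby filling in Tables 1--6.

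First I would set up the general correspondence. Writing $w=\xi^t$ of order $N$, the diagram $D_{i,j}$ has vertices labelled $w$ and $w^{ij}$, with edge label $w^{-i-j}$ (the edge being absent exactly when $w^{-i-j}=1$, i.e.\ $i+j\equiv 0\pmod N$). Matching $D_{i,j}$ against a Heckenberger diagram $H_{k,l}$ with parameter $\zeta$ (a primitive $d$-th root of unity for the appropriate $d$ attached to that row of \cite[Table 1]{H08}) amounts to: identifying $w$ with the label at one of the two vertices, which forces $N=\mathrm{ord}(\text{that label as a power of }\zeta)$ and expresses $\zeta$ as a power of $w$; then reading off the congruences $ij\equiv(\text{exponent})\pmod N$ and $i+j\equiv-(\text{exponent})\pmod N$ from the other vertex and the edge. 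Each $H_{k,l}$ has two vertices, so in principle two labellings must be tried; for several diagrams symmetry or the vertex-degree data collapses these. Solving the resulting pair of congruences in $(i,j)$ is elementary: from $i+j\equiv a$ and $ij\equiv b\pmod N$, $i$ is a root of $X^2-aX+b\equiv 0\pmod N$, which one analyzes prime-power by prime-power via Hensel/quadratic-residue considerations — this is exactly the kind of computation performed in the proof of Lemma \ref{lem:mustinL} (e.g.\ the $i(3-i)\equiv 7\pmod{14}$ analysis), and it either yields a parametrized family of solutions or shows none exists.

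The key steps, in order: (1) state the dictionary translating $D_{i,j}=H_{k,l}$ into congruences on $(N,i,j)$, noting the two possible vertex matchings; (2) for each $(k,l)\in\mathcal{L}$, carry out this matching, determine $N$, and reduce to a quadratic congruence; (3) exhibit one explicit solution (proving realizability, part (a)) and describe the full solution set (for the tables); (4) invoke Lemmas \ref{lem:WijandDij} and \ref{lem:mustinL} to conclude that these are exactly the triples with $\dim_{\k}\B(W_{-i,-j})<\infty$, hence with $\dim_{\k}\B(V(ti,j,0))<\infty$, and observe that in every case the extra hypothesis $w^{ij}\neq 1$ is equivalent to $ij\not\equiv 0\pmod N$; (5) assemble the data into Tables 1--6. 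The main obstacle is purely bookkeeping endurance: twenty-one diagrams $H_{k,l}$, each with up to two vertex matchings and each forcing a small quadratic Diophantine congruence whose solution depends on the arithmetic of $N$ modulo small primes. There is no conceptual difficulty beyond what already appears in Lemma \ref{lem:mustinL}; the risk is a clerical slip in one of the many congruence calculations, so I would organize the case analysis uniformly (always matching $w$ to the left vertex first, recording $(\mathrm{ord}\,\zeta, \text{vertex exponent}, \text{edge exponent})$ as a triple) and cross-check each entry of the tables against the constraint $i+j\equiv-ij\cdot(\text{something})$ before finalizing.
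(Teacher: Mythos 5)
Your proposal is correct and follows essentially the same route as the paper: reduce via Lemmas \ref{lem:VijandWij}, \ref{lem:WijandDij} and \ref{lem:mustinL} to matching $D_{i,j}$ (vertices $w$, $w^{ij}$, edge $w^{-i-j}$) against each $H_{k,l}$ with $(k,l)\in\mathcal{L}$, trying both vertex identifications where relevant, and then solving the resulting quadratic congruence in $i$ (the paper writes it as $i(m-i)\equiv b \pmod N$ after fixing $\dim_{\k}V(ti,j,0)=m+1$, which pins down $i+j$, exactly your $X^2-aX+b\equiv 0$). The identification of the hypothesis $w^{ij}\neq 1$ with $ij\not\equiv 0\pmod N$ and the observation that it is automatic for $(k,l)\neq(1,1)$ also match the paper.
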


\begin{proof}
We first provide an explanation of the table below.
Note that if  $\dim_{\k} V(ti,j,0)=k $, then $ 1 \leq k \leq N $ and $ j\equiv k-1-i \pmod N $. 
In each table below, the dimension of $ V(ti,j,0) $ is fixed, hence $ j $ is entirely determined by $ N $ and $ i $. When $ N $ satisfies the condition in the first column and $ i $ satisfies the condition in the second column, $ \B (V(ti,j,0)) $ is finite-dimensional, and 
$ D_{i,j}=H_{k,l} $ for any $ (k,l) $ belonging to the third column of the table.

From Lemma \ref{lem:VijandWij} and Lemma \ref{lem:WijandDij}, it follows that $ \B (V(ti,j,0)) $ is finite-dimensional if and only if $
D_{i,j}=H_{k,l}  $ for some $ (k,l)\in \mathcal{I} $ and $ {ij} \not \equiv 0 \pmod N  $.	
We now classify all triples $ (N,i,j) $ such that  $ D_{i,j}=H_{k,l} $, where $ (k,l) \in \mathcal{L} $.
We address only the first 4 cases and the case $ (12,1) $, the remaining ones follow mutatis mutandis.
\\
\textbf{Case  (1,1):} $ H_{1,1}=
\begin{tikzpicture}[
	scale=0.7,
	every node/.style={
		circle, 
		draw=black,        
		fill=white,        
		inner sep=1pt,
	},
	label distance=0.1mm     
	]
	\node[label=above:$q$] (A) at (0,0) {};
	\node[label=above:$r$]  (B) at (2,0) {};	
\end{tikzpicture} $, with $ q,r \in k^* $. Let $ (i, j) $ be such that $ D_{i,j}=H_{1,1} $. Then $ w=q, w^{ij}=r $ and $ w^{-i-j}=1 $.
This implies that  $  -i-j   \equiv 0  \pmod N $. In this case, $ \dim_\k (V(ti,j,0))=1  $. 
\\
\textbf{Case  (2,1):} $ H_{2,1}=
\begin{tikzpicture}[
	scale=0.7,
	every node/.style={
		circle, 
		draw=black,        
		fill=white,        
		inner sep=1pt,
	},
	label distance=0.1mm     
	]
	\node[label=above:$q$] (A) at (0,0) {};
	\node[label=above:$q$]  (B) at (2,0) {};
	\draw (A) --node[fill=none,draw=none,above,yshift=-5pt]{$ q^{-1} $} (B) ;	
\end{tikzpicture} $, with $ q \in k^* \textbackslash \left\{1\right\} $. Let $ (i, j) $ be such that $ D_{i,j}=H_{2,1} $. Then $ w=q, w^{ij}=q $ and $ w^{-i-j}=q^{-1} $.
This implies that $ ij   \equiv 1  \pmod N $ and $  -i-j   \equiv -1  \pmod N $. In this case, $ \dim_\k (V(ti,j,0))=2  $. 
\\
\textbf{Case  (3,1):} $ H_{3,1}=
\begin{tikzpicture}[
	scale=0.7,
	every node/.style={
		circle, 
		draw=black,        
		fill=white,        
		inner sep=1pt,
	},
	label distance=0.1mm     
	]
	\node[label=above:$q$] (A) at (0,0) {};
	\node[label=above:$-1$]  (B) at (2,0) {};
	\draw (A) --node[fill=none,draw=none,above,yshift=-5pt]{$ q^{-1} $} (B) ;	
\end{tikzpicture} $, with $ q \in k^* \textbackslash \left\{-1,1\right\} $. Let $ (i, j) $ be such that $ D_{i,j}=H_{3,1} $. Then $ w=q, w^{ij}=-1 $ and $ w^{-i-j}=q^{-1} $.
This implies that $ N=2k $ with $ k\geq 2 $, $ ij   \equiv k  \pmod N $ and $  -i-j   \equiv -1  \pmod N $. In this case, $ \dim_\k (V(ti,j,0))=2  $. 
\\
\textbf{Case  (4,1):} $ H_{4,1}=
\begin{tikzpicture}[
	scale=0.7,
	every node/.style={
		circle, 
		draw=black,        
		fill=white,        
		inner sep=1pt,
	},
	label distance=0.1mm     
	]
	\node[label=above:$q$] (A) at (0,0) {};
	\node[label=above:$q^2$]  (B) at (2,0) {};
	\draw (A) --node[fill=none,draw=none,above,yshift=-5pt]{$ q^{-2} $} (B) ;	
\end{tikzpicture} $, with $ q \in k^* \textbackslash \left\{-1,1\right\} $. Let $ (i, j) $ be such that $ D_{i,j}=H_{4,1} $. The first case is $ w=q, w^{ij}=q^2 $ and $ w^{-i-j}=q^{-2} $.
This implies that $ N\geq 3 $, $ ij   \equiv 2  \pmod N $ and $  -i-j   \equiv -2  \pmod N $. In this case, $ \dim_\k (V(ti,j,0))=3  $.  
 
Another case is $ w=q^2, w^{ij}=q $ and $ w^{-i-j}=q^{-2} $.
This implies that $ N=2k+1 $ with $ k\geq 1 $, $ ij   \equiv k+1  \pmod N $ and $  -i-j   \equiv -1  \pmod N $. In this case, $ \dim_\k (V(ti,j,0))=2  $.   
\\
\textbf{Case  (12,1):} $ H_{12,1}=
\begin{tikzpicture}[
	scale=0.7,
	every node/.style={
		circle, 
		draw=black,        
		fill=white,        
		inner sep=1pt,
	},
	label distance=0.1mm     
	]
	\node[label=above:$\zeta^2$] (A) at (0,0) {};
	\node[label=above:$\zeta^{-1}$]  (B) at (2,0) {};
	\draw (A) --node[fill=none,draw=none,above,yshift=-1pt]{$ \zeta $} (B) ;	
\end{tikzpicture} $, with $ \zeta \in R_8 $. Let $ (i, j) $ be such that $ D_{i,j}=H_{12,1} $. Then $ w=\zeta^{-1}, w^{ij}=\zeta^2 $ and $ w^{-i-j}=\zeta $.
This implies that $ N=8 $, $ ij   \equiv -2  \pmod 8 $ and $  -i-j   \equiv -1  \pmod 8 $. Consequently we have $ i\equiv 2,7  \pmod 8 $. In this case, $ \dim_\k (V(ti,j,0))=2  $.

\begin{center}
	
\title{TABLE 1. $ \dim_\k (V(ti,j,0))=1 $}

\scalebox{1.0}{
\begin{tabular}{|>{\centering\arraybackslash}m{3.7cm}|>{\centering\arraybackslash}m{5.8cm}|>{\centering\arraybackslash}m{2.8cm}|}
\hline   
Conditions on $ N $  & Conditions on  $ i $  & $(k,l):D_{i,j}=H_{k,l}$ \\
\hline 
$N\geq 2 $ & $ i^2 \not\equiv 0 \pmod N  $ & $ (1,1) $  \\
\hline
\end{tabular}}

\title{TABLE 2. $ \dim_\k (V(ti,j,0))=2 $}
\scalebox{1.0}{
\begin{tabular}{|>{\centering\arraybackslash}m{3.7cm}|>{\centering\arraybackslash}m{5.8cm}|>{\centering\arraybackslash}m{2.8cm}|}
\hline   
Conditions on $ N $  & Conditions on  $ i $  & $(k,l):D_{i,j}=H_{k,l}$ \\
\hline 
$N\geq 2 $ & $  i(1-i) \equiv 1  \pmod N $ & $ (2,1) $ \\
\hline
$N=2k $ with $ k\geq 2 $    & $  i(1-i) \equiv k  \pmod N $ & $ (3,1) $ \\
\hline
$N=2k+1 $ with $ k\geq 1 $   & $  i(1-i) \equiv k+1  \pmod N $ & $ (4,1) $ \\
\hline
$N=3k $ with $ k\geq 2 $   & $  i(1-i) \equiv k , 2k  \pmod N $ & $ (6,1),(6,2) $ \\
\hline
$N=3k+1 $  with $ k\geq 1 $   &  $  i(1-i) \equiv 2k+1   \pmod N $   & $ (11,1) $ \\
\hline
$N=3k+2 $  with $ k\geq 1 $   &  $  i(1-i) \equiv k+1   \pmod N $   & $ (11,1) $ \\
\hline
$N=8 $     &  $  i \equiv 2 , 7   \pmod 8 $   & $ (12,1) $ \\
\hline
$N=24 $     &  $  i \equiv 3,6,19, 22   \pmod {24} $   & $ (13,2) $ \\
\hline
$N=30 $     &  $  i \equiv 4, 7, 9, 12, 19, 22, 24, 27   \pmod {30} $   & $ (16,2) $ \\
\hline
\end{tabular}}

\title{TABLE 3. $ \dim_\k (V(ti,j,0))=3 $}
\scalebox{1.0}{
\begin{tabular}{|>{\centering\arraybackslash}m{3.7cm}|>{\centering\arraybackslash}m{5.8cm}|>{\centering\arraybackslash}m{2.8cm}|}
\hline   
Conditions on $ N $  & Conditions on  $ i $  & $(k,l):D_{i,j}=H_{k,l}$ \\
\hline 
$N\geq 3 $ & $  i(2-i) \equiv 2  \pmod N $ & $ (4,1) $ \\
\hline
$N=2k $ with $ k\geq 3 $  & $  i(2-i) \equiv k  \pmod N $ & $ (5,1),(5,2) $ \\
\hline
$N=18 $     &  $  i \equiv 6,14   \pmod {18} $   & $ (10,1) $ \\
\hline
\end{tabular}}

\title{TABLE 4. $ \dim_\k (V(ti,j,0))=4 $}
\scalebox{1.0}{
\begin{tabular}{|>{\centering\arraybackslash}m{3.7cm}|>{\centering\arraybackslash}m{5.8cm}|>{\centering\arraybackslash}m{2.8cm}|}
\hline   
Conditions on $ N $  & Conditions on  $ i $  & $(k,l):D_{i,j}=H_{k,l}$ \\
\hline 
$N=12 $ & $  i \equiv 6,9  \pmod {12}$ & $ (9,3) $ \\
\hline
$N\geq 4 $   & $  i(3-i) \equiv 3  \pmod N $ & $ (11,1) $ \\
\hline
$N=8 $    & $  i \equiv 4,7  \pmod 8 $  & $ (12,3) $ \\
\hline
$N=20 $     &  $  i \equiv 5,10,13,18   \pmod {20} $   & $ (15,1),(15,2) $ \\
\hline
$N=30 $     &  $  i \equiv 5,8,10,13,20,23,25,28  \pmod {30} $  & $ (16,1) $ \\
\hline
\end{tabular}}

\title{TABLE 5. $ \dim_\k (V(ti,j,0))=5 $}
\scalebox{1.0}{
\begin{tabular}{|>{\centering\arraybackslash}m{3.7cm}|>{\centering\arraybackslash}m{5.8cm}|>{\centering\arraybackslash}m{2.8cm}|}
\hline   
Conditions on $ N $  & Conditions on  $ i $  & $(k,l):D_{i,j}=H_{k,l}$ \\
\hline 
$N=18 $ & $  i \equiv 9,13  \pmod {18}$ & $ (10,3) $ \\
\hline
$N=10 $   & $  i \equiv 5,9  \pmod {10} $ & $ (14,2) $ \\
\hline
	
\end{tabular}}

\title{TABLE 6. $ \dim_\k (V(ti,j,0))=6 $}
\scalebox{1.0}{
\begin{tabular}{|>{\centering\arraybackslash}m{3.7cm}|>{\centering\arraybackslash}m{5.8cm}|>{\centering\arraybackslash}m{2.8cm}|}
\hline   
Conditions on $ N $  & Conditions on  $ i $  & $(k,l):D_{i,j}=H_{k,l}$ \\
\hline
$N=24 $ & $  i \equiv 9,12,17,20  \pmod {24}$ & $ (13,4) $ \\
\hline
\end{tabular}}
\end{center}
\end{proof}

\subsection{The case $ N\geq 2 $ and $\lambda\neq 0 $}

We now turn to the final case: $ N\geq 2 $ and $\lambda\neq 0 $. Under these conditions, we show that all Nichols algebras of the form  $ \B(V(ti,j,\lambda)  $ are infinite-dimensional.

Fix $ i,j\in \Z $ and $ 0\neq \lambda \in \k $, and let $ V = V(ti,j,\lambda) $. By Proposition \ref{prop:isocondition}, we may assume without loss of generality that $ 0\leq i \leq n-1 $. Let $\{v_0, v_1, \dots, v_{n-1}\}$ be the basis of $V$ defined in~\eqref{basisofV}. For any $ 0\leq k \leq n-1 $, define the one-dimensional subspace
$ V_k=\span \left\{v_k\right\} $ and the projection
\[
p_k : V \to V_k, \quad \sum_{l=0}^{n-1} a_l v_l \mapsto a_k v_k.
\]
For any  $ u_1, \dots, u_m \in V $ , we write  $ u_1 \dotsm u_m $  to denote the tensor product  $ u_1 \otimes \cdots \otimes u_m $; that is, we suppress the tensor symbol for brevity.
We now prove that for all  $ m \geq 1 $,
\begin{equation}
	\Delta_{1^m}^{T(V)}(v_i^m) \neq 0,
\end{equation}
which implies that  $ \B(V) $  is infinite-dimensional.

For all $ m\geq 1 $, define $ \psi_m =p_i^{\otimes m}\circ  \Delta_{1^m} $. It suffices to show that 
\begin{equation}\label{psinot0}
	\psi_m(v_i^m) \neq 0.
\end{equation}
Note that $\psi_m(v_i^m) = a_m  v_i \otimes \cdots \otimes v_i$ for some scalar $a_m \in \Bbbk$.
Essentially, we aim to prove that the coefficient $a_m$ of the term $v_i \otimes \cdots \otimes v_i$ in the expansion of $\Delta_{1^m}^{T(V)}(v_i^m)$ is nonzero. 
To this end, we now perform a detailed computation of  $\Delta_{1^m}^{T(V)}(v_i^m)$.

For every $ m\geq 2 $, the following identity holds:
\begin{equation}\label{formuladelta}
	\Delta_{1^m}^{T(V)}(v_i^m)=(\pi_1 \otimes\Delta_{1^{m-1}}^{T(V)} )\circ \Delta (v_i^m).
\end{equation}
Since $\Delta : T(V) \to T(V) \underline{\otimes} T(V)$ is an algebra homomorphism,
\begin{equation*}
	\Delta (v_i^m) = (1\otimes v_i + v_i \otimes 1)^m = \sum x_1 x_2 \cdots x_m,
\end{equation*}
where the sum is taken over all sequences $(x_1, \dots, x_m)$ such that each $x_k$ is either $1 \otimes v_i$ or $v_i \otimes 1$. In particular, if exactly $l$ of the $x_k$ equal $v_i \otimes 1$ and the remaining $m-l$ equal $1 \otimes v_i$, then the product $x_1 x_2 \cdots x_m$ lies in $V^{\otimes l} \otimes V^{\otimes (m-l)}$.

By \eqref{formuladelta}, we need only compute the terms that lie in $V \otimes V^{\otimes (m-1)}$. That is, among all such products $x_1 x_2 \cdots x_m$, we restrict to those in which exactly one factor equals $v_i \otimes 1$. Therefore,
\begin{equation}\label{formuladelta11}
	\Delta_{1^m}^{T(V)}(v_i^m)=(\pi_1 \otimes\Delta_{1^{m-1}}^{T(V)} ) \left(  \sum_{k=1}^{m} (1\otimes v_i^{k-1})(v_i \otimes v_i^{m-k}) \right).
\end{equation}

We now compute the term $(1\otimes v_i^{k-1})(v_i \otimes v_i^{m-k})$. When $k = 1$, it reduces to $v_i \otimes v_i^{m-1}$. For $k > 1$, however, the computation becomes more involved. Specifically,
\begin{equation*}
	(1\otimes v_i^{k-1})(v_i \otimes v_i^{m-k})=(v_i^{k-1})_{-1} \cdot v_i \otimes (v_i^{k-1})_{0}v_i^{m-k}=\sum v_{j_1}\otimes v_{j_2}\cdots v_{j_m},
\end{equation*}
where $0 \leq j_l \leq n-1$ for all $1 \leq l \leq m$. As $k$ increases, so does the number of terms in this expansion. From the action and coaction structure on $V$, one can verify that for each term $v_{j_1} \otimes v_{j_2} \cdots v_{j_m}$, the following inequality holds:
\begin{equation*}
	j_1 + \cdots + j_m \leq m i.
\end{equation*}
In fact, we are able to prove the following generalization.
\begin{lemma}\label{lem:zhibiaobuzeng}
For any $m \geq 2$, suppose
\[
	\Delta_{1^m}^{T(V)}(v_{i_1}\otimes \cdots \otimes v_{i_m})=\sum v_{j_1}\otimes \cdots \otimes v_{j_m},
\]
where $0 \leq i_l,j_l \leq n-1$ for all $1 \leq l \leq m$. Then, for each term $v_{j_1} \otimes \cdots \otimes v_{j_m}$ in the sum, the following inequality holds:
\[
	j_1 + \cdots + j_m \leq i_1 + \cdots + i_m  .
\]
\end{lemma}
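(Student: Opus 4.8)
The plan is to reduce the whole statement to the behaviour of a \emph{single} application of the braiding. By Remark~\ref{rmk:nicholalgebra}(1), the map $\Delta_{1^m}^{T(V)}$ agrees on $V^{\otimes m}$ with the braided symmetrizer $S_m$ of the braided vector space $(V,c_{V,V})$, and $S_m$ is a sum of operators of the form $c_{k_1}\circ\cdots\circ c_{k_r}$, where $c_k$ denotes $\mathrm{id}^{\otimes(k-1)}\otimes c_{V,V}\otimes\mathrm{id}^{\otimes(m-k-1)}$ acting on the $k$-th and $(k+1)$-th tensor factors (the standard description of $S_m$ via the Matsumoto section, see \cite[\S1.8--1.9]{HS20}). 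Hence it suffices to show that for any such composition $\Phi$, the expansion of $\Phi(v_{i_1}\otimes\cdots\otimes v_{i_m})$ in the standard basis of $V^{\otimes m}$ involves only basis tensors $v_{j_1}\otimes\cdots\otimes v_{j_m}$ with $j_1+\cdots+j_m\le i_1+\cdots+i_m$; a finite sum of such operators then inherits the property.

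The first step is to record the effect of $c_{V,V}$ on basis vectors. Using $\delta(v_a)=\sum_{l=0}^{a}\lambda_j^{ti}(a,l)\,x^{a-l}g^{ti-at}\otimes v_l$ from Proposition~\ref{pro:cijformula}, together with $g\cdot v_b=\xi^{j-b}v_b$ and $x^{s}\cdot v_b=\lambda^{\lfloor(b+s)/n\rfloor}v_{(b+s)\bmod n}$ for $s\ge 0$, one computes
\begin{equation*}
	c_{V,V}(v_a\otimes v_b)=\sum_{l=0}^{a}\lambda_j^{ti}(a,l)\,\xi^{(ti-at)(j-b)}\,\lambda^{q_l}\,v_{r_l}\otimes v_l ,
\end{equation*}
where $q_l\ge 0$ and $r_l\in\{0,\dots,n-1\}$ are determined by $b+(a-l)=q_l n+r_l$. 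Since $q_l\ge 0$ we get $r_l\le b+(a-l)$, hence $r_l+l\le a+b$ for every $l$. Thus $c_{V,V}(v_a\otimes v_b)$ is a linear combination of basis tensors whose index sum is at most $a+b$: the point is that reducing $b+(a-l)$ modulo $n$ in the $x$-action can only lower the index, never raise it.

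It remains to propagate this estimate through a composition. An operator $c_k$ modifies only the $k$-th and $(k+1)$-th slots of a basis tensor, replacing $v_{i_k}\otimes v_{i_{k+1}}$ there by a combination of $v_{a'}\otimes v_{b'}$ with $a'+b'\le i_k+i_{k+1}$ and leaving the other entries fixed; hence $c_k$ sends a basis tensor to a combination of basis tensors of no larger total index sum. Applying $c_{k_r},c_{k_{r-1}},\dots,c_{k_1}$ in turn and invoking this estimate on each intermediate basis tensor shows that $c_{k_1}\circ\cdots\circ c_{k_r}(v_{i_1}\otimes\cdots\otimes v_{i_m})$ is a combination of basis tensors of index sum $\le i_1+\cdots+i_m$. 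Summing over the terms $T_\sigma$ constituting $S_m=\Delta_{1^m}^{T(V)}$ gives the claim.

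I do not expect a genuine obstacle: the statement is essentially a bookkeeping property of the braiding. The two points needing a little care are the reduction to elementary braidings — namely that $S_m$ is built from the $c_k$ by reduced words, so that controlling a single $c_{V,V}$ suffices — and the elementary but decisive observation that the wrap-around in the $x$-action is index non-increasing, which is exactly what makes the inequality hold uniformly, independently of $\lambda$. (Alternatively, one can run the induction directly through the recursion $\Delta_{1^m}^{T(V)}=(\pi_1\otimes\Delta_{1^{m-1}}^{T(V)})\circ\Delta$ of~\eqref{formuladelta}, tracking the same inequality through the braided-tensor-product multiplication on $T(V)$.)
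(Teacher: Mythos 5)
Your proof is correct, and it supplies an argument that the paper itself does not spell out: Lemma \ref{lem:zhibiaobuzeng} is stated there without proof, preceded only by an informal ``one can verify'' remark for the special case $i_1=\dots=i_m=i$. The two decisive points are both handled properly. First, the single-braiding estimate: from $\delta(v_a)=\sum_{l\le a}\lambda_j^{ti}(a,l)\,x^{a-l}g^{ti-at}\otimes v_l$ and the fact that $x^{s}\cdot v_b$ is a scalar multiple of $v_{r}$ with $b+s=qn+r$, $q\ge 0$, one gets $r+l\le (b+a-l)+l=a+b$, so $c_{V,V}$ never increases the total index of a basis tensor --- and, as you emphasize, this holds uniformly in $\lambda$ because the wrap-around only subtracts multiples of $n$. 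Second, the reduction of $\Delta_{1^m}^{T(V)}$ to the braided symmetrizer $S_m$ (justified by Remark \ref{rmk:nicholalgebra}(1)) and then to compositions of the elementary braidings $c_k$ via reduced words is legitimate, and the monotonicity of the index sum is manifestly stable under composition and summation (no cancellation issue arises, since the claim is only about which basis tensors can appear). The route differs in emphasis from what the paper's surrounding text suggests: the authors work through the recursion $\Delta_{1^m}^{T(V)}=(\pi_1\otimes\Delta_{1^{m-1}}^{T(V)})\circ\Delta$ and track the coaction/action through the braided multiplication on $T(V)\underline{\otimes}T(V)$, which is what your closing parenthetical sketches, whereas your main argument isolates the estimate at the level of one application of $c_{V,V}$ and lets the Matsumoto decomposition of $S_m$ do the bookkeeping. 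Your version is arguably cleaner and more robust, since it proves the inequality for every word in the $c_k$, not just for the particular terms produced by the recursion.
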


We now proceed to prove \eqref{psinot0}. The following derivation holds:
\begin{eqnarray*}
	& & \psi_m(v_i^m) \\
	&=&  (p_i^{\otimes m}\circ  \Delta_{1^m}) (v_i^m) \\
	&=& \left(p_i \otimes p_i^{\otimes^{m-1}}   \right) (\pi_1 \otimes\Delta_{1^{m-1}}^{T(V)} ) \left(  \sum_{k=1}^{m} (1\otimes v_i^{k-1})(v_i \otimes v_i^{m-k}) \right) \\
	&=& \sum_{k=1}^{m} \left(p_i  \pi_1 \otimes \psi_{m-1} \right) \left( (v_i^{k-1})_{-1} \cdot v_i \otimes (v_i^{k-1})_{0}v_i^{m-k}  \right).  
\end{eqnarray*}
For any $1 \leq k \leq m$, we claim that
\begin{equation*}
\left(p_i  \pi_1 \otimes \psi_{m-1} \right) \left( (v_i^{k-1})_{-1} \cdot v_i \otimes (v_i^{k-1})_{0}v_i^{m-k}  \right)=v_i\otimes  \psi_{m-1}(v_i^{m-1}) .
\end{equation*}
The case $k = 1$ is straightforward. For $k \geq 2$, we illustrate the argument for $k = 3$; the general case follows a similar but more intricate computation.  Note that $c_j^{ti}(i,i) = 1$, hence
\begin{eqnarray*}
	& & \left(p_i  \pi_1 \otimes \psi_{m-1} \right) \left( (v_i^{2})_{-1} \cdot v_i \otimes (v_i^{2})_{0}v_i^{m-3}  \right) \\
	&=&  \left(p_i  \pi_1 \otimes \psi_{m-1} \right) \left( \sum_{0\leq r,l \leq i} c_j^{ti}(i,r)c_j^{ti}(i,l) \cdot v_i \otimes v_rv_l	v_i^{m-3}
	\right) \\
	&=& v_i\otimes  \psi_{m-1}(v_i^{m-1}).
\end{eqnarray*}
The final equality holds because for $r < i$ or $l < i$, we have $\psi_{m-1}(v_r v_l v_i^{m-3}) = 0$ by Lemma \ref{lem:zhibiaobuzeng}. Therefore,
\begin{eqnarray*}
	& & \psi_m(v_i^m) \\
	&=&  \sum_{k=1}^{m} \left(p_i  \pi_1 \otimes \psi_{m-1} \right) \left( (v_i^{k-1})_{-1} \cdot v_i \otimes (v_i^{k-1})_{0}v_i^{m-k}  \right) \\
	&=& m \left(v_i\otimes  \psi_{m-1}(v_i^{m-1}) \right) \\
	&=& m (m-1) \left(v_i\otimes  v_i  \otimes \psi_{m-2}(v_i^{m-2}) \right) \\
	&=& m!( v_i \otimes \cdots \otimes v_i),
\end{eqnarray*}
which completes the proof. We thus obtain the following result.
\begin{lemma}\label{lem:finiten-nichols-case3}
If $1\leq t \leq n-1$, then the Nichols algebra $\B(V(ti,j,\lambda))$ is infinite-dimensional for all $i, j \in \mathbb{Z}$ and $0\neq\lambda \in \Bbbk$.
\end{lemma}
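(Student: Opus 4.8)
The plan is to prove that $\Delta_{1^m}^{T(V)}(v_i^m) \neq 0$ for every $m \geq 1$, where $V = V(ti,j,\lambda)$; by Proposition~\ref{prop:isocondition} we may assume $0 \leq i \leq n-1$, and we write $v_0, \dots, v_{n-1}$ for the basis of $V$ from \eqref{basisofV}. Since $\B^m(V) = V^{\otimes m}/\ker(\Delta_{1^m}^{T(V)})$, non-vanishing of $\Delta_{1^m}^{T(V)}$ for all $m$ forces $\B^m(V) \neq 0$ for all $m$, whence $\dim_\k \B(V) = \infty$. Following the set-up preceding the statement, I would detect non-vanishing via the projections $p_k$ onto $\span\{v_k\}$: setting $\psi_m = p_i^{\otimes m} \circ \Delta_{1^m}^{T(V)}$, it suffices to show $\psi_m(v_i^m) \neq 0$, and in fact I aim for the sharper identity $\psi_m(v_i^m) = m!\,(v_i \otimes \cdots \otimes v_i)$, which is nonzero because $\mathrm{char}\,\k = 0$.

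The first step is to establish the index-monotonicity Lemma~\ref{lem:zhibiaobuzeng}: in $\Delta_{1^m}^{T(V)}(v_{i_1}\otimes\cdots\otimes v_{i_m})$ every basis term $v_{j_1}\otimes\cdots\otimes v_{j_m}$ satisfies $j_1 + \cdots + j_m \leq i_1 + \cdots + i_m$. I would prove this by induction on $m$ via the recursion $\Delta_{1^m}^{T(V)} = (\pi_1 \otimes \Delta_{1^{m-1}}^{T(V)}) \circ \Delta$: expanding $\Delta(v_{i_1}\otimes\cdots\otimes v_{i_m})$ in the braided tensor algebra and keeping only the bidegree $(1,m-1)$ part gives a sum, over $k = 1,\dots,m$, of terms $(v_{i_1}\cdots v_{i_{k-1}})_{-1}\cdot v_{i_k} \otimes \Delta_{1^{m-1}}^{T(V)}\bigl((v_{i_1}\cdots v_{i_{k-1}})_{0}\,v_{i_{k+1}}\cdots v_{i_m}\bigr)$. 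The coaction drops the indices $i_1,\dots,i_{k-1}$, since $\delta(v_r) = \sum_{l \leq r} c_j^{ti}(r,l)\otimes v_l$; but by Proposition~\ref{pro:cijformula} each $c_j^{ti}(r,l)$ carries the monomial $x^{r-l}$, which upon acting on $v_{i_k}$ raises that index by exactly $r-l$ — unless the relation $x\cdot v_{n-1} = \lambda v_0$ wraps the index around, which only decreases it. Hence the combined index of the first slot and the $k-1$ indices carried into the comodule factor is $\leq i_1 + \cdots + i_k$, and applying the inductive hypothesis to $\Delta_{1^{m-1}}^{T(V)}$ on the remaining slots closes the induction.

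Granting Lemma~\ref{lem:zhibiaobuzeng}, I would then unwind the recursion for $\psi_m(v_i^m)$ from \eqref{formuladelta11}:
\[
\psi_m(v_i^m) = \sum_{k=1}^{m} (p_i\pi_1 \otimes \psi_{m-1})\bigl((v_i^{k-1})_{-1}\cdot v_i \otimes (v_i^{k-1})_{0}\,v_i^{m-k}\bigr),
\]
and the crux is to show every summand equals $v_i \otimes \psi_{m-1}(v_i^{m-1})$. Expanding $\delta(v_i^{\otimes(k-1)}) = \sum c_j^{ti}(i,r_1)\cdots c_j^{ti}(i,r_{k-1}) \otimes v_{r_1}\cdots v_{r_{k-1}}$ with each $r_s \leq i$, the comodule-side tensor $v_{r_1}\cdots v_{r_{k-1}}\,v_i^{m-k}$ has index sum $\sum_s r_s + (m-k)i \leq (m-1)i$, with equality precisely when all $r_s = i$; by Lemma~\ref{lem:zhibiaobuzeng} a term with some $r_s < i$ is annihilated by $\psi_{m-1}$, since the only basis vector surviving $p_i^{\otimes(m-1)}$, namely $v_i \otimes \cdots \otimes v_i$, has index sum $(m-1)i$. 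The surviving term has all $r_s = i$, and here $c_j^{ti}(i,i) = 1$ by Proposition~\ref{pro:cijformula} (as $ti - it = 0$), so $(v_i^{k-1})_{-1}\cdot v_i = v_i$ and $p_i\pi_1$ returns $v_i$; summing over $k$ yields $\psi_m(v_i^m) = m\,(v_i \otimes \psi_{m-1}(v_i^{m-1}))$. Iterating down to $\psi_1(v_i) = v_i$ gives $\psi_m(v_i^m) = m!\,(v_i \otimes \cdots \otimes v_i) \neq 0$, which is \eqref{psinot0}, and the lemma follows.

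I expect the principal obstacle to be the combinatorial bookkeeping in Lemma~\ref{lem:zhibiaobuzeng}: one must verify that the $x$-power carried by $c_j^{ti}(r,l)$ exactly offsets the index drop produced by $\delta$, so that no net increase of the total index can slip through after several nested applications of the recursion, and one must write out the general-$k$ case of the vanishing claim with care (the discussion before the statement spells out only $k = 3$). Once those points are settled, the rest is the routine computation above, and $\dim_\k \B(V) = \infty$ follows from the defining properties of Nichols algebras recalled in Section~\ref{section2}.
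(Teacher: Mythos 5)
Your proposal is correct and follows essentially the same route as the paper: the paper's argument is exactly the computation of $\psi_m(v_i^m)=m!\,(v_i\otimes\cdots\otimes v_i)$ via the recursion \eqref{formuladelta11} and the index-monotonicity Lemma~\ref{lem:zhibiaobuzeng}, with the same use of $c_j^{ti}(i,i)=1$ and the vanishing of terms of lower index sum under $\psi_{m-1}$. You supply somewhat more detail than the paper on the induction behind Lemma~\ref{lem:zhibiaobuzeng} and on the general-$k$ vanishing claim (the paper only writes out $k=3$), but the method is identical.
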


We conclude this section with the following theorem. 

\begin{theorem}\label{thm:nicholsalgebra}
Let $ H=H(n,t,\xi) $. Then 
\begin{itemize}
	\item [(1)] If $ t=0 $, then $ \B(V) $ is infinite-dimensional for every simple Yetter-Drinfeld module $ V $ over $ H $.
	\item [(2)] If $ 1\leq t \leq n-1 $, then the simple Yetter-Drinfeld modules $ V $ over $ H $ for which $ \B(V) $ is finite-dimensional are precisely those listed in Table 1-6 following Lemma \ref{lem:finiten-nichols-case2}.
\end{itemize}
\end{theorem}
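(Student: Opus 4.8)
The plan is to deduce Theorem~\ref{thm:nicholsalgebra} by combining the classification of simple Yetter-Drinfeld modules over $H$ (Theorem~\ref{thm:YDmodule}) with the three case-analysis lemmas established in this section. The key preliminary observation is that for \emph{any} infinite-dimensional Yetter-Drinfeld module $V$ one has $V=\B^1(V)\subseteq \B(V)$, so $\dim_\k\B(V)=\infty$ automatically. This instantly disposes of every infinite-dimensional simple module $V(i,j)$ (with $i\in\mathcal{J}$, $j\in\Z$) of Theorem~\ref{thm:YDmodule}(2), in both cases of the theorem; in particular, it does not matter whether such modules exist (i.e.\ whether $\gcd(t,n)>1$). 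It therefore remains only to analyze the finite-dimensional simple modules, which by Theorem~\ref{thm:YDmodule}(1) are exactly the $V(ti,j,\lambda)$ with $i,j\in\Z$ and $\lambda\in\k$.

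For part~(1), I would note that $t=0$ forces $N=o(\xi^t)=1$, so every finite-dimensional simple module is of the form $V(ti,j,\lambda)=V(0,j,\lambda)$; Lemma~\ref{lem:finiten-nichols-case1} says precisely that each such Nichols algebra is infinite-dimensional. Combined with the observation above (the infinite-dimensional simple modules here do occur, since $\gcd(0,n)=n\geq 2$, but still have infinite-dimensional Nichols algebras), this yields part~(1). For part~(2) we have $1\leq t\leq n-1$, i.e.\ $N\geq 2$, and I would split the finite-dimensional simple modules $V(ti,j,\lambda)$ according to the value of $\lambda$: when $\lambda\neq 0$, Lemma~\ref{lem:finiten-nichols-case3} gives $\dim_\k\B(V(ti,j,\lambda))=\infty$; when $\lambda=0$, Lemma~\ref{lem:finiten-nichols-case2} characterizes finite-dimensionality of $\B(V(ti,j,0))$ entirely in terms of the triple $(N,i,j)$, the exhaustive list being recorded in Tables~1--6. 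Assembling these facts shows that the simple Yetter-Drinfeld modules over $H$ with finite-dimensional Nichols algebra are precisely the $V(ti,j,0)$ whose associated triple $(N,i,j)$ appears in those tables, which is the assertion of part~(2).

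Since every ingredient is already proved, this theorem is essentially a bookkeeping assembly and presents no genuine obstacle. The only points that need a little care are making the reduction to the finite-dimensional case fully explicit (using that $\B^1(V)=V$ and that Theorem~\ref{thm:YDmodule} exhausts all simple objects) and keeping in mind that $N=o(\xi^t)$ is a fixed invariant of $H$, so that reading off Tables~1--6 with this particular value of $N$ genuinely enumerates the relevant simple modules over $H$ and not over some unrelated Taft algebra.
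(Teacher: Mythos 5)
Your proposal is correct and follows essentially the same route as the paper, which simply assembles Lemmas \ref{lem:finiten-nichols-case1}, \ref{lem:finiten-nichols-case2}, and \ref{lem:finiten-nichols-case3} via the classification in Theorem \ref{thm:YDmodule}. Your explicit remark that any infinite-dimensional simple module $V(i,j)$ is excluded because $V=\B^1(V)\subseteq\B(V)$ is a small point the paper leaves implicit, and it is a worthwhile addition rather than a deviation.
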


\begin{proof}
This follows immediately from Lemmas~\ref{lem:finiten-nichols-case1}, \ref{lem:finiten-nichols-case2}, and \ref{lem:finiten-nichols-case3}.
\end{proof}



\bibliographystyle{plain}
\bibliography{Taftref.bib}

\end{document}